\documentclass[english,11pt,a4paper,leqno]{amsart}
\usepackage{amsmath,amstext, amsthm, amssymb}
\usepackage{appendix}

\usepackage{color}
\definecolor{liens}{rgb}{1,0,0}
\usepackage[colorlinks=true, linkcolor=blue, 
hyperfootnotes=true,citecolor=blue,urlcolor=black]{hyperref}

\usepackage{latexsym, amsthm, amsfonts, amsmath,amssymb,graphicx,xcolor}
\input xy
\xyoption{all}
\usepackage[latin1]{inputenc} 
\usepackage[T1]{fontenc} 
\usepackage[english]{babel} 

\newtheorem{theo}{Theorem}[section]

\newtheorem{lem}[theo]{Lemma}
\newtheorem{propo}[theo]{Proposition}
\newtheorem{coro}[theo]{Corollary}

\theoremstyle{definition}
\newtheorem{defi}[theo]{Definition}

\numberwithin{equation}{section}
\theoremstyle{remark}
\newtheorem{rem}[theo]{Remark}

%


\def\det{\mathrm{det}}

\def\Gal{\mathrm{Gal}}
\def\<{\langle}
\def\>{\rangle}
\def\cM{\mathcal{M}}

\def\Gl{\mathrm{GL}}
\def\GL{\mathrm{GL}}

\def\trd{\mathrm{trdeg}}
\def\tr{\partial}
\def\cL{\mathcal{L}}


\def\Z{\mathbb{Z}}
\def\C{\mathbb{C}}
\def\N{\mathbb{N}}
\def\Q{\mathbb{Q}}
\def\cL{\mathcal{L}}

\def\n{\eta}

\def\d{\delta}

\def\n'{\nu}
\def\d{\delta}

\def\bM{\textbf{M}}

\def\tK{\widetilde{K}}
\def\tC{\widetilde{C}}
\def\cM{\mathcal{M}}
\def\bQ{\mathbf{Q}}
\def\cQ{\mathcal{Q}}
\def\bS{\mathbf{S}}
\def\tG{\widetilde{G}}
\def\tcQ{\widetilde{\mathcal{Q}}}

\def\cN{\mathcal{N}}
\def\dim{\mathrm{dim}}
\def\bS{\textbf{S}}
\def\bQ{\textbf{Q}}
\def\bM{\textbf{M}}
\def\tL{\widetilde{L}}
\def\cD{\mathcal{D}}

\begin{document}

\sloppy

\title{Hypertranscendence and  linear difference equations}

\author{Boris Adamczewski}
\address{Univ Lyon, Universit\'e Claude Bernard Lyon 1, CNRS UMR 5208, Institut Camille Jordan, 43 blvd. du 11 novembre 1918, F-69622 Villeurbanne cedex, France}
\email{boris.adamczewski@math.cnrs.fr}
\author{Thomas Dreyfus}
\address{Institut de Recherche Math\'ematique Avanc\'ee, U.M.R. 7501 Universit\'e de Strasbourg et C.N.R.S. 7, rue Ren\'e Descartes 67084 Strasbourg, France}
\email{dreyfus@math.unistra.fr}
\author{Charlotte Hardouin}
\address{Institut de Mathématiques de Toulouse, Université Paul Sabatier, 118, route de Narbonne, 31062 Toulouse, France}
\email{hardouin@math.univ-toulouse.fr}
\keywords{}

\thanks{ This project has received funding from the European Research Council (ERC) under the European Union's Horizon 2020 research and innovation program under the Grant Agreement No 648132.}
\subjclass[2010]{39A06, 12H05}
\date{\today}

\begin{abstract} 
After H\"older proved his classical theorem about the Gamma function,  
there has been a whole bunch of results showing that solutions to linear difference equations tend to be 
hypertranscendental ({\it i.e.}, they cannot be solution to an algebraic differential equation).  
In this paper, we obtain the first complete results for solutions to  
general linear difference equations associated with the shift operator 
$x\mapsto x+h$ ($h\in\C^*$), the $q$-difference operator $x\mapsto qx$ ($q\in\C^*$ not a root of unity), and the Mahler operator $x\mapsto x^p$ ($p\geq 2$ integer). The only restriction is that we constrain our solutions to be expressed as (possibly ramified) Laurent series in the variable $x$ with complex coefficients (or in the variable $1/x$ in some special case associated with the shift operator). Our proof is based on the parametrized difference Galois theory initiated by Hardouin and Singer. 
We also deduce from our main result a general statement about algebraic independence of values of Mahler 
functions and their derivatives at algebraic points.
\end{abstract} 
\maketitle

\section{Introduction}

Number theorists have to face the following challenge. On the one hand, the fields of rational and algebraic numbers 
are too poor, while, on the other hand, the fields of real and complex numbers seem far too large (uncountable). 
An attempt to classify transcendental numbers leads 
to the introduction of the ring of periods 
$\mathcal P$ (see \cite{KZ}) and to the following classification: 
$$
\mathbb Q \subset \overline{\mathbb Q}\subset \mathcal P\subset  \C \,.
$$
The main features of $\mathcal P$ are that most of classical mathematical constants 
(\emph{e.g.}, $\pi$, $\log 2$, $\zeta(3)$, $\Gamma(1/3)$...)\footnote{However, there should be some exceptions. 
For instance, it is conjectured that  $e$ is not a period.} belong to it, 
and that $\mathcal P$ is a countable set whose elements 
contain only a finite amount of information and can be identified in an algorithmic way. 
After the pioneering works of Cauchy and Riemann, analytic and holomorphic functions 
figure prominently in mathematics. 
As already observed by Hilbert \cite{Hi}, their study gives rise to a similar difficulty: 
the fields of rational and algebraic functions are too limited, but the ring $\C[[x]]$ is huge. 
In order to overcome this deficiency, he suggested to classify transcendental functions according to whether they  
are \emph{holonomic} (\emph{i.e.}, satisfy a linear differential equation with polynomial coefficients), \emph{differentially algebraic} (\emph{i.e.}, satisfy an algebraic differential equation), or not. Hence, we obtain the following classification: 
\begin{equation}\label{eq:class}
\mbox{Rat} \subset \mbox{Alg} \subset \mbox{Hol} \subset \mbox{Dif} \subset \C[[x]]\, .
\end{equation}
Functions that are not differentially algebraic are said to be \emph{hypertranscendental}. 
Note that, if we restrict our attention to power series with algebraic coefficients, 
the rings Hol and Dif become countable, and, again, their elements  
contain only a finite amount of information and can be identified in an algorithmic way. 
These rings play, in the setting of holomorphic functions, a role similar to the one played by $\mathcal P$ 
 in the setting of complex numbers. 
The classification \eqref{eq:class} is also significant in enumerative combinatorics.   
To some extent, the nature of a generating series reflects the underlying structure of the objects it counts  
  (see the discussion in \cite{BM}). 
An appealing example, where all cases of \eqref{eq:class} appear at once, 
is given by the study of generating series associated 
with lattice walks (see, for instance, \cite{DHRS} and the references therein). 

Hilbert also observed that the class of differentially algebraic functions misses important functions 
 coming from number theory.  Here are few examples. The very first comes from a classical result of H\"older \cite{Ho} 
 stating that the gamma function $\Gamma(x)$ is hypertranscendental. Furthermore, 
 it follows from the identity 
 $$
 \zeta(x)=2(2\pi)^{x-1}\Gamma(1-x)\sin\left(\frac{\pi x}{2}\right)\zeta(1-x) 
 $$
that the Riemann zeta function is also hypertranscendental.   
In an other direction, Moore \cite{Mo} 
shown that $\mathfrak f_1(x)=\sum_{n\geq 0}x^{2^n}$ is hypertranscendental. 
A result reproved and extended later by Mahler \cite{Ma30} in connection with the so-called Mahler's 
method in transcendental number theory.  More recently, 
Hardouin and Singer \cite{HS08}  proved hypertranscendence for some $q$-hypergeometric series, such as 
$$
\mathfrak f_2(x):= 
\sum_{n=0}^\infty\frac{(1-a)^2(1-aq)^2\cdots(1-aq^{n-1})^2}{(1-q)^2(1-q^2)^2\cdots(1-q^{n})^2}x^n\, ,
$$
where $q\in\C^*$ is not a root of unity, $a\not\in q^{\mathbb Z}$ and $a^2\in q^{\mathbb Z}$.  
Most proofs about hypertranscendence turn out to be based on the fact that the function under consideration 
satisfies a functional equation of a different type (\emph{i.e.}, not differential).    
For instance, the three results we just mentioned are respectively based on the following linear difference 
equations: 
$$
\Gamma(x+1)=x\Gamma(x) \;, \; \mathfrak f_1(x^2)=\mathfrak f_1(x) - x\;, 
$$
and
$$
\mathfrak f_2(q^2x)-\frac{2ax-2}{a^2x-1}\mathfrak f_2(qx)+\frac{x-1}{a^2x-1}\mathfrak f_2(x)=0\,.
$$
Broadly speaking, satisfying both a linear difference equation and an algebraic differential equation would enforce 
too much symmetry for a transcendental function. 
In this paper, we follow this motto: 

\medskip

\noindent (A) \emph{ A solution to a linear difference equation should be either rational or hypertranscendental. }

\medskip

\noindent Of course, this must be taken with a pinch of salt. For instance, 
$\log(x)$, $\exp(x)$, and the Jacobi theta function $\theta_q(x)=\sum_{n\in\mathbb Z} q^{-n(n-1)/2}x^n$ are all 
differentially algebraic, but they also satisfy the following simple linear difference equations: 
$$
\log (x^p)=p\log x \,, \;\;\exp(x+1)=e\exp(x) \,,\;\; \theta_q(qx)=qx\theta_q(x) \,.
$$
Nevertheless, we obtain in this paper the first complete results about hypertranscendence of solutions to  
general linear difference equations associated with the shift operator 
$x\mapsto x+h$ ($h\in\C^*$), the $q$-difference operator $x\mapsto qx$ ($q\in\C^*$ not a root of unity), and the Mahler operator $x\mapsto x^p$ ($p\geq 2$ integer). The only restriction is that we constrain our solutions to be expressed as (possibly ramified) Laurent series in the variable $x$ (or in the variable $1/x$ in some special case associated with the shift operator). For a precise statement, we refer the reader to our main result, Theorem \ref{thm: main}, which  confirms guess (A) 
for these three operators.

\subsection{Statement of our main result}\label{sec:mainresult}

All along this paper, our framework consists in a tower of fields extensions 
$\C\subset K \subset F_0 \subset F$ 
with the following properties.  
\begin{itemize}
\item The field $K$ is equipped with an automorphism $\rho$ and a derivation 
$\tr$\footnote{A derivation $\tr$ on $K$ is a map from $K$ into itself satisfying $\tr(a+b)=\tr(a)+\tr(b)$ and $\tr(ab)=\tr(a)b+a\tr(b)$ for every $a,b\in K$.}. 
\item The automorphism $\rho$ extends to $F_0$ and $F$.
\item  The derivation $\tr$ extends to $F$, but $F_0$ is not necessarily closed under $\tr$.
\end{itemize}
Our aim is to prove that, if $f\in F_0$ is solution to a linear $\rho$-equation with coefficients in $K$, then 
either $f\in K$ or $f$ is hypertranscendental over $K$.   
Specifically, we consider the following four situations, which we refer to as Cases 
$\textbf{S}_{0}$, $\textbf{S}_{\infty}$, $\textbf{Q}$, and $\textbf{M}$, respectively.

\medskip

\noindent {\bf Case} $\textbf{S}_{0}$. In this case, we let $K=\C(x)$, $\tr= \frac{d}{dx}$, and
$\rho$ denote the automorphism of $K$ 
defined by  
$$
\rho(f(x))=f(x+h), \, \quad h \in \C^* . 
$$ 
The automorphism $\rho$ and the derivation $\tr$ naturally extend to  the  
of field $F:=\mathcal{M}er(\C)$ of meromorphic functions on $\C$.   
Let ${K_0:=\C(x,\exp (*x))}$ denote the field generated over $\C(x)$ by the functions $\exp (\ell x)$, $\ell\in \C$. 
In this setting, we let $F_0$ be any field extension of $\C(x)$
on which $\rho$ extends and satisfying the  following three conditions. 
 \begin{itemize}
 
\item[(i)] $F_0\subset  \mathcal{M}er(\C)$.

\item[(ii)] $\left\{f\in F_0 \mid \rho(f)=f\right\}=\C$.

\item[(iii)] $F_0\cap K_0=\C(x)$.
\end{itemize}

\begin{rem} For $h=1$, let us see that we can choose $F_{0}=\C(x,\Gamma(x))$, where $\Gamma (x)$ is the 
Gamma function. The functional equation $\Gamma(x+1)=x\Gamma(x)$ shows that $\rho$ extends to $F_0$ and it is 
well-known that
$\Gamma\in\mathcal{M}er(\C)$. Thus, (i) holds.  Now,  let $\beta\in F_0$ be such that $\rho(\beta)=\beta$. 
Writing $\beta$ as a rational function in $\Gamma$ with coefficients in $\C(x)$, and then using the functional equation  satisfied by $\Gamma$ 
and the fact that $\Gamma$ is transcendental over $\C(x)$, we can deduce that $\beta\in\C$. Hence (ii) holds. 
Finally, to prove (iii) it is sufficient to show that $\Gamma$ is transcendental over $K_{0}$. The latter property follows from classical asymptotics showing that, for every $\alpha (x)\in K_{0}$, we have 
$\lim_{x\to +\infty} |\Gamma (x)/\alpha (x)|=+\infty$.  
Thus, Case $\textbf{S}_{0}$ of Theorem \ref{thm: main} is a generalization of H\"older's theorem.
\end{rem}

\medskip

\noindent {\bf Case} $\textbf{S}_{\infty}$. In this case, we consider $K$, $\rho$, and $\tr$ 
as in Case $\textbf{S}_0$.  
We note that the automorphism $\rho$ and the derivation $\tr$ naturally extend to  the  
of field of Laurent series $\C((x^{-1}))$, and we set  
$F_0=F=\C((x^{-1}))$.

\medskip

\noindent  {\bf Case} $\textbf{Q}$.  In this case, we let $K=\displaystyle\bigcup_{j\geq 1} \C(x^{1/j})$ denote the field of ramified rational functions. We also use the notation $\C(x^{1/*})$ for this field. 
Given $q\in \C^{*}$ that is not a root of unity, we let $\rho$ denote the automorphism of $K$ defined by 
$$
{\rho(f(x))=f(qx)},\, \quad \tr= x\frac{d}{dx},
$$
and we let $F_0=F=\displaystyle\bigcup_{j\geq 1} \C((x^{1/j}))$ 
denote the field of Puiseux series. We also use the notation $\C((x^{1/*}))$ for this field.

\medskip

\noindent  {\bf Case} $\textbf{M}$.  In this case, we let $K=\C(x^{1/*})$ and, given a natural number $p\geq 2$, 
we let $\rho$ denote the automorphism of $K$ defined by 
$$
\rho(f(x))=f(x^p)\,,
$$ 
and we set $\tr= x\frac{d}{dx}$ and $F_0=F=\C((x^{1/*}))$.

\medskip

In all cases, $\tr$ is  a derivation on $K$ and $F$. An element $f$ in $F$ is 
said to be \emph{differentially algebraic} or $\tr$-algebraic over $K$ if there exists 
$n\in \N$ such that  the functions $f,\tr(f),\dots,\tr^{(n)}(f)$ are algebraically dependent over $K$.
 Otherwise, $f$ is said to be 
\emph{hypertranscendental}  over $K$. 
  Our main result reads as follow.

\begin{theo}\label{thm: main}
Let $K$, $F_0$, and $\rho$ be defined as in Cases $\textbf{S}_{0}$, $\textbf{S}_{\infty}$, \textbf{Q}, and \textbf{M}, 
and let $n$ be a positive integer. 
Let $f \in F_0$ be solution to the $\rho$-linear difference equation of order $n$ 
\begin{equation}\label{eq: difference}
 \rho^{n} (y) + a_{n-1}\rho^{n-1}(y)+\cdots + a_{1} \rho(y)+a_{0} y=0\,,
\end{equation}
where $a_{0},\ldots,a_{n-1}\in K$.   
Then either $f\in K$ or $f$ is hypertranscendental over $K$. 
\end{theo}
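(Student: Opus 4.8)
The plan is to prove the statement by contradiction using parametrized difference Galois theory. Suppose $f\in F_0\setminus K$ is a solution to \eqref{eq: difference} that is differentially algebraic over $K$; I aim to derive that in fact $f$ must already lie in $K$, contradicting the hypothesis. The first step is to reduce to the case of an irreducible equation: by factoring the $\rho$-operator $L=\rho^n+a_{n-1}\rho^{n-1}+\cdots+a_0$ over the (twisted) Ore ring of $\rho$-operators with coefficients in $K$, and tracking where $f$ sits in the resulting filtration of solution spaces, I would isolate a minimal-order monic operator annihilating $f$. The key idea is then to build the parametrized Picard--Vessiot extension of $K$ for this equation: a $(\rho,\tr)$-field $R$ over $K$ containing a full basis of solutions, on which both the automorphism $\rho$ and the derivation $\tr$ act compatibly. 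The parametrized difference Galois group $G$ is a linear differential algebraic group over the constants $\C$, and the central dictionary of Hardouin--Singer relates the differential-algebraic relations satisfied by the solutions to the differential-algebraic subgroup structure of $G$.

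The crux of the argument is the following: $f$ is differentially algebraic over $K$ precisely when the parametrized Galois group $G$ of $f$ (or rather of the minimal operator annihilating it, together with its $\tr$-translates) is a \emph{proper} differential algebraic subgroup of the group $\mathrm{GL}_1$ or the relevant ambient group, i.e. it satisfies some nontrivial $\tr$-polynomial relation. Concretely, in the first-order case $\rho(f)=af$ with $a\in K^*$, hypertranscendence of $f$ is governed by whether the logarithmic-derivative element $\tr(a)/a$ lies in the image of the operator $\rho-1$ acting on $K$ enlarged by derivatives --- this is the heart of the Hardouin--Singer criterion. So the plan is: first, handle the order-one (rank-one) situation, where the Galois group sits inside the multiplicative group and the obstruction to hypertranscendence becomes an explicit solvability condition for a telescoping/cohomological equation $\rho(g)-g=\tr(a)/a$ (and its higher $\tr$-iterates) over $K$. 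Second, I would run an induction on the order $n$, peeling off a rank-one quotient at each stage and applying the order-one analysis to the successive \emph{Riccati}-type factors, using the exactness of the associated sequences of solution spaces to propagate the dichotomy.

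The main obstacle --- and where the four cases $\textbf{S}_0,\textbf{S}_\infty,\textbf{Q},\textbf{M}$ genuinely differ --- is verifying the solvability condition for the telescoping equations $\rho(g)-g=b$ over the specific base field $K$, for the relevant right-hand sides $b$ built from the coefficients $a_i$ and their $\tr$-derivatives. This is essentially a question about the cokernel of $\rho-1$ on $K$ (a difference-analogue of a first de Rham cohomology group), and it must be analyzed by hand in each case via the explicit structure of $K$: partial fraction / residue arithmetic for the shift operator $x\mapsto x+h$ in Cases $\textbf{S}_0$ and $\textbf{S}_\infty$, the $q$-factorization of $q$-shift orbits and $q$-residues for Case $\textbf{Q}$, and the $p$-power ramification combinatorics of the Mahler operator $x\mapsto x^p$ for Case $\textbf{M}$. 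In each case I expect that the only way for $f$ to be differentially algebraic over $K$ \emph{without} being hypertranscendental is for the associated cohomology class to vanish, and a careful case analysis should force $f\in K$; the hypotheses (ii) and (iii) imposed on $F_0$ in Case $\textbf{S}_0$ are precisely what is needed to control the constants and prevent spurious $\exp(\ell x)$-type solutions from interfering with this descent.

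Where I expect the genuine difficulty to concentrate is controlling the \emph{higher-order} differential relations, not merely the first derivative: differential algebraicity of $f$ means some $\tr^{(n)}(f)$ is algebraic over $K(f,\tr f,\dots,\tr^{(n-1)}f)$, which translates into the differential Galois group being an \emph{algebraic} (finite-dimensional) rather than a Kolchin-dense subgroup. Unwinding this to the base-field level requires not just solving a single inhomogeneous $\rho$-equation but showing that an entire system of such equations, indexed by iterated derivatives, is simultaneously solvable --- and it is the interplay between the $\rho$-structure and the $\tr$-structure at each arithmetic prime/orbit that makes the residue bookkeeping delicate, especially in the ramified Mahler case $\textbf{M}$ where the orbits under $x\mapsto x^p$ are infinite and the ramification indices grow.
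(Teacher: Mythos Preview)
Your order-one analysis and the general framework of parametrized Galois theory are on the right track, and the paper's treatment of the $n=1$ case (Proposition~\ref{prop: order1}) is essentially the cohomological/telescoping argument you sketch. But your inductive step has a genuine gap. You propose to ``peel off a rank-one quotient at each stage'': this presupposes that the minimal operator annihilating $f$ always admits a nontrivial factorization over $K$ with a factor of order one. That is equivalent to the difference Galois group $G$ stabilizing a line (or a hyperplane) in $\C^n$, and it is simply false in general. If $G\subset\GL_n(\C)$ acts irreducibly on $\C^n$ with $n\geq 2$ --- for instance $G=\SL_n$ --- the operator has no nontrivial factor over $K$ whatsoever (Lemma~\ref{lem: reducible1}), and your induction stalls immediately. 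Nothing in your plan addresses an irreducible equation of order $\geq 2$.

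The paper handles that irreducible case by an entirely different mechanism that you do not mention. After iterating the system to make $G$ connected, one shows (Lemma~\ref{lem6}) that a connected irreducible $G$ is primitive; then by twisting away the determinant one reduces to $G\subset\SL_n$, whence \cite[Proposition~2.3]{SU93} forces $G$ to be \emph{semisimple}. The decisive input is a result of Arreche--Singer (Proposition~\ref{propo:AS}, resting on Cassidy's classification of differential algebraic subgroups of semisimple groups together with Theorem~\ref{lem: linlin}): when the classical Galois group is semisimple, the parametrized group over $\widetilde K$ coincides with it. Hence if one column of the fundamental matrix were $\delta$-algebraic, the $\delta$-dimension of the parametrized group would be zero, forcing $G$ to be finite, connected, and therefore trivial --- contradicting irreducibility for $n\geq 2$ (Proposition~\ref{prop: colonne}). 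Even in the reducible step the paper does not assume a rank-one factor exists a priori: it puts $A$ in block-upper-triangular form with a \emph{minimal} top-left block $A_1$, applies the inductive hypothesis to the bottom block, and then proves (the Claim in Section~\ref{sec:final}) that $n_1=1$ by invoking the irreducible-case machinery (Proposition~\ref{prop: colonne}) on the subsystem $\rho(Y)=A_1Y$. So the semisimple argument is not a side case you can bypass with Riccati factors --- it is the engine driving the whole induction.
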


On the road to Theorem \ref{thm: main}, there is already a whole bunch of partial results and 
a number of them are used in our proof. These results can roughly be divided into three different types.

\begin{itemize}
\item Those concerned with affine equations of order one, that is with equations of the form $\rho(y)=ay+b$. 
References include \cite{Ho,Mo,Ma30,Ni84,rande1992equations,ishizaki1998hypertranscendency,Ha08,HS08,NG12}.

\item Those concerned with equations whose difference Galois group is large (\emph{e.g.}, simple, semisimple, reductive). 
References include \cite{HS08,DHR18,DHR2,AS17,ArDR18}. 
In particular, such results have nice applications to equations of order $2$. 

\item  Those dealing with general equations, but reaching only nonholonomicity instead of hypertranscendence 
(see Theorem \ref{lem: linlin} and Remark \ref{rem:linlin} below).  
References include  \cite{ramis1992growth,bezivin1993solutions,bezivin1994classe,SS16}.
\end{itemize}

Among these results, we quote the following one, which  
may be considered as a first step towards a proof of Theorem \ref{thm: main} (see Remark \ref{rem:linlin} for precise attribution).

\begin{theo}\label{lem: linlin}
In each of Cases $\textbf{S}_{0}$, $\textbf{S}_{\infty}$, $\textbf{Q}$, and $\textbf{M}$, the following property holds.  
Any element of $F_0$ that satisfies both a $\rho$-linear  equation and a 
$\tr$-linear equation with coefficients in $K$ belongs to $K$. 
\end{theo}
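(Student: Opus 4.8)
The plan is to recast the two scalar equations as a single finite-dimensional $K$-vector space carrying compatible actions of $\rho$ and $\tr$, and then to strip this module down to an order-one situation governed by the classical results recalled in the introduction.

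\emph{Step 1 (a finite-dimensional bi-stable module).} Write the differential equation satisfied by $f$ as $\tr^m(f)=\sum_{j<m}b_j\tr^j(f)$ and the difference equation as $\rho^n(f)=\sum_{i<n}a_i\rho^i(f)$, with all $a_i,b_j\in K$. In Cases $\textbf{S}_0$, $\textbf{S}_\infty$ and $\textbf{Q}$ one checks directly that $\tr\rho=\rho\tr$, while in Case $\textbf{M}$ one has the twisted relation $\tr\rho=p\,\rho\tr$, whence $\tr^j\rho^i=p^{ij}\rho^i\tr^j$; in all cases the operators commute up to a nonzero scalar, which is all that the following bookkeeping requires. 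I would set $M:=\mathrm{span}_K\{\rho^i\tr^j(f):i,j\ge0\}$. Applying $\rho^i$ to the differential relation lets one reduce every $\tr$-exponent below $m$, and applying $\tr^j$ to the difference relation (via the Leibniz rule, with the harmless factors $p^{il}$ in Case $\textbf{M}$) lets one reduce every $\rho$-exponent below $n$; a double induction then gives $M=\mathrm{span}_K\{\rho^i\tr^j(f):i<n,\ j<m\}$. Thus $M$ is finite-dimensional over $K$, it contains $f$, and it is stable under both $\rho$ and $\tr$. Since $\rho$ is a semilinear bijection of $F$ it restricts to a bijection of $M$, so $(M,\rho,\tr)$ is a genuine difference-differential module over $K$; recall finally that in every case the field of $\tr$-invariants and the field of $\rho$-invariants of $F_0$ both reduce to $\C$.

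\emph{Step 2 (d\'evissage to order one).} I would next try to reduce to the situation where the difference module generated by $f$ has rank one, i.e. $\rho(f)=af$ for some $a\in K^\times$, while keeping $f$ holonomic. This is the step I expect to be the main obstacle. The natural move is to use the structure theory of difference modules to put $(M,\rho)$ in a composition series with rank-one quotients, but a difference-submodule of $M$ need not be stable under $\tr$, so one cannot simply factor the difference operator and read off the differential structure on the pieces; one must control the difference filtration and the (twisted) derivation simultaneously. This is exactly the delicate point addressed in the literature, either through a careful analysis of the combined module together with the triviality of the $\rho$-invariants, or, in the analytic spirit of \cite{ramis1992growth,bezivin1993solutions,bezivin1994classe,SS16}, through growth and coefficient estimates that exploit both recurrences at once and handle arbitrary order directly, without any reduction.

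\emph{Step 3 (the order-one engine).} Once reduced to $\rho(f)=af$ with $a\in K^\times$ and $f\neq0$ holonomic, set $g:=\tr(f)/f$. Dividing the differential equation by $f$ and expressing each $\tr^j(f)/f$ as a differential polynomial in $g$ shows that $g$ is differentially algebraic over $K$. Taking the logarithmic $\tr$-derivative of $\rho(f)=af$ and using the (possibly twisted) commutation yields an inhomogeneous order-one difference equation $\rho(g)-g=\tr(a)/a$ (respectively $p\,\rho(g)=g+\tr(a)/a$ in Case $\textbf{M}$), with right-hand side in $K$. A differentially algebraic solution of such an affine order-one equation is precisely the setting of the classical order-one hypertranscendence results quoted in the introduction (H\"older, Moore, Mahler and their refinements, \cite{Ho,Mo,Ma30,Ni84,rande1992equations,ishizaki1998hypertranscendency,Ha08,HS08,NG12}); combined with the triviality of the invariants of $F_0$ and the fact that $f$ is a genuine element of $F_0$, these force $g\in K$ and then $f\in K$, as desired.
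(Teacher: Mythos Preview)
The paper does not prove this theorem; it is quoted as a known input. Remark~\ref{rem:linlin} attributes Case~$\textbf{S}_0$ to B\'ezivin--Gramain, Case~$\textbf{S}_\infty$ to Sch\"afke--Singer, and (after a change of variable $z=x^{1/\ell}$) Cases~$\textbf{Q}$ and~$\textbf{M}$ to Ramis and to B\'ezivin respectively, with a uniform treatment in~\cite{SS16}. None of these proofs proceed by d\'evissage of the difference module to rank one; they work directly with the combined system via growth or coefficient estimates.

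Your Step~1 is correct and standard. Step~2, as you yourself say, is the main obstacle and is not carried out; it is in fact the entire content of the theorem, and nothing in the order-one references you list supplies it. More seriously, Step~3 is circular in the logical architecture of this paper. The general statement ``$g\in F_0$ satisfies $\rho(g)=cg+d$ and is differentially algebraic $\Rightarrow g\in K$'' is Proposition~\ref{prop: order1} here, and its proof in Cases~$\textbf{S}_0$ and~$\textbf{S}_\infty$ explicitly \emph{invokes} Theorem~\ref{lem: linlin}: after the Hardouin--Singer reduction one arrives at $\tr(\tilde f)/\tilde f\in\C$ and then appeals to the present theorem to conclude $\tilde f\in K$. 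So even granting your d\'evissage, you cannot call the order-one hypertranscendence criterion without assuming what you are proving. Two further wrinkles: after passing to $g=\tr(f)/f$ you only know that $g$ is differentially \emph{algebraic}, not holonomic, so you cannot feed $g$ back into the statement being proved either; and in Case~$\textbf{S}_0$ the field $F_0$ is not assumed $\tr$-stable, so $g$ need not lie in $F_0$ at all.
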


\begin{rem}\label{rem:linlin} Theorem \ref{lem: linlin} can be rephrased by saying that, if $f$ is as in 
Theorem \ref{thm: main}, then either $f\in K$ or $f$ is not holonomic.  
Note that recently the authors of \cite{SS16} give a uniform proof for all cases of Theorem \ref{lem: linlin}, 
see Corollaries 3 and 5 therein. 
In each case, we also mention the original reference. 
Case $\textbf{S}_{0}$ is due to B\'ezivin and Gramain \cite{bezivin1993solutions}.  Case $\textbf{S}_{\infty}$ is due to Sch\"afke and Singer 
\cite[Corollary~5]{SS16}. 
Furthermore, in Cases $\textbf{Q}$ and $\textbf{M}$, a change of variable 
of the form $z=x^{1/\ell}$, $\ell\in \N^*$, can be used to  reduce the situation to the case where $K=\C(x)$ and $F_{0}=\C((x))$. In the latter situation,    
Case $\textbf{Q}$ is due to Ramis 
\cite{ramis1992growth}, while Case $\textbf{M}$ is due to B\'ezivin \cite{bezivin1994classe}.
\end{rem}
\subsection{Strategy of proof } 

Elements of the rings Rat, Alg, and Hol are well-understood. We have recurrence formulas for their coefficients, 
precise asymptotics, and a good understanding of their finitely many singularities.  
In contrast, our knowledge about differentially algebraic functions is much more 
limited (see, for instance, the survey \cite{Ru}). In fact, such functions can behave quite wildly. For example, 
the function $\sum x^{n^2}$ is differentially algebraic and admits the unit circle as a natural boundary\footnote{Incidentally, this example shows that the "outrageous conjecture" suggested at the end of \cite{Ru} is false. 
Indeed, it follows from a theorem of Nesterenko that $\sum 2^{-n^2}$ is transcendental. }. 
Thus, proving hypertranscendence required more involved tools, and is substantially more difficult than proving irrationality, transcendence, or nonholonomicity. In this direction,  
Hardouin and Singer \cite{HS08} built a new Galois theory of difference equations, which is  
designed to measure the differential dependencies among solutions to linear difference equations.  
In particular, 
it applies to hypertranscendence. We refer the reader to \cite{DV} and \cite{Ha16} for an 
introduction to this topic.  
Our proof of Theorem \ref{thm: main} remains on this theory. 

Let us briefly describe our strategy.  
Since $f$ satisfies a linear difference equation, it is classical to associate with this equation 
its difference Galois group, which is a linear 
algebraic group that encodes the algebraic relations between the solutions to this equation. 
The more involved parametrized Galois theory developed in \cite{HS08} 
attaches to any linear difference equation, a geometric object, 
\emph{the parametrized Galois group}, whose structure encodes the algebraic relations satisfied by the solutions and their derivatives. 
This is not a linear algebraic group anymore, but a linear differential group (see Section \ref{subsec: PDGT}).  
Nevertheless, both groups are strongly related and, viewed on a suitable field extension, the latter is Zariski dense in the former (see 
Proposition \ref{prop: paradense}). 
In this framework, we can conclude that $f$ is hypertranscendental when the corresponding parametrized Galois group  
is \emph{large}. Furthermore, there are several results, culminating in \cite{AS17}, 
showing that if the classical difference Galois group is large, then both groups are the same when 
viewed on a suitable field extension.  We emphasize that such results follow a  strategy initiated in \cite{HS08} and developed further in \cite{DHR18}.  It combines  a fundamental result about classification  of 
 differential algebraic subgroups  of semisimple algebraic groups  by Cassidy  \cite{Ca72,Ca89}, parametrized Galois correspondence,  and Theorem \ref{lem: linlin}.

We prove Theorem \ref{thm: main} by induction on the order $n$ of the $\rho$-linear equation satisfied by 
$f$. We first prove the result for affine equation of order one. Then we show that, iterating the associated linear system 
if necessary, one can reduce the situation to the case where the associated difference Galois group $G$ is connected. 
Now, if $G$ acts irreducibly on $\C^n$, we show how to reduce the situation to the case where $G$ is semisimple. 
In that case, the difference Galois group is large enough and a recent result of Arreche and Singer \cite{AS17} allows 
us to conclude that $f$ is hypertranscendental.  
Finally, if $G$ acts reducibly on $\C^n$, we assume that $f$ is differentially algebraic and we show how to construct from $f$ a differentially algebraic function $g$  that satisfies a linear equation of smaller order. 
Furthermore, this construction ensures that $g\in K$ 
if and only if  $f\in K$. 
Applying the induction assumption to $g$, we deduce that $f\in K$, as wanted. Thus, once the case of order one 
equation is solved, the proof really makes a dichotomy between \emph{irreducible} and \emph{reducible} difference Galois groups.

\subsection{An application to number theory} 

At the end of the 1920s, Mahler \cite{Ma29,Ma30a,Ma30} introduced a new method for proving transcendence 
and algebraic independence of values at algebraic points of analytic functions satisfying different types of 
functional equations associated with the operator $x\mapsto x^p$. Mahler's method aims 
at transferring results about algebraic independence 
over $\overline{\mathbb Q}(x)$ of such functions  
 to the algebraic independence over $\overline{\mathbb Q}$ of their values at algebraic points. 
 A major result in this direction is Nishioka's theorem \cite{Ni}, which provides an analogue of the 
 Siegel-Shidlovskii theorem for linear Mahler systems.  
Let us also mention that significant progress in this theory has been made recently \cite{PPH,AF17,AF1,AF2}. 
Combining Case $\textbf{M}$ of Theorem \ref{thm: main} with Nishioka's theorem and following the line of argument 
 given in 
the proof of Proposition 4.1 in \cite{AF2}, we can deduce the following general result. 
We recall that $f(x)\in \overline{\mathbb Q}[[x]]$ is a $p$-\emph{Mahler function} 
if there exist  polynomials $a_0(x),\ldots,a_n(x)\in\overline{\mathbb Q}[x]$, not all zero, such that
$$
a_0(x)f(x)+a_1(x)f(x^p)+\cdots+a_n(x)f(x^{p^n})=0\,.
$$
We simply say that $f$ is a Mahler function if it is $p$-Mahler for some 
integer $p\geq 2$. 
According to Theorem \ref{thm: main}, a Mahler function is thus either rational or hypertranscendental. 
We recall that an irrational Mahler function is meromorphic on the open unit disc and 
admits the unit circle as a natural boundary. 

\begin{theo}\label{thm: app}
Let $f(x) \in \overline\Q[[z]]$ be a Mahler function that is not rational, let $r$ be a positive integer, 
and let $\mathcal K$ be a compact 
subset of the open unit disc. Then, for all but finitely many algebraic numbers 
$\alpha\in\mathcal K$, the complex numbers 
$f(\alpha),f'(\alpha),\ldots,f^{(r)}(\alpha)$ are algebraically independent over $\overline\Q$.  
\end{theo}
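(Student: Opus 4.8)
The plan is to combine the hypertranscendence statement of Case $\textbf{M}$ of Theorem~\ref{thm: main} with Nishioka's analogue of the Siegel--Shidlovskii theorem, following the line of argument of Proposition~4.1 in \cite{AF2}. The starting observation is that since $f$ is an irrational Mahler function, Theorem~\ref{thm: main} tells us that $f$ is hypertranscendental over $K=\C(x^{1/*})$, hence \emph{a fortiori} over $\overline{\mathbb Q}(x)$. This means that for the given $r$, the functions $f,\tr(f),\ldots,\tr^{(r)}(f)$ are algebraically independent over $\overline{\mathbb Q}(x)$, where $\tr=x\frac{d}{dx}$; equivalently, since passing between $\tr$ and $\frac{d}{dx}$ only multiplies by powers of $x$, the functions $f,f',\ldots,f^{(r)}$ are algebraically independent over $\overline{\mathbb Q}(x)$.

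First I would assemble a linear Mahler \emph{system} carrying all the derivatives. Starting from the scalar Mahler equation satisfied by $f$, one builds the companion system $\rho(Y)=AY$ with $A\in \mathrm{GL}_n(\overline{\mathbb Q}(x))$ whose solution vector has $f$ as a coordinate; then, by differentiating the relation $\rho(f)=f(x^p)$ repeatedly with respect to $x$ and using the chain rule, the derivatives $f',\ldots,f^{(r)}$ (or the $\tr^{(i)}(f)$) are themselves coordinates of the solution vector of a larger linear Mahler system $\rho(Z)=BZ$ with $B\in\mathrm{GL}_N(\overline{\mathbb Q}(x))$. The key input from the previous paragraph is that these coordinate functions $f,f',\ldots,f^{(r)}$ are algebraically independent over $\overline{\mathbb Q}(x)$, so the entries of a suitable sub-collection of the solution vector of this prolonged system are algebraically independent functions.

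Next I would apply Nishioka's theorem \cite{Ni} to this prolonged system. Nishioka's result states that for a linear Mahler system and a regular algebraic point $\alpha$ (with $0<|\alpha|<1$, avoiding the finitely many singularities of the system and the points where the iterates $\alpha^{p^k}$ misbehave), the transcendence degree over $\overline{\mathbb Q}$ of the values of the solution functions at $\alpha$ equals the transcendence degree over $\overline{\mathbb Q}(x)$ of those functions themselves. Applying this to the subfamily $f,f',\ldots,f^{(r)}$, whose functional transcendence degree is exactly $r+1$ by the hypertranscendence input, yields that the $r+1$ values $f(\alpha),f'(\alpha),\ldots,f^{(r)}(\alpha)$ have transcendence degree $r+1$ over $\overline{\mathbb Q}$, i.e. they are algebraically independent, for every admissible $\alpha$. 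Since $\mathcal K$ is a compact subset of the open unit disc, it contains only finitely many of the excluded algebraic points, giving the ``all but finitely many'' conclusion.

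The main obstacle is the careful bookkeeping in the prolongation step and the verification that Nishioka's hypotheses are met. One must check that the prolonged system $\rho(Z)=BZ$ is again a bona fide linear Mahler system over $\overline{\mathbb Q}(x)$ (closure of the derivative construction, invertibility of $B$), and, more delicately, that the set of algebraic points $\alpha\in\mathcal K$ one must exclude is genuinely finite: this requires controlling the poles of the entries of $B$ and the points whose forward orbit $\{\alpha^{p^k}\}_{k\ge 0}$ hits a singularity, all within the compact set $\mathcal K$. The remaining technical point is the comparison of transcendence degrees of $\overline{\mathbb Q}$-values and of functions, which is exactly the content of Nishioka's theorem, so no new transcendence machinery is needed beyond feeding it the correct system. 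This is precisely the route taken in \cite{AF2}, and I would follow it verbatim, with Case $\textbf{M}$ of Theorem~\ref{thm: main} supplying the functional algebraic independence that makes the argument go through.
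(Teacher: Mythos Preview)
Your proposal is correct and follows exactly the route the paper indicates: combine Case~$\mathbf{M}$ of Theorem~\ref{thm: main} (to obtain algebraic independence of $f,f',\ldots,f^{(r)}$ over $\overline{\mathbb Q}(x)$), prolong the Mahler equation into a linear Mahler system containing these derivatives, and apply Nishioka's theorem, with compactness of $\mathcal K$ ensuring that only finitely many bad algebraic points are excluded. The paper itself does not write out a proof beyond this sketch (it refers to the argument of \cite[Proposition~4.1]{AF2}), so your plan matches it essentially verbatim.
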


Results of this type have been first obtained by Mahler 
\cite{Ma30} for solutions to affine order one equations. 
The main feature of Theorem \ref{thm: app} is that it applies to \emph{any} irrational Mahler function. 
It is also almost best possible in the sense that one cannot avoid the possibility of finding infinitely many exceptions in the 
whole open unit disc. For instance, the transcendental $2$-Mahler function 
$$
f(z)=\prod_{n=0}^{\infty}(1-3z^{2^n})
$$
vanishes at all algebraic numbers $\alpha$ such that  $\alpha^{2^n}=1/3$ for some integer $n\geq 0$. 
This shows that, even for $r=1$, the exceptional set can depend on $\mathcal K$.   However, if $\mathcal K$ is fixed, we do not know whether 
the exceptional set always remains finite when $r$ tends to infinity.

\subsection{Organization of the paper} 
This article is organized as follows. In Section \ref{sec: galois}, we provide a short  
introduction to difference Galois theory and to parametrized difference Galois theory, following 
\cite{vdPS97,HS08}.  In Section~\ref{sec: paramframework}, we describe, for each of Cases   
$\textbf{S}_{0}$, $\textbf{S}_{\infty}$, $\textbf{Q}$, and $\textbf{M}$, the suitable framework 
that is needed to use the 
parametrized difference Galois theory of \cite{HS08}. Several auxiliary results are gathered in Section 
\ref{sec: aux}. They concern algebraic groups, the behavior of the difference Galois group when considered 
over different field extensions, 
and Picard-Vessiot extensions associated with iterated systems. 
Finally, Section \ref{sec: main} is devoted to the proof of Theorem~\ref{thm: main}. 

\section{Difference and parametrized difference Galois theories}\label{sec: galois}

In this section, we provide a short introduction to difference Galois theory and to parametrized difference 
Galois theory. 

\subsection{Difference Galois theory}\label{subsec: DGT}

We first recall some notation, as well as classical results, concerning  difference Galois theory. 
We refer the reader to \cite{vdPS97} for more details. 

\subsubsection{Notation in operator algebra}

A difference ring is a pair $(R,\rho)$ where $R$ is a ring and $\rho$ is a ring automorphism of $R$. 
If $R$ is a field, then $(R,\rho)$ is called a difference field or a $\rho$-field. 
An ideal $I$ of $R$ such that $\rho(I)\subset I$ is called a difference ideal or a $\rho$-ideal. 
We say that the difference ring $(R,\rho)$ is simple if the only $\rho$-ideals of $R$ are $\{0\}$ and $R$. 
Two difference rings $(R_1,\rho_1)$ and $(R_2,\rho_2)$ are isomorphic 
if there exists a ring isomorphism $\varphi$ 
between $R_1$ and $R_2$ such that $\varphi\circ \rho_1=\rho_2\circ \varphi$.  
A difference ring $(S,\rho')$ is a difference ring extension of $(R,\rho)$ if $S$ is a ring extension of $R$ 
and if $\rho'_{\mid R}=\rho$. In this case, we usually keep on denoting $\rho'$ by $\rho$.  
When $R$ is a $\rho$-field, we say that $S$ is a $R$-$\rho$-algebra. 
Two difference ring extensions $(R_1,\rho)$ and $(R_2,\rho)$ of the difference ring $(R,\rho)$ 
are isomorphic over $(R,\rho)$ if there exists a difference ring isomorphism $\varphi$ 
from $(R_1,\rho)$ to $(R_2,\rho)$ such that $\varphi_{\mid R}=\mbox{Id}_{R}$.  
The ring of constants of the difference ring $(R,\rho)$ is defined by 
$$
R^{\rho}:=\{f\in R \mid \rho(f)=f\} \,.
$$
If $R^{\rho}$ is a field, it is called the field of constants. 
If there is no risk of confusion, we usually simply say that $R$, instead of $(R,\rho)$, 
is a difference ring (or a difference field, or a difference ring extension...).  

\subsubsection{Difference equations and linear difference systems}  
A linear $\rho$-equation  of order $n$ over $K$ is an equation of the form 
\begin{equation}\label{eq: rho}
 \cL(y):=\rho^{n} (y) +a_{n-1}\rho^{n-1}(y)+ \cdots + a_{0} y=0 \,,
\end{equation}
with  $a_{0},...,a_{n-1} \in K$. If   $a_0 \neq 0$, 
this relation  can be written in matrix form as 
\begin{equation}\label{eq: companion}
\rho Y=A_\cL Y\,
\end{equation}
where
$$
 A_\cL :=\begin{pmatrix}
0&1&0&\cdots&0\\
0&0&1&\ddots&\vdots\\
\vdots&\vdots&\ddots&\ddots&0\\
0&0&\cdots&0&1\\
-a_{0}& -a_{1}&\cdots & \cdots & -a_{n-1}
\end{pmatrix} \in \mathrm{GL}_{n}(K) \,.
$$
The matrix $A_\cL$ is called the companion matrix associated with Equation~\eqref{eq: rho}. 
It is often more convenient to use the notion of linear difference system,  
that is of system of the form 
\begin{equation}\label{eq:systeminitial}
\rho(Y)=AY, \hbox{ with } A \in \GL_n(K).\,
\end{equation}

We recall that two difference systems $\rho (Y)=AY$ and $\rho (Y)=BY$ with $A,B \in \GL_{n}(K)$ are 
said to be \emph{equivalent over $K$} if  there exists a gauge transformation $T \in \GL_{n}(K)$ such that 
$B=\rho(T) AT^{-1}$. In that case, ${\rho (Y)=AY}$ if and only if $\rho (TY)=B (TY)$. 

\begin{rem}\label{rem:orderofthesystorderoftheequation}
By \cite[Theorem B.2.]{HenSin}, if $K$ contains a nonperiodic element with respect to $\rho$, then 
the cyclic vector lemma holds, and any linear difference system is equivalent to one given by a 
companion matrix associated with some linear equation. 
Let $L$ be a $\rho$-field extension of $K$ and let  $(f_0,f_1,\dots,f_{n-1})^\top \in L^n$ be a nonzero  
solution to a linear system $\rho(Y)=AY$, with $A \in \GL_n(K)$. Then each coordinate $f_j$ satisfies some 
linear $\rho$-equation over $K$ of order at most $n$. 
\end{rem}

\subsubsection{Picard-Vessiot rings and difference Galois groups}\label{sec:PVring}

A \emph{Picard-Vessiot ring} for \eqref{eq:systeminitial} over  a difference field  $(K, \rho)$ of characteristic zero 
is a $K$-$\rho$-algebra satisfying the following three properties. 

\smallskip
\begin{itemize}
\item[(1)] There exists $U \in \GL_{n}(R)$ such that $\rho(U)=AU$. Such a matrix $U$ is called a
\emph{fundamental matrix}.  

\smallskip

\item[(2)] $R$ is generated as a ring over $K$ by the coordinates of $U$ and by $\det(U)^{-1}$,  that is 
$R=K[U,\det(U)^{-1}]$. 

\smallskip

\item[(3)] $R$ is a simple difference ring.

\end{itemize}
 A Picard-Vessiot ring is not always an integral domain. However, it is a direct 
 sum of integral domains which are transitively permuted by $\rho$. More precisely, by \cite[Corollary 1.16]{vdPS97}, 
 there exist $r\geq 1$ and orthogonal idempotent elements $e_{0},\dots, e_{r-1}$, such that 
 $$
 R=e_{0}R\oplus \dots \oplus e_{r-1}R\,,
 $$ 
 where $e_{i}R$ is an integral domain and $\rho(e_{i})=e_{i+1 \mathrm{mod} r}$. We recall that 
Picard-Vessiot rings always exist. Indeed, it is sufficient to endow the polynomial $K$-algebra $K[X,\frac{1}{\det(X)}]$ with a structure of $K$-$\rho$-algebra by setting ${\rho(X)=AX}$. Then, for  any  maximal $\rho$-ideal  $\mathfrak{M}$  of 
$K[X,\frac{1}{\det(X)}]$, the quotient $R=K[X,\frac{1}{\det(X)}]/\mathfrak{M}$ is a Picard-Vessiot ring.  
However, this construction does not guarantee that  the ring of $\rho$-constant has not grown. This justifies the introduction of the   more convenient  notion of Picard-Vessiot extension.  

\medskip

A \emph{Picard-Vessiot extension} $\cQ$  for \eqref{eq:systeminitial} over  a difference field  $(K, \rho)$ of characteristic zero 
is a $K$-$\rho$-algebra  $\cQ$ satisfying the following properties. 

\smallskip
\begin{itemize}
\item[(1)] There exists $U \in \GL_{n}(\cQ)$ such that $\rho(U)=AU$. Such a matrix $U$ is called a
fundamental matrix.  

\smallskip

\item[(2)]  $\cQ$ is a \emph{pseudofield} extension of $K$, that is there exist a positive integer $r$, orthogonal idempotent elements $e_{0},\dots, e_{r-1}$ of $\cQ$, and 
 a field extension $L$ of $K$ such that $\cQ=e_0 L  \oplus \hdots \oplus e_{r-1} L$ and 
$\rho(e_i) = e_{i+1 \mathrm{mod} r}$. 

\smallskip 

\item[(3)] $\cQ$ is the smallest pseudofield extension of $K$ containing $U$. 

\smallskip

\item[(4)] $\cQ^{\rho}=K^{\rho}$.  

\end{itemize}

When  $\cQ$ is a field, we say that $\cQ$ is a \emph{Picard-Vessiot field extension}. 
By  \cite[$\S$1.1]{vdPS97}, if $K^{\rho}$ is algebraically closed, then  there exists a Picard-Vessiot extension and two Picard-Vessiot extensions are isomorphic as $K$-$\rho$-algebra. 
The relation between Picard-Vessiot-rings and Picard-Vessiot-extensions is given by the following proposition, which is a straightforward consequence of \cite[Proposition 2.5 and Corollary 2.6]{OW15}.

\begin{propo}\label{prop:PvringPvext}
If $K^{\rho}$ is an algebraically closed field of characteristic zero, then the following properties hold. 

\begin{itemize}
\item Let $\cQ$ be a Picard-Vessiot extension over $K$ and let us define ${R:=K[U, \frac{1}{\det(U)}] \subset \cQ}$, 
where $U$ is a fundamental matrix. 
Then $R$ is a Picard-Vessiot ring.
\item If $R$ is a Picard-Vessiot ring over $K$ then the total quotient ring\footnote{We recall that, 
given a ring $R$, its total quotient ring is defined as the localization of $R$ at the multiplicative set 
formed by all nonzero divisors of $R$.} 
of $R$ is a Picard-Vessiot extension.  
\end{itemize}
\end{propo}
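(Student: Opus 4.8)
The plan is to prove Proposition \ref{prop:PvringPvext} by directly unwinding the definitions of Picard-Vessiot ring and Picard-Vessiot extension and invoking the cited structure theory of \cite{OW15}. For the first bullet, I start with a Picard-Vessiot extension $\cQ$ and its fundamental matrix $U\in\GL_n(\cQ)$, and set $R:=K[U,\frac{1}{\det(U)}]$. Properties (1) and (2) in the definition of a Picard-Vessiot ring are then immediate: (1) holds because $\rho(U)=AU$ by construction, and (2) holds by the very definition of $R$ as the $K$-algebra generated by the entries of $U$ and $\det(U)^{-1}$. The only substantive point is property (3), namely that $R$ is a \emph{simple} difference ring. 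The plan is to deduce this from the pseudofield structure of $\cQ=e_0L\oplus\cdots\oplus e_{r-1}L$ together with the minimality condition (3) of the Picard-Vessiot extension: any nonzero $\rho$-ideal of $R$ would, after passing to the pseudofield generated by $R$, contradict minimality or the fact that $\cQ$ is built from a field $L$. This is exactly the content extracted from \cite[Proposition 2.5]{OW15}, so I would cite it here rather than reprove the simplicity by hand.

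For the second bullet, I begin with a Picard-Vessiot ring $R=K[X,\frac{1}{\det(X)}]/\mathfrak{M}$ and form its total quotient ring $\mathrm{Tot}(R)$, and I must verify the four defining properties of a Picard-Vessiot \emph{extension}. The fundamental matrix $U$ of $R$ remains invertible in $\mathrm{Tot}(R)$ (its determinant is already a unit in $R$), giving (1). The pseudofield property (2) is where the decomposition $R=e_0R\oplus\cdots\oplus e_{r-1}R$ into a direct sum of integral domains transitively permuted by $\rho$, recalled just before the statement via \cite[Corollary 1.16]{vdPS97}, does the work: localizing at the nonzero divisors turns each integral-domain summand $e_iR$ into its fraction field, so $\mathrm{Tot}(R)=e_0L\oplus\cdots\oplus e_{r-1}L$ with $L=\mathrm{Frac}(e_0R)$, and $\rho$ permutes the idempotents cyclically as required. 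Minimality (3) follows because $\mathrm{Tot}(R)$ is generated over $K$ as a localization of $R=K[U,\frac{1}{\det(U)}]$, so no proper pseudofield subextension can contain $U$.

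The genuinely delicate property is (4), the no-new-constants condition $\mathrm{Tot}(R)^{\rho}=K^{\rho}$. Here the hypothesis that $K^{\rho}$ is \emph{algebraically closed} of characteristic zero is essential, and this is exactly where \cite[Corollary 2.6]{OW15} is invoked: the simplicity of $R$ forces the $\rho$-constants of $R$ to coincide with $K^{\rho}$, and passing to the total quotient ring does not enlarge the constants because any new constant would generate a nontrivial $\rho$-ideal in $R$ or violate the finiteness/algebraic-closedness constraints. I expect this constancy-preservation step to be the main obstacle, since it is the one place where the argument genuinely uses the global structure theory rather than formal manipulation of idempotents; everything else reduces to bookkeeping with the direct-sum decomposition. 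Accordingly, my proof would state properties (1)–(3) of each bullet by direct inspection and delegate the simplicity (first bullet) and the constant-preservation (second bullet) to the cited results of \cite{OW15}, which is precisely why the statement is advertised as a straightforward consequence of \cite[Proposition 2.5 and Corollary 2.6]{OW15}.
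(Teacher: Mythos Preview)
Your proposal is correct and takes essentially the same approach as the paper: the paper does not give a standalone proof but simply states that the proposition is a straightforward consequence of \cite[Proposition 2.5 and Corollary 2.6]{OW15}, and your write-up is a faithful unpacking of why those citations suffice, correctly isolating simplicity (first bullet) and preservation of constants (second bullet) as the only nontrivial points.
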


From now on, we assume that $K$ is a $\rho$-field such that $\mathbf{k}:=K^\rho$ 
is an algebraically closed field of 
characteristic zero. 
Let $\cQ$ be a Picard-Vessiot extension over $K$. The  \emph{difference Galois group} $\Gal(\mathcal{Q}/K)$ of \eqref{eq:systeminitial} over $K$ 
is defined as the group of $K$-$\rho$-algebra automorphisms of $\mathcal{Q}$:
$$
\Gal(\mathcal{Q}/K) :=\{ \sigma \in \mathrm{Aut}(\mathcal{Q}/K) \ | \ \rho\circ\sigma=\sigma\circ \rho \}.
$$

For any fundamental matrix $U \in \GL_n(\cQ)$, an easy computation shows that 
$U^{-1}\sigma(U) \in \GL_{n}(\mathbf{k})$ for all $\sigma \in \Gal(\mathcal{Q}/K)$. 
By  \cite[Theorem~1.13]{vdPS97}, the faithful representation
\begin{eqnarray*}
 \Gal(\mathcal{Q}/K) & \rightarrow & \GL_{n}(\mathbf{k}) \\ 
  \sigma & \mapsto & U^{-1}\sigma(U)
\end{eqnarray*}
identifies $\Gal(\mathcal{Q}/K) $ with a linear algebraic subgroup $G\subset\GL_{n}(\mathbf{k})$. 
Choosing another fundamental matrix of solutions $U$ leads to a conjugate representation. 

\subsubsection{Torsor and algebraic relations}  
A fundamental result in difference Galois theory (\cite[Theorem 1.13]{vdPS97}) says that 
the Picard-Vessiot ring $R$ is the coordinate ring of a $\Gal(\cQ/K)$-torsor over $K$. 
Thereby, the difference Galois group controls the algebraic
relations satisfied by the solutions to the underlying linear system.  As a corollary
of this structure of $\Gal(\cQ/K)$-torsor, one obtains the fundamental equality
$$
\dim_{\mathbf{k}} \Gal(\cQ/K)= \trd_K \cQ \,,
$$
where we let $\trd_K \cQ$ denote the transcendence degree of the field extension $L/K$. Here,  
the field $L$ is defined as in (2) of the definition of a Picard-Vessiot extension.


\subsubsection{The Galois correspondence}  

The Galois correspondence for linear difference systems can be summarized as 
follows (see \cite[Theorem 1.29]{vdPS97}).  

\begin{theo}\label{thm: correspondence}
Let $\cQ$ be a Picard-Vessiot extension over $K$. Let $\mathcal R$ denote the set of  $K$-$\rho$-algebras $F$ such that $F\subset \mathcal Q$ and such that 
every nonzero divisor of $F$ is a unit of $F$. Let $\mathcal G$ denote the set of algebraic subgroups of 
$\Gal(\mathcal Q/K)$. Then the following properties hold. 
\begin{itemize}
\item[-] For any $F\in \mathcal R$, the set $$G(\mathcal Q/F):=\{\sigma \in \Gal(\mathcal Q/K) \mid \sigma(f)=f, \;\;\;\forall f\in F\}$$ 
is an algebraic subgroup of $\Gal(\mathcal Q/K)$. 

\item[-] For any $H\in \mathcal G$, the set $$\mathcal Q^{H}:=\{f\in \mathcal Q  \mid \sigma(f)=f, \;\;\;\forall \sigma\in H\}$$ 
belongs to $\mathcal R$. 

\item[-] The maps 
$$\begin{array}{ll}r: &\mathcal R \to \mathcal G\\
&F\mapsto G(\mathcal Q/F)\end{array} \;\;\mbox{ and }\;\; \begin{array}{ll}g:&\mathcal{G} \to \mathcal R\\&
H\mapsto \mathcal Q^{H}\end{array}$$ 
are each other's inverses. 
\end{itemize}
\end{theo}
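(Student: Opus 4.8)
The plan is to follow the standard route for Galois correspondences in Picard--Vessiot theory, the decisive input being the torsor structure of the Picard--Vessiot ring recalled above. I would first check that the two maps are well defined. For $F\in\mathcal R$, the set $G(\cQ/F)$ is the intersection over $f\in F$ of the stabilizers $\{\sigma\mid\sigma(f)=f\}$; since $\sigma(U)=U\,(U^{-1}\sigma(U))$ depends algebraically on the coordinates of the constant matrix $U^{-1}\sigma(U)\in\GL_n(\mathbf{k})$, each such stabilizer is Zariski closed, hence so is their intersection, and $G(\cQ/F)\in\mathcal G$. Conversely, for a closed subgroup $H$ the invariants $\cQ^H$ form a $K$-$\rho$-subalgebra of $\cQ$; that every nonzero divisor of $\cQ^H$ is a unit follows from the pseudofield structure of $\cQ$, since the inverse in $\cQ$ of an $H$-invariant nonzero divisor is again $H$-invariant. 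Thus $\cQ^H\in\mathcal R$.

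The crucial reduction is that, for \emph{any} $F\in\mathcal R$, the extension $\cQ/F$ is again a Picard--Vessiot extension for the same system $\rho(Y)=AY$ with the same fundamental matrix $U$: properties (1)--(3) of a Picard--Vessiot extension are inherited, and property (4) holds because the constants cannot grow, as $\mathbf{k}=K^\rho\subseteq F^\rho\subseteq \cQ^\rho=\mathbf{k}$. Consequently $\Gal(\cQ/F)$ coincides with $G(\cQ/F)$, and, after changing the base field from $K$ to $F$, the entire problem reduces to the single identity $\cQ^{\Gal(\cQ/K)}=K$. Granting this identity, the composite $g\circ r$ is the identity, since applying it over the base $F$ yields $\cQ^{G(\cQ/F)}=\cQ^{\Gal(\cQ/F)}=F$.

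It then remains to establish $\cQ^{\Gal(\cQ/K)}=K$ and the reverse composite $r\circ g=\mathrm{id}$. For the former I would use the torsor theorem: the Picard--Vessiot ring $R=K[U,\det(U)^{-1}]$ is the coordinate ring of a $\Gal(\cQ/K)$-torsor over $K$, which on the level of rings produces an isomorphism $\cQ\otimes_K\cQ\cong\cQ\otimes_{\mathbf{k}}\mathbf{k}[G]$ intertwining the Galois action with translation on the second factor; an element fixed by all of $\Gal(\cQ/K)$ is then constant along the group direction, forcing it into the image of $K$. For $r\circ g=\mathrm{id}$ I would show $G(\cQ/\cQ^H)=H$ for every closed $H$; the inclusion $H\subseteq G(\cQ/\cQ^H)$ is immediate, and for the reverse one argues geometrically, using the same torsor dictionary to identify $H$-invariant elements of $\cQ$ with $H$-invariant regular functions on $G$ and invoking that, by Chevalley's theorem, $H$ is exactly the common stabilizer of such functions, so that any $\sigma\notin H$ moves some element of $\cQ^H$.

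I expect the main obstacle to lie in the torsor step and, more precisely, in the bookkeeping forced by the fact that $\cQ$ is only a pseudofield: it is a finite direct sum of isomorphic fields that $\rho$ permutes cyclically through the idempotents $e_0,\dots,e_{r-1}$, and $\Gal(\cQ/K)$ has a corresponding component group acting on these idempotents. Making the isomorphism $\cQ\otimes_K\cQ\cong\cQ\otimes_{\mathbf{k}}\mathbf{k}[G]$ precise and transporting the invariance statements faithfully through the idempotent decomposition, so that ``constant along the group direction'' really lands in $K$ rather than in a larger invariant subring, is the delicate point that distinguishes the difference setting from the classical differential one; everything else is formal once this torsor dictionary is in place.
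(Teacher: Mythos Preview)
The paper does not prove this theorem at all: it merely states it and cites \cite[Theorem~1.29]{vdPS97}. So there is no in-paper argument to compare your proposal against; your sketch is, in outline, the standard route taken in that reference.

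Two small points where your write-up would need tightening if you actually carry it out. First, your reduction ``$\cQ/F$ is again a Picard--Vessiot extension for any $F\in\mathcal R$'' glosses over condition~(2): $F$ need not be a field (it is only required that nonzero divisors be units, so $F$ may itself be a nontrivial pseudofield with its own idempotents dividing those of $\cQ$), and one must check that the pseudofield structure of $\cQ$ is still of the required shape \emph{over $F$}. Second, the torsor isomorphism you invoke is $R\otimes_K R \cong R\otimes_{\mathbf{k}}\mathbf{k}[G]$ for the Picard--Vessiot \emph{ring} $R$, not for the total quotient $\cQ$; passing to $\cQ$ requires an extra localization step. Neither issue is fatal, and both are handled in \cite{vdPS97}, but they are exactly the ``idempotent bookkeeping'' you flag in your last paragraph.
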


\begin{rem}
In the case where $R$ is an integral domain, $\mathcal Q$ is a difference field, and Theorem \ref{thm: correspondence} provides a correspondence 
between the difference subfields of $\mathcal Q$ containing $K$, on the one hand, and the algebraic subgroups of 
$\Gal(\mathcal Q/K)$, on the other hand. 
\end{rem}

\subsection{Parametrized difference Galois theory}\label{subsec: PDGT}

We recall here some notation and results concerning parametrized difference Galois theory. 
We refer the reader to \cite{HS08} for more details. 

\subsubsection{Notation in operator algebra} 
A differential ring is a pair $(R,\delta)$ where $R$ is a ring  
and $\delta$ is a derivation on $R$, that is 
a homomorphism of additive group 
from $R$ into itself satisfying the Leibniz rule:   
$$
\delta(ab)=\delta(a)b+a\delta(b) \,.
$$ 
If $R$ is a field, then $(R,\delta)$ is called a differential field or a $\delta$-field.  
The ring of constants  of $R$ is defined by 
$$
R^\delta =\{ f \in R \mid  \delta(f)= 0\}\,.
$$
 Let $L$ be a $\delta$-ring extension of a $\delta$-ring $R$. Given a subset $S$ of $L$, we let 
  $R\{S\}_{\delta}$ denote the smallest  $R$-$\delta$-algebra generated by $S$. 
A field $K$ endowed with a structure of both difference and differential field is called 
a $(\rho,\delta)$-field if $\rho$ and $\delta$ \emph{ commute}, that is if 
$$
\rho\circ\delta (f)= \delta\circ\rho (f)
$$ 
for every $f\in K$.

The study of algebraic structures of difference and differential fields is the main object of the so-called 
differential and difference algebra.  We refer the interested reader to the founding books of Kolchin 
(\cite{Ko73}) for further details concerning 
differential algebra and to the book of Cohn for further details concerning difference algebra (\cite{Cohn}). 
It is worth mentioning that these two setting are very different. 
In order to avoid many analogous definitions, we use the following convention:  we add  an 
 operator prefix to an algebraic attribute to signify the compatibility of the algebraic structure with respect to the operator. 
 For instance, a $(\rho,\delta)$-ring is a ring equipped with $\rho$ and $\delta$,  a $(\rho,\delta)$-field extension 
 $K \subset L$ is a field extension such that the fields $L$ and $K$ are $(\rho,\delta)$-fields and such that the action 
 of $\rho$ and  $\delta$ are compatible with the field extension, and so on.

 \subsubsection{$(\rho,\delta)$-Picard-Vessiot ring and the parametrized difference Galois group}
 \label{sec:PPV}

 Let  $\tK$ be a $(\rho,\delta)$-field and let $A \in \GL_n(\tK)$. We consider the difference system 
\begin{equation}\label{eq:eqinittK}
\rho(Y)=AY.
\end{equation}

 A $(\rho,\delta)$-\emph{Picard-Vessiot ring} for \eqref{eq:eqinittK} over $\tK$ is a $(\rho,\delta)$-ring extension 
 $\widetilde{R}$ of $\tK$ satisfying the following three properties. 

\medskip 

\begin{itemize}
\item[(1)] There exists $U \in \GL_{n}(\widetilde{R})$ such that $\rho (U)=AU$. 
Such a matrix $U$ is called a fundamental matrix. 

\smallskip

\item[(2)] $\widetilde{R}$ is generated as a $\delta$-ring over $\tK$  
by the coordinates of $U$ and by $\det(U)^{-1}$, that is 
$\widetilde{R}=\tK\{U,\det(U)^{-1}\}_\delta$.

\smallskip

\item[(3)] $\widetilde{R}$ is a simple $(\rho,\delta)$-ring, that is the only $(\rho,\delta)$-ideals of $\widetilde{R}$ 
are $\{0\}$ and $\widetilde{R}$.
\end{itemize}

\medskip

A $(\rho,\delta)$-Picard-Vessiot ring is not always an integral domain but it is a direct sum of integral domains 
closed under  $\delta$ and transitively permuted by $\rho$. A construction similar to that of Section \ref{sec:PVring} 
shows that $(\rho,\delta)$-Picard-Vessiot rings always exist. Again, this construction does not ensure that 
the ring of $\rho$-constants has not grown. This justifies the introduction of the  more convenient  notion of 
$(\rho,\delta)$-Picard-Vessiot extension. 

A $(\rho,\delta)$-\emph{Picard-Vessiot extension} $\tcQ$  for \eqref{eq:eqinittK} over 
$\tK$ is a $\tK$-$\rho$-$\delta$-algebra 
satisfying the following  properties.  

\smallskip
\begin{itemize}
\item[(1)] There exists $U \in \GL_{n}(\tcQ)$ such that $\rho(U)=AU$. Such a matrix $U$ is called a
fundamental matrix.   

\smallskip

\item[(2)] $\tcQ$ is a pseudo $\delta$-field extension of $\widetilde{K}$, that is there exist 
a positive integer $r$, orthogonal idempotent elements $e_{0},\dots, e_{r-1}$ of $\tcQ$, 
and $\tL$ a $\delta$-field extension of $\widetilde{K}$ such that $\tcQ=e_0 \tL  \oplus \hdots \oplus e_{r-1} \tL$ 
and $\rho(e_i) = e_{i+1 \mathrm{mod} r}$.

\smallskip 

\item[(3)] $\tcQ$ is equal to $\tK\langle U \rangle$, that is the smallest pseudo $\delta$-field extension 
of $\tK$ containing $U$. 

\smallskip

\item[(4)] $\tcQ^{\rho}=\tK^{\rho}$.

\end{itemize}

When $\tcQ$ is a field, we say that $\tcQ$ is a 
$(\rho,\delta)$-\emph{Picard-Vessiot field extension}. By  \cite[Corollary 9]{Wibmerexistence}, 
if $\tK^{\rho}$ is algebraically closed, then  there exists a $(\rho,\delta)$-Picard-Vessiot extension $\tcQ$ for \eqref{eq:eqinittK} over $\tK$. The smallest $\tK$-$(\rho,\delta)$-subalgebra  $\tK\{U,\frac{1}{\det(U)}\}_{\delta}$ 
of $\tcQ$ generated  by a fundamental matrix and the inverse  of its determinant  is a $(\rho, \delta)$-Picard-Vessiot ring. However, in order to ensure the uniqueness of  $(\rho, \delta)$-Picard-Vessiot rings 
 up to $\tK$-$(\rho,\delta)$-algebra isomorphisms, one needs the field $\tK^\rho$ to be $\delta$-closed (\cite[Proposition~6.16]{HS08}).
 We recall that a  differential field $(L,\delta)$ is called \emph{differentially closed} 
or $\delta$-closed if, for every set of $\delta$-polynomials $\mathcal F$,  
the system of $\delta$-equations $\mathcal F=0$ has a solution in some $\delta$-field extension of $L$
 if and only if it has a solution in $L$. Note that a $\delta$-closed field is algebraically closed. 
Differentially closed fields are huge and, for instance, none of the function fields $K$ introduced in  Section  \ref{sec:mainresult} satisfies the assumption that $K^\rho$ is  differentially closed. Nonetheless,  working with $\delta$-closed fields allows us to simplify  many arguments. Thus, we will embed  $K$ into a $(\rho, \delta)$-field $\tK$ whose 
field of $\rho$-constants is $\delta$-closed, and we will use  some descent argument to go back to $K$ if necessary (see Sections \ref{sec: paramframework} and \ref{sec:goingupdown}).

From now on, we let $\tK$ denote a $(\rho,\delta)$-field whose field of $\rho$-constants $\widetilde{C}:=\tK^\rho$ 
is $\delta$-closed. Let $\tcQ$ be a $(\rho,\delta)$-Picard-Vessiot extension over $\tK$. 
The \emph{parametrized difference Galois group} 
$\Gal^{\delta}(\widetilde{\mathcal{Q}}/\tK)$ of \eqref{eq:eqinittK} over $(\tK,\rho,\delta)$, also called the 
$(\rho,\delta)$-Galois group,  
is defined as the group of 
$\tK$-$(\rho,\delta)$-algebra automorphisms of $\tcQ$: 
$$
\Gal^{\delta}(\widetilde{\mathcal{Q}}/\tK):=
\left\{ \sigma \in \mathrm{Aut}(\widetilde{\mathcal{Q}}/\tK) \Big| 
\ \rho\circ\sigma=\sigma\circ \rho \text{ and } \delta\circ\sigma=\sigma\circ \delta \right\} \,. 
$$
The parametrized difference Galois group $\Gal^\delta(\widetilde{\mathcal{Q}}/\tK)$  
 is a geometric object that encodes the differential algebraic relations between the solutions to \eqref{eq:eqinittK}. See \cite[Proposition 6.24]{HS08} for more details. 
Roughly speaking, the larger $\Gal^\delta(\widetilde{\mathcal{Q}}/\tK)$, the fewer $\delta$-algebraic 
relations over $\tK$ that hold among the elements of $\widetilde{\mathcal{Q}}$.    

For any fundamental  matrix $U \in \GL_n(\tcQ)$, an easy computation shows that  $U^{-1}\sigma(U)\in \GL_{n}(\tC)$ 
for any element ${\sigma \in \Gal^{\delta}(\widetilde{\mathcal{Q}}/\tK)}$.  
By \cite[Proposition~6.18]{HS08}, the faithful representation
\begin{eqnarray*}
 \Gal^\delta(\widetilde{\mathcal{Q}}/K) & \rightarrow & \GL_{n}(\tC) \\ 
 \sigma & \mapsto & U^{-1}\sigma(U)
\end{eqnarray*}
identifies  $\Gal^\delta(\widetilde{\mathcal{Q}}/\tK) $ with a \emph{linear differential algebraic subgroup}   
$H\subset\GL_{n}(\tC)$, i.e. a subgroup of $\GL_{n}(\tC)$ that 
can be defined as the vanishing set of polynomial $\delta$-equations with coefficients in $\tC$. 
Furthermore, choosing another fundamental matrix $U$ leads to a conjugate representation.  

 \subsubsection{Comparison between difference Galois groups and parametrized difference Galois groups} 

 Let $\widetilde{R}$ denote a $(\rho,\delta)$-Picard-Vessiot ring 
 for the system \eqref{eq:eqinittK}  over $\tK$, let 
$\widetilde{\mathcal{Q}}$ denote the associated  $(\rho,\delta)$-Picard-Vessiot extension, 
and let $U \in \GL_n(\widetilde{R})$ be a fundamental matrix of solutions. 
By \cite[Proposition 6.21]{HS08}, $R:= \tK[U,\det (U)^{-1}]$  is a Picard-Vessiot ring for \eqref{eq:eqinittK} over $\tK$. 
Let $\mathcal{Q}$ denote the  Picard-Vessiot extension
corresponding to $R$.  Then we have 
${\mathcal{Q} \subset \widetilde{\mathcal{Q}}}$ 
and one can identify $\Gal^{\delta}(\widetilde{\mathcal{Q}}/\tK)$ with a subgroup of 
$\Gal(\mathcal{Q}/\tK)$ by restricting the elements of  
$\Gal^{\delta}(\widetilde{\mathcal{Q}}/\tK)$ to $\mathcal{Q}$.  
The following result provides a fundamental link between classical difference Galois theory 
and its parametrized counterpart, see \cite[Proposition 6.21]{HS08}.

\begin{propo}\label{prop: paradense}
The parametrized difference Galois group $\Gal^{\delta}(\widetilde{\mathcal{Q}}/\tK)$ is a 
Zariski-dense subgroup of  $\Gal(\mathcal{Q}/\tK)$.
\end{propo}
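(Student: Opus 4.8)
The plan is to deduce Proposition~\ref{prop: paradense} directly from the torsor structure recalled in Section~\ref{subsec: DGT}, exactly as in \cite[Proposition 6.21]{HS08}. First I would fix a fundamental matrix $U\in\GL_n(\widetilde R)$ and use the representations of Sections~\ref{sec:PVring} and~\ref{sec:PPV} to view $G:=\Gal(\mathcal Q/\tK)$ as a linear algebraic subgroup of $\GL_n(\tC)$ and $H:=\Gal^\delta(\widetilde{\mathcal Q}/\tK)$ as a linear differential algebraic subgroup of $\GL_n(\tC)$, both through $\sigma\mapsto U^{-1}\sigma(U)$; the identification of $H$ with a subgroup of $G$ via restriction to $\mathcal Q$ (recalled just before the statement) makes these compatible, so $H\subset G$ as subsets of $\GL_n(\tC)$. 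Let $\overline H$ denote the Zariski closure of $H$ in $G$; the goal is to prove $\overline H=G$.

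The key step is to identify the vanishing ideal of $\overline H$ with the defining ideal of a subgroup to which the Galois correspondence of Theorem~\ref{thm: correspondence} applies. Concretely, set $R:=\tK[U,\det(U)^{-1}]\subset\widetilde R$, which is a Picard-Vessiot ring for \eqref{eq:eqinittK} over $\tK$ by \cite[Proposition 6.21]{HS08}. Since $R$ is the coordinate ring of a $G$-torsor over $\tK$, an element $r\in R$ that is fixed by every $\sigma\in H$ is, after base change, a regular function on the torsor invariant under $\overline H$; one shows such an $r$ lies in $\mathcal Q^{\overline H}$ (the intermediate pseudofield attached to $\overline H$ by the correspondence). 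The point is that fixing is a Zariski-closed condition on $G$: for a fixed $r\in R$, the set $\{g\in G\mid g\cdot r=r\}$ is an algebraic subgroup of $G$, so if it contains $H$ it contains $\overline H$. Conversely, $\overline H$ is an algebraic subgroup of $G$, so by Theorem~\ref{thm: correspondence} it equals $G(\mathcal Q/\mathcal Q^{\overline H})$, and one must check $\mathcal Q^{\overline H}=\tK$.

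To close the argument I would show $\mathcal Q^{\overline H}=\mathcal Q^{H}$ and then that the latter equals $\tK$. The first equality is the Zariski-density bookkeeping of the previous paragraph: any $f\in\mathcal Q$ fixed by $H$ is fixed by $\overline H$ because the stabilizer of $f$ in $G$ is Zariski closed. For the second equality, $\mathcal Q^{H}\subset\widetilde{\mathcal Q}^{H}=\tK$ by the parametrized Galois correspondence \cite[Theorem 6.20]{HS08} (the $(\rho,\delta)$-Galois group of $\widetilde{\mathcal Q}/\tK$ has $\tK$ itself as its fixed pseudofield), together with $\mathcal Q\subset\widetilde{\mathcal Q}$. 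Hence $\mathcal Q^{\overline H}=\tK$, and the classical Galois correspondence forces $\overline H=G(\mathcal Q/\tK)=G$, which is exactly Zariski-density of $H$ in $G$.

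The main obstacle I anticipate is the careful handling of the pseudofield (non-integral-domain) situation: $\mathcal Q$ and $\widetilde{\mathcal Q}$ need not be fields, so "fixed field" must be replaced by "fixed pseudofield", the stabilizer-is-closed argument must be run over the ring $R$ and its total quotient ring (via Proposition~\ref{prop:PvringPvext}) rather than over a field, and one must make sure the idempotents $e_0,\dots,e_{r-1}$ are permuted compatibly so that restriction $\Gal^\delta(\widetilde{\mathcal Q}/\tK)\to\Gal(\mathcal Q/\tK)$ is well defined and injective. Once these technical points are dispatched — and they are precisely the content invoked from \cite[Proposition 6.21]{HS08} — the density statement follows formally from the two Galois correspondences and the elementary fact that a stabilizer is a Zariski-closed subgroup.
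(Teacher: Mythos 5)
Your argument is correct, and it is essentially the standard proof of this statement: the paper itself does not reprove the proposition but simply invokes \cite[Proposition 6.21]{HS08}, and your chain $\mathcal Q^{\overline H}\subset\mathcal Q^{H}\subset\widetilde{\mathcal Q}^{H}=\tK$ followed by the classical Galois correspondence is exactly the argument given there. The only cosmetic remark is that the step $\mathcal Q^{\overline H}=\mathcal Q^{H}$ via closedness of stabilizers is not strictly needed, since $H\subset\overline H$ already gives $\mathcal Q^{\overline H}\subset\mathcal Q^{H}\subset\tK$ directly.
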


 \section{The parametrized  framework }\label{sec: paramframework}

Let us go back to our $K,F_0,F,\rho,\tr$ corresponding to Cases $\textbf{S}_{0}$, $\textbf{S}_{\infty}$, \textbf{Q}, 
and \textbf{M} (see Section \ref{sec:mainresult}). 
We first observe that, in each case, $K^{\rho}=\C$ is algebraically closed and 
$K$ has characteristic zero. Hence the theory of \cite{vdPS97} described in Section \ref{subsec: DGT} applies. 
In this section, we describe some suitable field extensions $\widetilde{K}$ of $K$, and $\widetilde{C}$ of 
$C= F^{\rho}$,  
that allow us to apply the parametrized difference Galois theory of \cite{HS08} described in Section 
\ref{subsec: PDGT}.

We first recall that any differential field has a differential closure, {\it i.e.} a differentially closed field extension that is minimal with respect to this property.  In Case $\textbf{S}_0$, we set $\delta:=\tr$, and we let $\tC$ denote 
a $\delta$-closure of $C_h$, the subfield of $\cM er(\C)$ formed by those meromorphic functions  that are $h$-periodic.  
Note that $C_h$ is also a $\delta$-field for $\rho$ and $\delta$ commute.  Sometime, we will use the notation 
$\widetilde{C_{h}}$ instead of $\widetilde{C}$.  
In Cases $\textbf{S}_{\infty}$ and $\textbf{Q}$,  
we also set $\delta:=\tr$, and we let $\widetilde{\C}$ denote 
a $\delta$-closure of $\C$ and we set $\tC:=\widetilde{\C}$.   
By \cite[Lemma~2.3]{DHR18}, in these three cases, the field 
$$\widetilde{K}:=\mathrm{Frac}(K \otimes_{\C} \widetilde{C})$$ 
is a   $(\rho,\delta)$-field extension  of $K$ 
such that 
$\widetilde{K}^{\rho}=\widetilde{C}$.

\medskip
  
Case \textbf{M} is a bit more tricky. Indeed, in order to apply the theory of \cite{HS08}, we need to consider 
a derivation $\delta$ on $\widetilde{K}$ such that $\delta$ and $\rho$ commute.  
Following \cite{DHR18}, we let $\log$ denote a transcendental element over 
$K$, and we define $\delta=\log\times \tr$ to be the derivation that acts on $K(\log)$ by 
$$
\delta (\log)=\log  \mbox{ and }\delta (x^{a})=ax^a\log \,.
$$
Then we define a structure of $\rho$-field on $K(\log)$ by setting $\rho (\log)=p\log$. 
We observe that $K(\log)^{\rho}=\C$ and that $\rho$ and $\delta$ commute. 
Finally, we consider $\widetilde{\C}$ a 
$\d$-closure  of  $\C$ such that $\rho$ acts trivially on $\widetilde{\C}$.  
We set  $\tC:=\widetilde{\C}$ and by \cite[Lemma 2.3]{DHR18}, the field 
$$\widetilde{K}:=\mathrm{Frac}(K(\log) \otimes_{\C} \widetilde{C})\, ,$$ 
 is a $(\rho,\delta)$-field such that 
$\widetilde{K}^{\rho}=\widetilde{C}$.

In all cases, $K$ is a $\rho$-field of characteristic zero 
such that $K^{\rho}=\C$, $F$ is a $\rho$-field extension of $K$ with $C:=F^\rho$, and $\widetilde{K}$ is a 
$(\rho,\delta)$-field of characteristic zero, such that $\widetilde{K}^{\rho}=\widetilde{C}$, where $\widetilde{C}$ is differentially closed. Furthermore, $\rho$ and $\delta$ commute. 
The following table summaries the different frameworks  
we consider in this paper. 

$$
\begin{array}{|l|l|l|l|l|l|l|l|l|l}\hline 
\hbox{Case}&F & K & C   & \tr &\delta &\widetilde{C} &\widetilde{K} \\ \hline  
\mathbf{S}_{0}& \mathcal{M}er(\C)& \C(x)& C_h &\frac{d}{dx}&\frac{d}{dx}& \widetilde{C_h}&  \widetilde{C_h}(x) 
 \\   \hline  

\mathbf{S}_{\infty}&\C((x^{-1}))&  \C(x)& \C& \frac{d}{dx}&\frac{d}{dx}&   \widetilde{\C}& \widetilde{\C}(x)  \\ \hline  

 \mathbf{Q}&\C((x^{1/*}))  & \C(x^{1/*})& \C& x \frac{d}{dx}&x\frac{d}{dx} &  \widetilde{\C}
   & \widetilde{\C}(x^{1/*})\\ \hline  
 
\mathbf{M}&\C((x^{1/*}))&  \C(x^{1/*})& \C& x\frac{d}{dx} &\log \times x\frac{d}{dx}& \widetilde{\C} 
& \widetilde{\C}(x^{1/*})(\log)\\ \hline
\end{array}
$$

\medskip

\begin{rem}\label{rem:diffconstant}
Note that the field of $\delta$-constants of $C$ is the algebraically closed field $\C$. 
Then, by \cite[Lemma 9.3]{CS07}, the field of $\delta$-constants of $\tC$ is also $\C$. 
At some places, we will consider some iteration of the operator $\rho$. In Case $\bS_0$, 
it might happen that we start with the $\rho$-field extension $\tK=\widetilde{C_h}(x)$ of $K$, and then consider this extension as a $\rho^r$-field extension. Note that $\widetilde{C_h} \neq \widetilde{C_{rh}}$. 
Nonetheless, $\widetilde{C_h}$ is a $\rho^r$-constant field and 
 all results of Section 
\ref{sec: galois} can be applied to  the $(\rho^r,\delta)$-field $\tK$.
\end{rem}

 \section{Auxiliary results}\label{sec: aux}

In this section, we let $(K,\rho)$ be a difference field of characteristic zero and 
 we consider a $\rho$-linear difference system 
 \begin{equation}\label{eq: systg}
 \rho(Y)=AY, \mbox{ with } A \in \GL_n(K) \,,
 \end{equation}
where $\mathbf k:=K^\rho$ is an algebraically closed field.

 \subsection{Irreducible and reducible difference Galois groups}
 
The strategy for proving Theorem \ref{thm: main} is very different according to whether the difference 
Galois group  associated with the difference equation over $K$ satisfied by 
$f$ is irreducible or not. By irreducible, we mean that the group acts irreducibly on $\mathbf{k}^n$, see Definition \ref{def:irreducible}. We point out that this is different from saying that an algebraic group, 
viewed as an algebraic variety, is irreducible.  

\begin{defi}\label{def:irreducible} 
Let $G\subset \GL_{n} (\mathbf{k})$ be a group.  
We say that $G$ is \emph{irreducible} if it acts irreducibly on 
$\mathbf{k}^n$, that is, if the only $\mathbf{k}$-vector subspaces of $\mathbf{k}^{n}$ invariant by $G$ are $\{0\}$ and $\mathbf{k}^{n}$. 
We say that $G$ is \emph{reducible} otherwise.  
When $G$ is an irreducible group, we say that $G$ is \emph{imprimitive} if there exist an integer 
$r\geq 2$, and $V_{1},\dots, V_r$, some $\mathbf{k}$-vector spaces satisfying the following conditions. 
\begin{itemize}
\item[(i)] $\mathbf{k}^{n}=V_1 \oplus \dots \oplus V_r$.  
\item[(ii)] For every  $g\in G$, the mapping $V_i \mapsto g(V_i)$ is a permutation of the set $\{V_{1},\dots,V_r \}$. 
\end{itemize}
\noindent We say that $G$ is \emph{primitive} otherwise. 
\end{defi}
 
The next result shows that a difference Galois group that is irreducible and  imprimitive 
cannot be connected. We recall that an algebraic group $G$ is connected if it has no proper open subgroup with 
respect to the Zariski topology.  The \emph{identity component} of $G$  is the connected component of 
$G$ containing the identity.

\begin{lem}\label{lem6}
 Let  $G \subset \Gl_n(\mathbf{k})$ be an algebraic group. 
 If $G$ is connected and irreducible, then $G$ is primitive. 
\end{lem}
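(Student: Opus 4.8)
The plan is to prove the contrapositive: if a connected algebraic group $G\subset \Gl_n(\mathbf{k})$ acts irreducibly on $\mathbf{k}^n$ but is \emph{imprimitive}, then it cannot be connected. So suppose $G$ is irreducible and imprimitive, and fix a decomposition $\mathbf{k}^n = V_1\oplus\cdots\oplus V_r$ with $r\geq 2$ such that every $g\in G$ permutes the set $\{V_1,\dots,V_r\}$. This yields a group homomorphism $\pi\colon G\to \mathfrak{S}_r$ sending $g$ to the permutation it induces on the $V_i$'s. The kernel $N:=\ker\pi$ is precisely the subgroup of elements of $G$ stabilizing each $V_i$; it is a closed subgroup of $G$, since the condition $g(V_i)=V_i$ for all $i$ is Zariski-closed (it is cut out by the vanishing of the matrix blocks off the $V_i$'s in a basis adapted to the decomposition).

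First I would check that $\pi$ is surjective onto a \emph{transitive} subgroup of $\mathfrak{S}_r$: if the orbit of $V_1$ under $\pi(G)$ were a proper nonempty subset $S\subsetneq\{1,\dots,r\}$, then $\bigoplus_{i\in S}V_i$ would be a nonzero proper $G$-invariant subspace, contradicting irreducibility. In particular, since $r\geq 2$, the image $\pi(G)$ is a nontrivial subgroup of $\mathfrak{S}_r$, hence has order $\geq 2$. Therefore $N=\ker\pi$ is a closed subgroup of $G$ of index $|\pi(G)|\geq 2$. But a proper finite-index closed subgroup of an algebraic group is automatically open (its cosets are closed, finitely many, and cover $G$, so the complement of $N$ is closed and $N$ is open). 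A connected algebraic group has no proper open subgroup, by the very definition of connectedness recalled just before the lemma. This contradiction shows $G$ cannot be both connected and imprimitive; equivalently, a connected irreducible $G$ is primitive.

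The only genuinely delicate point is the claim that $N$ is \emph{closed}. To see this cleanly, I would argue as follows: the action of $G$ on the finite set $\{V_1,\dots,V_r\}$ of subspaces identifies $\{V_1,\dots,V_r\}$ with $r$ distinct points of an appropriate Grassmannian $\mathrm{Gr}(d_i,n)$ (grouping by dimension if the $\dim V_i$ differ), and the stabilizer of a point of a $G$-variety is always Zariski-closed; $N$ is the intersection of these finitely many stabilizers, hence closed. Alternatively, and more concretely, fixing a basis of $\mathbf{k}^n$ adapted to the decomposition, an element $g\in G$ lies in $N$ if and only if each block $g_{ij}$ with $i\neq j$ vanishes, which is plainly an algebraic condition. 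Either way, $N$ is a closed subgroup. With that in hand, everything else is formal. I expect the block-matrix or Grassmannian bookkeeping to be the main (and only) obstacle, and it is routine.

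Concretely, the proof would run: \emph{Proof.} Assume for contradiction that $G$ is connected, irreducible, and imprimitive, with a decomposition $\mathbf{k}^n=V_1\oplus\cdots\oplus V_r$, $r\geq2$, permuted by $G$. Let $\pi\colon G\to\mathfrak S_r$ be the induced permutation homomorphism and $N=\ker\pi$. Since $G$ is irreducible, $\pi(G)$ acts transitively on $\{1,\dots,r\}$, so $|\pi(G)|\geq r\geq2$; hence $N$ is a proper subgroup of $G$ of finite index $\geq2$. Using a basis adapted to the $V_i$, membership in $N$ is defined by the vanishing of the off-diagonal blocks, so $N$ is Zariski-closed; being of finite index, $N$ is also open. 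This contradicts the connectedness of $G$. Therefore $G$ is primitive. $\hfill\square$
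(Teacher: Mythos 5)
Your proof is correct and follows essentially the same route as the paper's: both exploit the finiteness of the permutation action on $\{V_1,\dots,V_r\}$ to produce a closed finite-index subgroup of $G$ (you take the kernel of $\pi\colon G\to\mathfrak{S}_r$, the paper takes the stabilizer of $V_1$ and counts cosets) and then invoke the fact that a connected algebraic group has no proper closed subgroup of finite index. The only cosmetic difference is where the contradiction lands: you use irreducibility to get transitivity of $\pi(G)$ and then contradict connectedness directly, while the paper concludes that the stabilizer of $V_1$ is all of $G$, so $V_1$ is $G$-invariant, contradicting irreducibility.
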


\begin{proof}
Let us assume by contradiction that $G$ is imprimitive. 
Then there exist an integer $r\geq 2$ and some  nonzero $\C$-vector-spaces 
$V_1, \dots, V_r\subsetneq \mathbf{k}^n$ 
such that $\mathbf{k}^n=V_1 \oplus \dots \oplus V_r$. Furthermore, the action induced by every element of $G$ on 
$\mathbf{k}^n$ is 
a permutation of the sets $\{V_1, \dots,V_r\}$. 
Let $G_{V_{1}}$ denote the stabilizer of $V_1$ in $G$. This is an algebraic subgroup of $G$. We claim that it has 
finite index in $G$.  Let us assume by contradiction that there are an infinite number of  cosets 
$g_i G_{V_{1}}$ of $G_{V_{1}}$ in $G$. 
Since the group of permutations of a set with $r$ elements is finite, there exist two distinct cosets 
$g_1 G_{V_{1}}$ and $g_2 G_{V_{1}}$ such that $g_1$ and $g_2$ induce the same permutation of the $V_i$'s.  
In that case, $g_2^{-1}g_1$ stabilizes $V_1$ and thus belongs to $G_{V_{1}}$, providing a contradiction. 
This proves the claim. 
Now, since $G_{V_{1}}$ is a closed subgroup of  $G$ of finite index, it contains the identity component of $G$. The latter is equal to $G$ for $G$ is assumed to be connected.   
 This means that $V_1$ is invariant under the action of $G$. Hence, $G$ is reducible. This provides a 
 contradiction.  
\end{proof}

 Now, let us recall the definition of a difference module, see \cite[\S 1.4]{vdPS97}.  
 
\begin{defi} A $K$-difference module $\cM$  is
a  pair $(M,\Sigma)$ where $M$ is a finite-dimensional $K$-vector space and $\Sigma$ is an 
additive endomorphism of $M$ such that $\Sigma(\lambda Y)= \rho( \lambda) \Sigma(Y)$ for 
every $Y \in M$ 
and $\lambda \in K$.
\end{defi} 
 
Let $A \in \GL_n(K)$. The $K$-difference module $\cM_A$ attached to the system $\rho(Y)=AY$ is 
$\cM_A = (K^n, \Sigma_A)$, where $\Sigma_A(Y) =A^{-1} \rho(Y)$. 
Given a  monic linear difference operator $\cL =a_0 y +a_1 \rho(y) +\dots +\rho^n(y)$ with coefficients $a_i$ in $K$  
and  $a_0 \neq 0$, one can consider the corresponding linear difference system $\rho(Y)=A_\cL Y$, where $A_\cL$ is 
the companion matrix of $\cL$. If $K$ contains  a nonperiodic element with respect to $\rho$, then  \cite[Theorem B.2]{HenSin} asserts that any $K$-difference module $\cM$ is isomorphic to  $\cM_{A_{\cL}}$ for some 
monic linear difference operator $\cL$.  This is equivalent to the existence of a so-called cyclic vector in $\cM$, 
that is, a nonzero element 
$e \in M$ such that  the vectors $e,\Sigma(e),\dots,\Sigma^{n-1}(e)$ form a $K$-basis of $M$. 

\medskip

The next lemma characterizes linear difference systems whose difference Galois group is reducible.

\begin{lem}\label{lem: reducible1}
 Let $G\subset \mathrm{GL}_{n}(\mathbf k)$ be the difference Galois group of  \eqref{eq: systg}
 over $K$ and let $r$ be an integer with $0<r<n$. 
 The following statements are equivalent.
\begin{itemize}
\item There exists a $G$-submodule of $\mathbf k^n$ \footnote{$\mathbf k^n$ is a $G$-module via $G \subset \Gl_n(\mathbf k)$.} of dimension $r$ over $\mathbf k$.
\item There exists a  difference submodule of dimension $r$ in the $K$-difference module $\cM_A$ associated with 
 \eqref{eq: systg}. 
\item There exists $T \in \Gl_n(K)$ such that
 $$\rho(T)AT^{-1} =\begin{pmatrix}
B_1 &B_2 \\
0 & B_3
\end{pmatrix}$$ with $B_1 \in \Gl_r(K)$.
\end{itemize}
In particular, $G$ is reducible if and only if the statements above hold for some $r$, with $0<r<n$.  
Furthermore, if the matrix $A$ in \eqref{eq: systg} is the companion matrix of a difference operator $\cL$, 
then $\cL$ admits a nontrivial right factor if and only if $G$ is reducible.  
\end{lem}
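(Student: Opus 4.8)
The plan is to prove the equivalence of the three bulleted statements in a cyclic fashion, and then deduce the ``in particular'' and the last sentence about right factors. All three reformulations express the same geometric fact: the system \eqref{eq: systg} has a nontrivial ``sub-system.'' The dictionary is: $G$-submodules of $\mathbf k^n$ $\leftrightarrow$ $K$-difference submodules of $\cM_A$ $\leftrightarrow$ $K$-rational gauge transformations putting $A$ in block upper-triangular form.

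First I would establish the equivalence of the second and third statements, which is purely linear algebra over $K$ and does not use the Galois group. A difference submodule $N\subset \cM_A=(K^n,\Sigma_A)$ of dimension $r$ is by definition a $K$-subspace of dimension $r$ stable under $\Sigma_A$, i.e.\ with $A^{-1}\rho(N)\subset N$, equivalently $\rho(N)\subset AN$. Choosing a $K$-basis of $N$ and completing it to a $K$-basis of $K^n$ gives a matrix $T\in\Gl_n(K)$; stability of $N$ translates exactly into $\rho(T)AT^{-1}$ being block upper-triangular with an $r\times r$ block $B_1$ in the top-left corner, and $B_1\in\Gl_r(K)$ because $A\in\Gl_n(K)$ forces each diagonal block of the conjugate to be invertible. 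Conversely, given such a $T$, the span of the first $r$ columns of $T^{-1}$ (suitably interpreted) is a $\Sigma_A$-stable subspace. This is the routine computation I would not grind through in detail.

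Next I would relate the first statement to the other two via the Picard--Vessiot correspondence. Let $\cQ$ be a Picard--Vessiot extension for \eqref{eq: systg} and $U\in\Gl_n(\cQ)$ a fundamental matrix, so that $G$ is identified with a subgroup of $\Gl_n(\mathbf k)$ acting on $\mathbf k^n$ via $U^{-1}\sigma(U)$. The standard fact here (see \cite[\S 1.4]{vdPS97}) is that the functor $\cM\mapsto \{$solution space of $\cM$ in $\cQ\}$ is an equivalence between the category of $K$-difference modules generated by $\cM_A$ and the category of finite-dimensional $\mathbf k$-representations of $G$ that occur in tensor constructions on $\mathbf k^n$; under this equivalence $K$-difference submodules of $\cM_A$ correspond to $G$-subrepresentations of $\mathbf k^n$, with dimensions matching. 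Concretely, if $V\subset\mathbf k^n$ is a $G$-submodule of dimension $r$, then $UV$ is a $\rho$-stable $\cQ$-subspace which descends (by taking $\rho$-invariants after clearing an appropriate basis, using that $\cQ^\rho=\mathbf k$ and the torsor structure) to a $K$-difference submodule of $\cM_A$; conversely a difference submodule yields, after applying $U^{-1}$, a $G$-stable subspace of $\mathbf k^n$. I would cite the difference-module formalism of \cite[\S 1.4, Theorem 1.13]{vdPS97} rather than redo the descent by hand.

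Finally, the ``in particular'' is immediate: $G$ is reducible, by Definition \ref{def:irreducible}, precisely when $\mathbf k^n$ has a $G$-invariant subspace other than $\{0\}$ and $\mathbf k^n$, i.e.\ exactly when the first statement holds for some $r$ with $0<r<n$. For the last sentence, suppose $A=A_\cL$ is the companion matrix of a monic operator $\cL$ of order $n$; recall from the paragraph preceding the lemma that $\cM_{A_\cL}$ then admits a cyclic vector and that, conversely, difference modules with a cyclic vector correspond to monic operators. A nontrivial right factor $\cL=\cL_1\cL_2$ with $\cL_2$ monic of order $r$, $0<r<n$, gives the difference submodule $\ker\cL_2\subset\cM_{A_\cL}$ of dimension $r$ (equivalently, the quotient module attached to $\cL_2$, via the standard identification of $\cM_{A_\cL}$ with $K[\rho]/K[\rho]\cL$, in which right divisors of $\cL$ correspond to submodules); conversely a difference submodule of $\cM_{A_\cL}$, being again a cyclic difference module over $K$ (any finitely generated difference module over a field with a nonperiodic element has a cyclic vector, by \cite[Theorem B.2]{HenSin}), is isomorphic to $K[\rho]/K[\rho]\cL_2$ for some monic $\cL_2$, and the inclusion of modules forces $\cL_2$ to be a right factor of $\cL$. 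Combining this with the equivalence already proved gives: $\cL$ has a nontrivial right factor $\iff$ $\cM_{A_\cL}$ has a nontrivial difference submodule $\iff$ $G$ is reducible.

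The main obstacle is the faithful bookkeeping in the descent step linking $G$-submodules of $\mathbf k^n$ to $K$-difference submodules of $\cM_A$: one must be careful that the $\cQ$-subspace $UV$ is defined over $K$, which uses that $\cQ/K$ is a torsor under $G$ and that $V$ is $G$-stable, so that the associated idempotent/projector has entries fixed by all of $G$ and hence lies in $K$. This is standard Tannakian/Picard--Vessiot formalism, so I would invoke \cite[\S 1.4]{vdPS97} for it rather than reprove it.
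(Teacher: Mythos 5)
Your proposal is correct and follows essentially the same route as the paper: the Tannakian/Picard--Vessiot correspondence for the equivalence of the first two statements (the paper cites Andr\'e's Tannakian equivalence where you cite \cite[\S 1.4]{vdPS97}), completion of a basis of the submodule for the equivalence with the block upper-triangular form, and the cyclic-vector/operator-factorization dictionary for the statement about right factors (the paper phrases this via the minimal monic annihilator of a cyclic vector rather than via $K[\rho]/K[\rho]\cL$, but the content is the same). The only, harmless, imprecision is in your last step: under the identification of submodules of $K[\rho]/K[\rho]\cL$ with factorizations $\cL=\cL_2\cL_1$, the submodule $K[\rho]\cL_1/K[\rho]\cL$ is isomorphic to the module of the \emph{left} cofactor $\cL_2$ while the right factor $\cL_1$ merely indexes it; since a nontrivial right factor exists if and only if a nontrivial factorization exists, the stated equivalence is unaffected.
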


\begin{proof}
Since the category of $K$-difference modules is a  Tannakian category, the first two statements are 
nothing else than the usual Tannakian equivalence, see \cite[Th\'{e}or\`{e}me 3.2.1.1] {andreens}. 
Completing a $K$-basis of a   difference submodule  $\cM$ in $\cM_A$ 
of dimension $r$, $0<r <n$,  into a $K$-basis $\underline{e}=(e_1,\ldots,e_n)$ of $K^n$, shows that 
the matrix of $\Sigma$ in the basis 
$\underline{e}$ is of the form $B=\begin{pmatrix}
B_1 &B_2 \\ 
0 & B_3  
\end{pmatrix}$,  with $B_1 \in \Gl_r(K)$. Let $T \in \Gl_n(K)$ denote the matrix associated with the change of basis from the canonical basis $\underline{f}=(f_1,\ldots,f_n)$ to $\underline{e}$. Then $$BT\underline{f}=B\underline{e}=\Sigma(\underline{e})=\Sigma(T\underline{f})=\rho(T)A\underline{f}.$$ This proves the 
equivalence between the second and the third statements.

Now, let us assume that $A$ is the companion matrix of some difference operator $\cL$ and that there exists a 
nontrivial  difference submodule $\cN$ of $\cM_A$.  Let $e$ be a cyclic vector of $\cM_A$ such that $\cL $ is 
the minimal monic linear difference operator annihilating $e$. 
Since $\cN$ is a nontrivial difference submodule of $\cM_A$, then $\dim_K \cM_A/\cN <n$ and there 
exists a nontrivial monic linear difference operator $\cL_1$ of order smaller than or equal to $\dim_K \cM_A/\cN$ such that $\cL_1 e \in \cN$. The element $\cL_1 e$ is nonzero by minimality of $\cL$. Since $\cN$ is a difference submodule of $\cM_A$ there exists a monic linear difference operator  $\cL_2$ of order smaller than or equal to  $ \dim _K \cN$ such that 
$\cL_2\cL_1(e)=0$. By minimality of $\cL$, this proves that $\cL=\cL_2 \cL_1$, where $ \cL_1$ has order smaller than $n$. 
Conversely, if there is a nontrivial factorization $\cL=\cL_2 \cL_1$, then the difference module corresponding to $\cL_1$ is a nontrivial difference submodule of $\cM$. Hence $G$ is reducible. 
\end{proof}

\subsection{Going up and down}\label{sec:goingupdown}

In this section, we let $K, F_{0}, \rho$ be defined as in Section \ref{sec:mainresult}, and 
we let $\tK$ be defined as in Section \ref{sec: paramframework}.  
The two following lemmas compare the difference Galois groups over $K$ and over $\tK$. 

We recall that a connected linear algebraic group $G$ is said to be \emph{semisimple} if it is of 
positive dimension and has only trivial abelian normal 
subgroups. 
 
\begin{lem}\label{lem:conservationdifferenceGaloisgroup}
Let $A \in\GL_n(K)$ and let $G$ (resp. $\tG$) be the Galois group of $\rho(Y)=AY$ over $K$ (resp. $\tK$). 
The following properties hold.
 
 \begin{itemize}
 \item In Cases  $\bS_0$, $\bS_\infty$, and $\bQ$, we have $\tG=G(\tC)$. 
 \item In Case $\bM$, if $G$ is semisimple, then $\tG=G(\tC)$.
\end{itemize} 
\end{lem}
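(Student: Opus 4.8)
The plan is to compare the difference Galois groups over $K$ and over $\tK$ by exploiting the fact that $\tK$ is obtained from $K$ by a constant-field extension, namely $\tK = \mathrm{Frac}(K \otimes_{\C} \tC)$ (or its variant with $\log$ in Case $\bM$), where $\tC$ is a $\delta$-closure of the relevant field of constants. The key structural input is that $\tC$ is obtained by extending the constants $\mathbf{k} = K^{\rho} = \C$, so one should expect a base-change statement of the form $\tG = G(\tC)$, i.e. the Galois group over $\tK$ is the set of $\tC$-points of the algebraic group $G$ defined over $\C$. First I would invoke the general principle comparing Picard-Vessiot theory over a field and over a constant-field extension: if $\tC/\C$ is an extension of algebraically closed fields of constants and $\tK = \mathrm{Frac}(K \otimes_{\C} \tC)$ with $\tK^{\rho} = \tC$ (which is exactly what the framework of Section \ref{sec: paramframework} guarantees), then the Picard-Vessiot ring $\widetilde{R}$ over $\tK$ is obtained from the Picard-Vessiot ring $R$ over $K$ by the base change $\widetilde{R} = R \otimes_{\C} \tC$ (or the total quotient ring thereof), and consequently $\tG = G \times_{\C} \tC$ as algebraic groups, giving $\tG = G(\tC)$ on $\tC$-points.

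The main technical obstruction, and the reason Case $\bM$ is singled out, is that this clean base-change behavior hinges on the extension $R \otimes_{\C} \tC$ remaining a \emph{simple} difference ring (equivalently, on $\tK^{\rho}=\tC$, so that no new $\rho$-constants appear). In Cases $\bS_0$, $\bS_\infty$, and $\bQ$, the field $\tK$ is a purely transcendental (or ramified-rational) extension with a well-controlled field of $\rho$-constants, and the cited \cite[Lemma~2.3]{DHR18} directly supplies $\tK^{\rho}=\tC$; from this one deduces that the Picard-Vessiot ring does not acquire new constants and the base-change identity $\tG = G(\tC)$ follows unconditionally. In Case $\bM$, however, the derivation $\delta = \log \times \tr$ requires adjoining the transcendental element $\log$ with $\rho(\log)=p\log$, and the presence of this extra generator obstructs the naive argument; controlling the $\rho$-constants of the enlarged ring $R \otimes_{\C} \tC$ becomes delicate. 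The hard part will be showing that, in Case $\bM$, no new invariant subspaces or constants can appear unless forced by the structure of $G$, and here the hypothesis that $G$ is \emph{semisimple} is exactly what is needed: a semisimple group has no nontrivial characters, so the one-dimensional obstruction associated with the eigenvalue behavior of $\rho$ on $\log$ cannot occur.

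Concretely, I would argue that $\tG$ is always a subgroup of $G(\tC)$, since restriction of $\tK$-automorphisms of $\widetilde{R}$ to the $\C$-span of the solution matrix entries gives an embedding $\tG \hookrightarrow G(\tC)$; the content of the lemma is the reverse inclusion. For the reverse inclusion I would show that $R \otimes_{\C} \tC$ (viewed inside $\widetilde{R}$) is still a Picard-Vessiot ring over $\tK$, which amounts to verifying properties (1)--(3) of Section \ref{sec:PVring}: property (1) holds because the fundamental matrix $U$ over $K$ remains a fundamental matrix over $\tK$; property (2) is immediate; and property (3), simplicity, is the crux. Simplicity follows once one knows $\tK^{\rho} = \tC$, because any $\rho$-ideal would produce new $\rho$-constants. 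In Cases $\bS_0$, $\bS_\infty$, $\bQ$ this is guaranteed by \cite[Lemma~2.3]{DHR18}; in Case $\bM$ the semisimplicity of $G$ allows one to rule out the degeneration caused by the $\log$-extension, because a potential new $\rho$-invariant element would correspond to a character of $G$, and semisimple groups have none.

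Once the base-change identity of Picard-Vessiot rings is established, the equality $\tG = G(\tC)$ of Galois groups is formal: the torsor structure and the dimension formula $\dim_{\mathbf{k}} \Gal(\cQ/K) = \trd_K \cQ$ of Section \ref{subsec: DGT} transfer along the extension, and the algebraic group defining $G$ over $\C$ base-changes to the algebraic group defining $\tG$ over $\tC$. I expect the bulk of the genuine difficulty to reside entirely in the Case $\bM$ simplicity/constants argument, where the interplay between the nonconstant action $\rho(\log)=p\log$ and the semisimplicity hypothesis must be handled carefully; the other three cases should reduce cleanly to the cited \cite[Lemma~2.3]{DHR18} together with standard constant-field-extension results for difference Picard-Vessiot theory.
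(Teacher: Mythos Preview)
Your treatment of Cases $\bS_0$, $\bS_\infty$, and $\bQ$ is correct and matches the paper: $\tK$ is a pure constant-field extension of $K$, and the base-change result for difference Galois groups (the paper cites \cite[Corollary~2.5]{chatzidakis2007definitions}) gives $\tG = G(\tC)$ directly.

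In Case $\bM$, however, you have misidentified the obstruction. You locate the difficulty in establishing simplicity of $R \otimes_{\C} \tC$, claiming this reduces to $\tK^{\rho} = \tC$ and that the latter is what may fail. But $\tK^{\rho} = \tC$ is \emph{already known} in all four cases, including $\bM$ (this is precisely what \cite[Lemma~2.3]{DHR18} gives for the field $\tK = \mathrm{Frac}(K(\log)\otimes_{\C}\tC)$). The actual issue is structural: in Case $\bM$ the field $\tK$ is \emph{not} obtained from $K$ by a mere extension of constants. The passage $K \to \tK$ factors as $K \to K(\log) \to \tK$, where only the second step is a constant extension; the first step adjoins a transcendental element with $\rho(\log)=p\log$, so $K(\log)/K$ is itself a nontrivial Picard-Vessiot extension with Galois group $\mathbf{G}_m$. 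Consequently your proposed object $R\otimes_{\C}\tC$ is not even an algebra over $\tK$, and the argument that its simplicity would follow from $\tK^\rho=\tC$ does not get off the ground.

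The paper's argument instead lets $\mathcal G$ denote the Galois group over the intermediate field $K(\log)$; the constant-extension result gives $\tG = \mathcal G(\tC)$, so one is reduced to comparing $G$ and $\mathcal G$. For this the paper invokes \cite[Proposition~1.6]{DHR18}, which says $\mathcal G$ is normal in $G$ with quotient either trivial or isomorphic to $\mathbf{G}_m$. Semisimplicity then enters exactly here: a semisimple group is perfect, hence admits no nontrivial abelian quotient, forcing $\mathcal G = G$. Your intuition that ``semisimple groups have no characters'' is pointing at the right phenomenon, but the mechanism is a $\mathbf{G}_m$-\emph{quotient} of $G$ arising from the intermediate $\rho$-extension $K(\log)/K$, not a new $\rho$-constant in a tensor-product ring.
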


\begin{proof}
In Cases $\textbf{S}_{0}$, $\textbf{S}_{\infty}$, and \textbf{Q}, the field extension $\tK$  
of $K$ is obtained by extension of the field of constants $\C$ to $\tC$. 
Then, \cite[Corollary~2.5]{chatzidakis2007definitions} gives that the difference 
Galois group over $\tK$ is just the extension of $G$ to the $\tC$-points.

Let us deal with Case \textbf{M}. Let $\mathcal{G}$ denote the difference Galois group over $K(\log)$. As previously, 
we infer from \cite[Corollary~2.5]{chatzidakis2007definitions} that 
$\tG=\mathcal{G}(\widetilde{C})$. Thus, it remains to prove that $\mathcal{G}=G$ when $G$ is semisimple.  
By \cite[Proposition 1.6]{DHR18}, we can see $\mathcal{G}$ as a subgroup of $G$ 
such that  one of the following two situations occurs.  
\begin{itemize}
\item[(1)] $\mathcal{G}=G$. 
\item[(2)] $\mathcal{G}$ is a normal subgroup of $G$ and $G/\mathcal{G}\simeq  \mathbf{G}_m$ the multiplicative group $(\C^*, .)$. 
\end{itemize}
By \cite[Corollary 21.50]{milne2017algebraic}, a semisimple algebraic group is perfect, that is equal to its derived subgroup, see \cite[Definition 12.45]{milne2017algebraic}. Thus, a semisimple algebraic group has no nontrivial abelian  quotient. Hence, (2) 
cannot occur. We deduce that $G=\mathcal{G}$, which ends the proof. 
\end{proof}

\begin{lem}\label{lem: irreducible} Let $A \in \GL_n(K)$ and let $G$ (resp.\ $\widetilde{G}$) be the Galois group of $\rho(Y)=AY$ over $K$ (resp.\ $\tK$). 
The group $G$ is irreducible if and only if the group $\tG$ is irreducible.
\end{lem}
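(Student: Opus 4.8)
The plan is to reduce the statement to the purely group-theoretic fact that, for a linear algebraic group, irreducibility of the action on $\mathbf k^n$ is detected by the existence of invariant subspaces, and that extension of the base field from $\mathbf k=\C$ (or $C_h$ in Case $\bS_0$) to $\widetilde C$ neither creates nor destroys such subspaces. First I would invoke Lemma \ref{lem: reducible1}, which characterizes reducibility of the difference Galois group of $\rho(Y)=AY$ over a field $K$ in terms of the existence of a gauge transformation $T\in\GL_n(K)$ conjugating $A$ to block upper-triangular form with a proper nonzero left-upper block. Thus it suffices to show: there exists $T\in\GL_n(K)$ putting $A$ in block-triangular shape (with a block of some size $r$, $0<r<n$) if and only if there exists such a $T$ in $\GL_n(\tK)$. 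The direction from $K$ to $\tK$ is trivial since $K\subset\tK$. For the converse, I would use the explicit description of $\tK$ from Section \ref{sec: paramframework}: in Cases $\bS_0$, $\bS_\infty$, $\bQ$ one has $\tK=\mathrm{Frac}(K\otimes_\C\widetilde C)$, and in Case $\bM$ one has $\tK=\mathrm{Frac}(K(\log)\otimes_\C\widetilde C)$ with $\rho(\log)=p\log$.

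The key step is a descent argument for the existence of a block-triangularizing gauge transformation. Equivalently, by the Tannakian part of Lemma \ref{lem: reducible1}, a reducible difference module over $\tK$ possesses a proper nonzero difference submodule, and I want to descend such a submodule to $K$. I would argue as follows. By Lemma \ref{lem:conservationdifferenceGaloisgroup}, in Cases $\bS_0$, $\bS_\infty$, $\bQ$ we have $\tG=G(\tC)$, i.e.\ $\tG$ is simply the base change of the algebraic group $G$ from $\mathbf k$ to $\tC$; and the $\tC$-vector space $\tC^n$ decomposes as $\mathbf k^n\otimes_{\mathbf k}\tC$ with $\tG$ acting by base change of the $G$-action. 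A standard fact about algebraic group representations over a field then gives that $\tG=G_{\tC}$ has an invariant subspace of dimension $r$ in $\tC^n$ if and only if $G$ has one of dimension $r$ in $\mathbf k^n$: indeed, the existence of an $r$-dimensional subrepresentation is equivalent to the non-vanishing of a certain Hom-space, or to the representability of a subfunctor, which is insensitive to base field extension (here one uses that $\mathbf k$ is algebraically closed, or that $\widetilde C$ is a faithfully flat $\mathbf k$-algebra, so a $G$-invariant subspace of $\mathbf k^n$ and its base change correspond). Combining with Lemma \ref{lem: reducible1} applied over both $K$ and $\tK$ yields the equivalence in these three cases.

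For Case $\bM$ one cannot directly apply Lemma \ref{lem:conservationdifferenceGaloisgroup} since there the equality $\tG=G(\tC)$ is only asserted under the hypothesis that $G$ is semisimple — and a semisimple group is automatically irreducible-or-not in a way we are precisely trying to detect, so this would be circular. Instead I would factor the extension $K\hookrightarrow\tK$ through $K(\log)$: first pass from $K$ to $K(\log)$, then from $K(\log)$ to $\tK=\mathrm{Frac}(K(\log)\otimes_\C\widetilde C)$. The second step is again pure extension of constants from $\C$ to $\widetilde C$, handled exactly as above via \cite[Corollary~2.5]{chatzidakis2007definitions}. For the first step I would argue directly at the level of difference modules using the cyclic vector / block-triangularization criterion of Lemma \ref{lem: reducible1}: if $\rho(T)AT^{-1}$ is block-triangular for some $T\in\GL_n(K(\log))$, then, writing the entries of $T$ as Laurent polynomials in $\log$ over $K$ (recall $\log$ is transcendental over $K$ and $\rho$ acts on it by $\log\mapsto p\log$, scaling monomials $\log^j$ by $p^j$), one extracts a block-triangularizing gauge transformation with entries in $K$ by a graded-component / leading-term argument — the invariant flag descends because $\rho$ respects the $K$-linear grading of $K[\log,\log^{-1}]$ by powers of $\log$. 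The main obstacle I anticipate is precisely this descent through $K(\log)$: making the leading-coefficient extraction robust (one must check that the relevant leading coefficient of $T$ is still invertible over $K$, i.e.\ still defines an element of $\GL_n(K)$, and that block-triangularity is preserved when passing to that component), though this is morally the same bookkeeping that underlies \cite[Proposition 1.6]{DHR18} and should go through cleanly. Once both steps are in place, chaining the two equivalences gives that $G$ over $K$ is irreducible iff $\tG$ over $\tK$ is, completing the proof.
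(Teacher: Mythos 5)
Your treatment of Cases $\bS_0$, $\bS_\infty$, and $\bQ$ matches the paper exactly: Lemma \ref{lem:conservationdifferenceGaloisgroup} reduces everything to the fact that, over the algebraically closed field $\mathbf k$, irreducibility of a linear group action is preserved and reflected by the constant field extension $\mathbf k\subset\tC$ (absolute irreducibility); the paper simply says the result ``follows directly,'' and you spell out why. Your plan for Case $\bM$ also has the right architecture --- factor $K\hookrightarrow\tK$ through $K(\log)$, handle the second step by \cite[Corollary~2.5]{chatzidakis2007definitions}, and descend reducibility through $K(\log)$ using the transcendence of $\log$ and the relation $\rho(\log)=p\log$.

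The one place where your execution would not close as written is the descent step itself. You propose to extract a leading $\log$-component from a block-triangularizing gauge matrix $T\in\GL_n(K(\log))$, and you correctly flag the obstacle: the leading (or trailing) coefficient of $T$ in the $\log$-grading need only be a nonzero matrix over $K$, not an invertible one, so it need not define a gauge transformation; moreover $B=\rho(T)AT^{-1}$ has entries that are genuinely rational in $\log$, so matching graded components in $\rho(T)A=BT$ is not a clean bookkeeping exercise --- leading terms can multiply to zero. The paper sidesteps this entirely by first invoking the cyclic vector lemma to replace the system by a single operator $\cL$ over $K$, and then descending a \emph{factorization} $\alpha\cL=\bigl(\sum_\kappa\log^\kappa\cL_\kappa\bigr)\bigl(\sum_j\log^j\cD_j\bigr)$ rather than a gauge matrix: since the twisted polynomial ring $K[\rho]$ is a domain, the product of the two extremal nonzero components $\cL_{\kappa_0}\cD_{j_0}$ cannot vanish, and comparing the extremal terms in $\log$ (using $\rho\log=p\log\rho$ to rescale coefficients by powers of $p$) yields a nontrivial factorization of $\cL$ over $K$, hence reducibility of $G$ by Lemma \ref{lem: reducible1}. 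You should route your Case $\bM$ argument through this operator formulation; at the matrix level the leading-term extraction does not obviously go through, and that is precisely the step your proposal leaves open.
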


\begin{proof}
 In Cases $\bS_0$, $\textbf{S}_{\infty}$, and \textbf{Q}, Lemma \ref{lem:conservationdifferenceGaloisgroup} 
 ensures that $\tG=G(\widetilde{C})$ and the result follows directly.

Let us deal with Case \textbf{M}. 
We infer from \cite[Corollary~2.5]{chatzidakis2007definitions}  that 
$\tG=\mathcal{G}(\widetilde{C})$, where we let $\mathcal{G}$ denote the Galois group of the system over $K(\log)$. 
Thus, it remains to prove that $\mathcal{G}$ is irreducible if and only if 
$G$ is irreducible.  
As seen in the proof of Lemma~\ref{lem:conservationdifferenceGaloisgroup},  $\mathcal{G}$ is a subgroup of $G$. 
Thus,  
 $\mathcal{G}$ is reducible as soon as $G$ is reducible. Conversely, let us assume that $\mathcal{G}$ is reducible. 
By the cyclic vector lemma over $K$, the linear difference system \eqref{eq: systg} is equivalent to an equation 
$$
\mathcal{L}:=\rho^{n}+a_{n-1}\rho^{n-1}+\dots+a_0=0 \,,
$$ with $a_i\in K$ and $a_0\neq 0$. 
Since $\mathcal{G}$ is reducible, Lemma \ref{lem: reducible1} gives that $\mathcal{L}$ admits a factorization 
over $K(\log)$. Multiplying $\mathcal{L} $ by some nonzero element  if necessary, we can get rid of 
the denominators. Then, there exists $\alpha \in K[\log]$ such that  
$$
\alpha \mathcal{L}  =(b_k \rho^{k}+b_{k-1}\rho^{k-1}+\dots+b_0)(c_{n-k} \rho^{n-k}+c_{n-k-1}\rho^{n-k-1}+\dots+c_0) \,,
$$
where $b_i, c_i\in K[\log]$, $b_k b_0 c_{n-k} c_0\neq 0$, and $0<k<n$.  
 Rewriting the equation in terms of powers of log, one finds 
\begin{eqnarray*}
\sum_{ i\geq i_0} \log^i (\alpha_i \cL) & =& \left(\sum_{\kappa \geq \kappa_0} \log^\kappa \cL_\kappa \right) \left(\sum_{j\geq j_0} \log^j \cD_j \right) \,,
\end{eqnarray*}
where $\cL_\kappa, \cD_j$ are linear difference operators over $K$, $\alpha_{i} \in K$,  $\alpha_{i_0} \in K^*$,
and $\cL_{\kappa_0}:=\sum_{r=0}^t \gamma_r \rho^r$ and $\cD_{j_0}$ are nonzero. Using the fact that $\log$ is transcendental over $K$ and $\rho \log =p \log \rho$, we deduce that $i_0= \kappa_0+j_0$. 
Considering the terms in $\log^{i_{0}}$ in both sides of the factorization, we deduce that $\cL =\frac{1}{\alpha_{i_0}} \cM\cD_{j_0}$  where $\cM=\sum_{r=0}^t \gamma_rp^{j_0 r} \rho^r$, leading to a nontrivial factorization of $\cL$ over 
$K$. By Lemma \ref{lem: reducible1}, we obtain that $G$ is reducible, as wanted. 
\end{proof}

Now, we prove the following descent lemma.

\begin{lem}\label{lem:intersectionF0tK}
For $F_0,K$ and $\tK$ defined as in Section \ref{sec: paramframework}, we have 
$$F_0 \cap \tK =K.$$
\end{lem}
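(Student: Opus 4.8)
The plan is to prove the statement case by case, since the structure of $\tK$ differs in each of the four cases, and to reduce everything to the elementary fact that $F_0$ and $\tK$ are both built over $K$ but in ``orthogonal'' directions: $\tK$ is obtained by enlarging the constant field $\C$ to $\tC$ (and, in Case $\textbf{M}$, by additionally adjoining the transcendental $\log$), while $F_0$ consists of genuine analytic/formal objects over $K$ whose new elements remain transcendental over these constant extensions. The inclusion $K \subset F_0 \cap \tK$ is immediate from $K \subset F_0$ and $K \subset \tK$, so the entire content is the reverse inclusion $F_0 \cap \tK \subset K$.

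For Cases $\textbf{S}_\infty$, $\textbf{Q}$, and $\textbf{M}$, I would first handle the constant-extension part. Here $\tC = \widetilde{\C}$ is a $\delta$-closure of $\C$, and $\tK = \widetilde{\C}(x)$, $\widetilde{\C}(x^{1/*})$, or $\widetilde{\C}(x^{1/*})(\log)$ respectively. The key structural observation is that $\widetilde{\C}$ is linearly disjoint from $F_0$ over $\C$: an element of $F_0$ is a (ramified) Laurent or Puiseux series with coefficients in $\C$, and the constants of $\widetilde{\C}$ that are not in $\C$ are transcendental over $\C$, hence cannot appear as coefficients of an element of $F_0$. More precisely, I would argue that $\tK = \mathrm{Frac}(K \otimes_\C \tC)$ and that $F_0 \otimes_\C \tC$ embeds into $F \otimes_\C \tC$ with $F_0 \cap \mathrm{Frac}(K \otimes_\C \tC) = K$; this follows because a nonzero element of $\tK$ lying in $F_0$ can be written using finitely many elements $\lambda_1,\dots,\lambda_m$ of $\tC$ that are algebraically independent over $\C$ (after choosing a transcendence basis), and then comparing Laurent/Puiseux coefficients forces all contributions outside $K$ to vanish. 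The cleanest way is to invoke that $K$ and $F_0$ are both $\C$-subalgebras of $F$, that $F \cap \tK$-type computations respect the grading by powers of $x$, and that the new constants are transcendental.

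Case $\textbf{M}$ requires an extra step because of the adjoined $\log$. After reducing modulo the constant extension as above, I must show that $F_0 = \C((x^{1/*}))$ does not meet $K(\log)$ outside $K$. Since $\log$ is transcendental over $K$ and over $F_0$ (it is not a Puiseux series — it grows like the logarithm, or formally, it satisfies $\rho(\log) = p\log$ while every nonzero element of $\C((x^{1/*}))$ has a well-defined leading exponent incompatible with this scaling relation), any element of $K(\log) \cap F_0$ must be a rational function in $\log$ over $K$ whose value lands in $F_0$; writing it as $P(\log)/Q(\log)$ with $P,Q \in K[\log]$ coprime and comparing with a Puiseux expansion forces the $\log$-degree to be zero, so the element lies in $K$. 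I would make this rigorous by noting $F_0$ is a $\rho$-field in which $\log$ has no counterpart: if $g \in K(\log) \cap F_0$ had positive degree in $\log$, applying $\rho$ and using $\rho(\log)=p\log$ would produce a polynomial identity in $\log$ over $K$ that cannot be matched by the $\rho$-action on a Puiseux series.

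The main obstacle is Case $\textbf{S}_0$, where $\tK = \widetilde{C_h}(x)$ and $F_0$ is an \emph{arbitrary} field extension of $\C(x)$ inside $\mathcal{M}er(\C)$ satisfying conditions (i)--(iii) of Section~\ref{sec:mainresult}, rather than a concrete Laurent/Puiseux series field. Here I cannot simply compare series coefficients, so I would lean on the defining properties: condition (ii) says $F_0^\rho = \C$, and $\widetilde{C_h}$ consists (over its $\C$-core) of $\delta$-transcendental $h$-periodic functions adjoined via the $\delta$-closure. The strategy is to take $g \in F_0 \cap \widetilde{C_h}(x)$ and write it as a rational function in $x$ with coefficients in $\widetilde{C_h}$; the coefficients being $h$-periodic (or in the $\delta$-closure thereof) and $g$ lying in $F_0 \subset \mathcal{M}er(\C)$ with $F_0 \cap K_0 = \C(x)$ should force, via a transcendence/linear-disjointness argument over $\C(x)$, that only $\C$-coefficients survive, giving $g \in \C(x) = K$. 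I expect the delicate point to be justifying the linear disjointness of $\widetilde{C_h}$ from $F_0$ over $\C$ without an explicit coefficient comparison — likely this is where a prior result of \cite{DHR18} on the structure of $\tK = \mathrm{Frac}(K \otimes_\C \widetilde{C})$ and the fact that the new periodic constants are transcendental over $F_0$ must be invoked, so I would first isolate this disjointness as the crux and reduce all four cases to it.
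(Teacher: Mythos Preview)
Your approach for Cases $\bS_\infty$, $\bQ$, and $\bM$ is essentially correct and runs parallel to the paper's, though the paper phrases the key step via Kronecker--Hankel determinants rather than linear disjointness: a formal power series with $\C$-coefficients that is rational over an extension field $\tC$ must already be rational over $\C$, since the Hankel-determinant rationality criterion does not depend on the coefficient field. Your linear-algebra version (the relation $P(x)a=Q(x)$ becomes a homogeneous linear system in the coefficients of $P,Q$ with entries in $\C$ that has a nonzero $\tC$-solution, hence a nonzero $\C$-solution) amounts to the same thing, and your handling of $\log$ in Case $\bM$ matches the paper's.

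The genuine gap is Case $\bS_0$. You try to establish linear disjointness of $\widetilde{C_h}$ from $F_0$ over $\C$ by leaning on conditions (ii) and (iii), but you do not prove it, and this route is harder than necessary: $\widetilde{C_h}$ is an abstract $\delta$-closure, not a subfield of $\mathcal{M}er(\C)$, and $F_0$ is essentially arbitrary subject to (i)--(iii), so such a disjointness statement has no obvious proof. The paper's resolution uses only condition (i) and reduces $\bS_0$ to the same argument as the other cases: since $a \in F_0 \subset \mathcal{M}er(\C)$, pick $x_0\in\C$ at which $a$ is holomorphic; then $a \in \C[[x-x_0]]$ is a power series with \emph{complex} coefficients that also lies in $\tC(x)$, and the Hankel-determinant (equivalently, your linear-system) argument gives $a \in \C(x)=K$. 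Conditions (ii) and (iii) play no role in this lemma.
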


\begin{proof}
Let us first consider Case $ \textbf{Q}$. Let $a \in F_0 \cap \tC(x^{1/*})$. 
There exists a positive integer $r$ such that $a \in \C((x^{1/r})) \cap \tC(x^{1/r})$.  
It follows that there exists  a positive integer $N$ such that $ x^N a \in \C[[ x^{1/r}]]$. 
Thus,  $x^Na$ is a formal power series in $x^{1/r}$ with complex coefficients that represents  a rational fraction  with coefficients in $\tC$.  This property can be characterized by the vanishing of its associated Kronecker-Hankel  determinants (see, for instance, \cite[p.~5]{Salem}). This condition does not depend on the field of coefficients, so 
we deduce that $x^Na \in \C(x^{1/r})$. Hence, $a\in K$.   

The proof of Case $\bS_\infty$ is entirely similar.  For Case $\bM$,  
if $a \in \C((x^{1/*})) \cap \tC(x^{1/*})(\log)$, we use the fact that the logarithm is transcendental over $\C ((x^{1/*}))$ to conclude that  $a \in \C((x^{1/*})) \cap \tC(x^{1/*})$. Arguing as above, we can show that $a \in K$.  

Now, let us deal with Case $\textbf{S}_0$. Let $a \in F_0 \cap \tC(x)$. Since $F_0 \subset \cM er(\C)$, 
one can find $x_0 \in \C$ such that $a \in \C[[x-x_0]]$. 
Once again, using Kronecker-Hankel determinants, one can deduce that $a \in K$.
\end{proof}

\subsection{Iterating the difference operator}

For every positive integer 
 $\ell$, let us set $${A_{[\ell]}:=\rho^{\ell-1}(A)\times\dots\times A\in \mathrm{GL}_{n}(K)}\,.$$ 
Note that if $U$ is a fundamental matrix of  $\rho (Y)=AY $, then it is also a fundamental matrix of   
$\rho^{\ell}(Y)=A_{[\ell]} Y$. In other words, a vector solution to the system $\rho (Y)=AY$ is also a solution to the 
iterated system ${\rho^{\ell}(Y)=A_{[\ell]} Y}$. 
In this section, we show that, replacing the original system by a suitable iteration, we can  
reduce the situation to the more convenient case where the corresponding Galois group is connected and the 
Picard-Vessiot extension is a field. 

This iteration process might introduce
some new constants. The following lemma shows that  this cannot happen when the fields of $\rho$-constants is algebraically closed. 

\begin{lem}\label{lem:constantiterate}
 Let $L$ be a $\rho$-field such that $L^\rho=\mathbf{k}$, and $r$ be a positive integer. 
 Then, any $\rho^r$-constant of $L$ 
 is algebraic of degree less than or equal to $r$ over $\mathbf{k}$. 
 In particular, if $\mathbf{k}$ is algebraically closed then $L^{\rho^r}=\mathbf{k}$.
\end{lem}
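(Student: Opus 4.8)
The plan is to exploit the fact that $\mathbf{k} = L^\rho$ is fixed pointwise by $\rho$, so that on the $\mathbf{k}$-vector space $L$ the operator $\rho$ is $\mathbf{k}$-linear and $\rho^r$-constants form a $\mathbf{k}$-subspace stable under $\rho$. Concretely, let $a \in L^{\rho^r}$. I would consider the finite orbit $a, \rho(a), \rho^2(a), \dots, \rho^{r-1}(a)$; because $\rho^r(a) = a$, the $\mathbf{k}$-linear span $V$ of these elements inside $L$ has $\dim_{\mathbf{k}} V \le r$ and is stable under $\rho$. The restriction $\rho|_V$ is then a $\mathbf{k}$-linear endomorphism of a finite-dimensional space, and since $\rho$ is invertible it is an automorphism of $V$; moreover $(\rho|_V)^r = \mathrm{Id}_V$.

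The key step is to run an elementary Lagrange-interpolation / symmetric-function argument to produce a polynomial over $\mathbf{k}$ killing $a$. The natural device: set $b_j := \rho^j(a)$ for $0 \le j \le r-1$ (these are the full orbit since $\rho^r(a)=a$), and form the monic polynomial
$$
P(X) := \prod_{j=0}^{r-1}(X - b_j) \in L[X].
$$
Applying $\rho$ to the coefficients permutes the roots $b_j$ cyclically (because $\rho(b_j) = b_{j+1 \bmod r}$), hence $\rho$ fixes each coefficient of $P$; that is, $P \in \mathbf{k}[X]$. Since $a = b_0$ is a root of $P$, the element $a$ is algebraic over $\mathbf{k}$ of degree at most $r$. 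For the "in particular" clause, if $\mathbf{k}$ is algebraically closed then any element of $L$ algebraic over $\mathbf{k}$ already lies in $\mathbf{k}$, so $L^{\rho^r} = \mathbf{k}$; the reverse inclusion $\mathbf{k} \subset L^{\rho^r}$ is immediate since $\rho$ fixes $\mathbf{k}$.

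One small point requires care, and it is really the only thing to watch: the coefficients of $P$ are \emph{a priori} elements of $L$, and one must genuinely know $L$ is a field (not merely a ring) so that "root of a monic polynomial over $\mathbf{k}$" translates into "algebraic over $\mathbf{k}$" in the usual sense, and so that $\rho$ being an automorphism of $L$ makes the computation $\rho(P) = P$ legitimate coefficient-by-coefficient. Both hold by hypothesis ($L$ is a $\rho$-field). I expect no serious obstacle here; the argument is short. An alternative packaging, should one prefer to avoid explicitly writing down $P$, is to note that $\mathbf{k}[a]$ is a $\rho$-stable $\mathbf{k}$-subalgebra on which $\rho$ acts with finite order dividing $r$, hence $\mathbf{k}[a]$ is spanned over $\mathbf{k}$ by $a, \rho(a), \dots, \rho^{r-1}(a)$ and is therefore finite-dimensional over $\mathbf{k}$ of dimension at most $r$, forcing $a$ to be algebraic of degree $\le r$; the symmetric-function presentation above is simply the cleanest way to make the bound explicit.
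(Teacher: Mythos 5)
Your proof is correct and coincides with the paper's own argument: the paper forms exactly the same polynomial $P(X)=(X-a)\cdots(X-\rho^{r-1}(a))$ and observes that its coefficients lie in $L^{\rho}=\mathbf{k}$. The surrounding remarks about linear spans and the alternative packaging are fine but not needed; the core step is identical.
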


\begin{proof} 
Let $a \in L$ be a $\rho^r$-constant. Then the polynomial $P(X)=(X-a)\hdots (X- \rho^{r-1}(a))$ 
belongs to $L^{\rho}[X]=\mathbf{k}[X]$. This proves that  $a$ is algebraic over $\mathbf{k}$  of degree 
less than or equal to $r$. 
\end{proof}

One of the main  difficulty of difference algebra is the control of the algebraic  difference field extensions. This is   the source of many technical difficulties
and connected to the fact that  the theory ACFA is unstable (see \cite{ACFA}).
The difference fields $(K,\rho)$ introduced in Section \ref{sec:mainresult}  have the following very  strong property with respect to finite difference fields extensions, which will allow us to bypass these difficulties.
 
\begin{lem}\label{lem: C1} 
Let $r$ be a positive integer. 
In each of Cases $\textbf{S}_{0}$, $\textbf{S}_{\infty}$, $\textbf{Q}$, and $\textbf{M}$,  
there are no proper $\rho^r$-field extension of $K$ of finite degree. 
\end{lem}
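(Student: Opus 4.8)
The plan is to prove the statement for $r=1$ only, since $\rho^{r}$ is again an operator of exactly the same kind: in Cases $\textbf{S}_0,\textbf{S}_\infty$ it is the shift $x\mapsto x+rh$, in Case $\textbf{Q}$ the $q^{r}$-difference operator (and $q^{r}$ is not a root of unity), and in Case $\textbf{M}$ the Mahler operator $x\mapsto x^{p^{r}}$ with $p^{r}\geq 2$. Thus each $\rho^{r}$ falls under one of the four cases, and the $r=1$ statement applied to $\rho^{r}$ gives the general one. So let $L/K$ be a finite $\rho$-field extension; since we are in characteristic zero, $L=K(\alpha)$ for a primitive element $\alpha$, and it suffices to show $\alpha\in K$. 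Note that in every case $K$ is algebraic over $\C(x)$ (equal to it in the $\textbf{S}$ cases, and $\C(x^{1/*})/\C(x)$ is algebraic in Cases $\textbf{Q}$, $\textbf{M}$), so $\alpha\in\overline{\C(x)}$ and we may attach to it its \emph{branch locus} $S(\alpha)\subset\mathbb{P}^1(\C)$, the finite set of points of $\mathbb{P}^1$ over which the cover associated with $\C(x)\subset\C(x)(\alpha)$ ramifies.

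The two classical inputs I would use are: (a) in Cases $\textbf{Q}$ and $\textbf{M}$, the field $\C(x^{1/*})$ is the maximal algebraic extension of $\C(x)$ that is unramified outside $\{0,\infty\}$ (because the connected covers of $\mathbb{G}_m$ are the power maps $z\mapsto z^{n}$, i.e. $\pi_1(\mathbb{G}_m)=\hat{\mathbb Z}$), so that $S(\alpha)\subseteq\{0,\infty\}$ forces $\alpha\in K$; and (b) in the $\textbf{S}$ cases, a connected cover of $\mathbb{P}^1$ branched over at most one point is trivial (as $\mathbb{A}^1$ is simply connected), so $S(\alpha)\subseteq\{\infty\}$ forces $\alpha\in\C(x)=K$. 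The mechanism that controls $S(\alpha)$ is the difference equation itself. Writing $\rho(\alpha)=R(\alpha)$ with $R\in K(T)$, the coefficients of $R$ lie in $K$ and hence have branch locus contained in $\{0,\infty\}$ (empty in the $\textbf{S}$ cases), so $\C(x)(\rho(\alpha))\subseteq\C(x)(\alpha,\text{coeffs of }R)$ gives $S(\rho(\alpha))\setminus\{0,\infty\}\subseteq S(\alpha)\setminus\{0,\infty\}$. On the other hand $\rho$ restricts on $\C(x)$ to the pullback $\phi^{*}$ along the self-map $\phi$ of $\mathbb{P}^1$ given by $z\mapsto z+h$, $z\mapsto qz$, or $z\mapsto z^{p}$; comparing the cover of $\alpha$ with its base change along $\phi$, and using that $\phi$ is \'etale away from its own ramification $\{0,\infty\}$, I get $S(\rho(\alpha))\setminus\{0,\infty\}=\phi^{-1}(S(\alpha))\setminus\{0,\infty\}$. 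Combining the two inclusions shows that $S(\alpha)$, away from the fixed points of $\phi$, is stable under $\phi^{-1}$.

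It remains to see that such a finite $\phi^{-1}$-stable set can only sit inside the fixed points $\mathrm{Fix}(\phi)$, which is $\{\infty\}$ for the shift and $\{0,\infty\}$ for the $q$-difference and Mahler operators. For the shift this is immediate: $\phi^{-1}$ is translation by $-h$, whose orbits on $\C$ are infinite, so $S(\alpha)\cap\C=\varnothing$ and $S(\alpha)\subseteq\{\infty\}$. For $\textbf{Q}$ it is just as immediate: $\phi^{-1}$ is multiplication by $q^{-1}$, which has infinite orbits on $\C^{*}$ because $q$ is not a root of unity, so $S(\alpha)\subseteq\{0,\infty\}$. The genuinely delicate case, and the one I expect to be the main obstacle, is $\textbf{M}$: here the forward map $z\mapsto z^{p}$ has \emph{infinitely many periodic points} (all roots of unity), so forward-invariance of the branch locus would be useless. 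The point is that the correct invariance is under $\phi^{-1}$, i.e. under taking \emph{all} $p$-th roots, and the backward orbit of any $c\in\C^{*}$ is infinite — even for a root of unity, since repeatedly extracting $p$-th roots produces roots of unity of order multiplied by $p$ at each step. Hence a nonempty finite set stable under all $p$-th roots is impossible, giving again $S(\alpha)\subseteq\{0,\infty\}$. In all cases one then concludes $\alpha\in K$ by input (a) or (b), so $L=K$. The steps requiring the most care are the base-change identity for branch loci precisely at the points where $\phi$ itself ramifies (which is why $\{0,\infty\}$ must be excised throughout), and, in Case $\textbf{M}$, making sure the argument is run at the level of the branch locus over $\C(x)$ rather than over $K$, since over $K$ the $\rho$-fixed places lying above roots of unity would otherwise obscure the contradiction.
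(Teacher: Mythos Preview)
The paper disposes of this lemma entirely by citation to \cite{hendriks1997algorithm,hendriks1998algorithm,Ro18}, so your ramification argument is a genuinely different, self-contained route. It is clean and correct in Cases $\textbf{S}_0$, $\textbf{S}_\infty$, and $\textbf{Q}$: there $\phi$ is an \emph{automorphism} of $\mathbb{P}^1$, hence $\phi^*$ is an automorphism of $\C(x)$, the minimal polynomial of $\rho(\alpha)$ over $\C(x)$ is exactly $P^{\rho}$, and your base-change identity $S(\rho(\alpha))\setminus\{0,\infty\}=\phi^{-1}(S(\alpha))\setminus\{0,\infty\}$ follows at once.

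In Case $\textbf{M}$, however, step (ii) is not justified as written, and the difficulty is not the one you flag. Here $\phi:z\mapsto z^{p}$ is \emph{not} an automorphism of $\mathbb{P}^1$; correspondingly $\rho$ restricted to $\C(x)$ is only the injection $f(x)\mapsto f(x^{p})$ with image $\C(x^{p})\subsetneq\C(x)$. Thus $P(x^{p},T)$ can be reducible over $\C(x)$, and the smooth projective model of $\C(x)(\rho(\alpha))$ is then merely one irreducible component of the base change $C\times_{\mathbb{P}^1,\phi}\mathbb{P}^1$, not the whole of it. There is no a priori reason that this single component sees all of $\phi^{-1}(S(\alpha))$ as its branch locus. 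What you do get for free is the equality $S(\rho(\alpha))\setminus\{0,\infty\}=S(\alpha)\setminus\{0,\infty\}$ (since $K(\alpha)=K(\rho(\alpha))$ gives both directions of (i)) together with the easy inclusion $S(\rho(\alpha))\cap\C^{*}\subseteq\phi^{-1}(S(\alpha))$; combined, these only say that $S(\alpha)\cap\C^{*}$ is stable under $c\mapsto c^{p}$, which allows nonempty finite sets such as $\{1\}$. So the roots-of-unity obstruction you warn about re-enters through this back door. The gap is not fatal --- one can close it, e.g., by exploiting the transitive $\mu_{p}$-action on the components of the base change and a cardinality count, or more simply by running the whole argument for $\rho^{m}$ with $p^{m}>[\C(x)(\alpha):\C(x)]$ so that the base change is forced to have a component of branch locus strictly larger than $S(\alpha)$ --- but this is genuine extra work your sketch does not supply.
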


\begin{proof}
The results in \cite{hendriks1997algorithm, 
 hendriks1998algorithm,Ro18} cover all 
 cases.  
\end{proof}

We are now ready to prove the following result. 

 \begin{propo}\label{prop: diffGaloisconnexe}
Let $(K,\rho)$ be defined as in Section \ref{sec:mainresult} and let $L$ be a $\rho$-field extension of $K$ 
such that $L^\rho=\C$.  Let  $(f_1, \dots , f_n)^{\top}\in L^{n}$ be a nonzero solution to \eqref{eq: systg}. 
Let $G$ be the difference Galois group of \eqref{eq: systg} over $K$. There exists a positive integer 
$r$ such that  the following properties hold. 
 
\begin{trivlist}
\item[(a)] There exists a
 Picard-Vessiot field  extension $\mathcal Q$ for $\rho^{r}(Y)=A_{[r]}Y$ over $(K,\rho^r)$, with a fundamental  solution  matrix $U$ 
 having $(f_1, \dots , f_n)^{\top}$ as first column.
 
 \item[(b)]  The difference Galois group $G_r:=\Gal(\mathcal Q/K)$   of $\rho^{r}(Y)=A_{[r]}Y$ over $(K,\rho^r)$ 
  coincides with the identity component of $G$ and is therefore connected. 
\end{trivlist}
 \end{propo}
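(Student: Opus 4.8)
The plan is to first realize the given solution $f=(f_1,\dots,f_n)^\top$ as the first column of a fundamental matrix of a Picard–Vessiot ring over $K$, and then to pass to a suitable iterate in order to force the Galois group to be connected. Since $K^\rho=\C$ is algebraically closed, there exists a Picard–Vessiot extension for $\rho(Y)=AY$ over $K$; fixing a fundamental matrix $\mathcal U_0$ inside it and setting $R:=K[\mathcal U_0,\det(\mathcal U_0)^{-1}]$, Proposition \ref{prop:PvringPvext} shows that $R$ is a Picard–Vessiot ring, and $R^\rho=\C$ because $R$ sits inside a Picard–Vessiot extension.

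To place $f$ inside this picture I would compare it with $\mathcal U_0$ in the difference ring $L\otimes_K R$ equipped with the diagonal automorphism $\rho$. A direct computation gives $\rho(\mathcal U_0^{-1}(f\otimes 1))=\rho(\mathcal U_0)^{-1}\rho(f\otimes 1)=(A\mathcal U_0)^{-1}A(f\otimes 1)=\mathcal U_0^{-1}(f\otimes 1)$, so the entries of $\mathcal U_0^{-1}(f\otimes 1)$ are $\rho$-constants. Using that $L^\rho=\C$ and $R^\rho=\C$ with $\C$ algebraically closed, the ring of $\rho$-constants of $L\otimes_K R$ is still $\C$; hence $\mathcal U_0^{-1}(f\otimes 1)=c$ for some nonzero $c\in\C^n$, that is $f=\mathcal U_0\,c$. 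Choosing $P\in\GL_n(\C)$ with first column $c$, the matrix $\mathcal U:=\mathcal U_0P\in\GL_n(R)$ is again a fundamental matrix, and this identifies the difference ring generated by $f$ with a sub-difference-ring of $R$ in such a way that $(f_1,\dots,f_n)^\top$ becomes the first column of $\mathcal U$.

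By \cite[Corollary~1.16]{vdPS97} one has $R=e_0R\oplus\cdots\oplus e_{s-1}R$ with $e_iR$ integral domains and $\rho(e_i)=e_{i+1\bmod s}$. I would take $r:=s$. Since $\rho^r$ fixes each $e_i$, the matrix $e_0\mathcal U$ is a fundamental matrix for $\rho^r(Y)=A_{[r]}Y$ inside $e_0R$, with first column $e_0f=f$ (as $e_0$ is the unit of $e_0R$), and $e_0R$ is a simple $\rho^r$-difference ring which is an integral domain. By Lemma \ref{lem:constantiterate} the field of $\rho^r$-constants has not grown, so $(e_0R)^{\rho^r}=\C=K^{\rho^r}$, and $e_0R$ is a Picard–Vessiot ring for $\rho^r(Y)=A_{[r]}Y$ over $(K,\rho^r)$; its fraction field $\mathcal Q:=\mathrm{Frac}(e_0R)$ is then a Picard–Vessiot field extension, giving (a). For (b), I would identify $G_r=\Gal(\mathcal Q/K)$ with $\mathrm{Stab}_G(e_0)$: every element of $\mathrm{Stab}_G(e_0)$ restricts to a $K$-$\rho^r$-automorphism of $e_0R$, and conversely such an automorphism extends to $R$ through the relations $e_iR=\rho^i(e_0R)$. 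The permutation action of $G$ on the finite set $\{e_0,\dots,e_{s-1}\}$ is a morphism from $G$ to a finite group, so the identity component $G^\circ$ fixes each $e_i$, whence $G^\circ\subset\mathrm{Stab}_G(e_0)$; on the other hand $e_0R$ is a domain, so $G_r=\mathrm{Stab}_G(e_0)$ is connected and thus contained in $G^\circ$. Therefore $G_r=G^\circ$, the identity component of $G$.

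I expect the main obstacle to be the first step: the solution $f$ lives in an a priori arbitrary extension $L$, and it must be matched against an abstract fundamental system. The crucial input is that the ring of $\rho$-constants of $L\otimes_K R$ remains equal to $\C$, which rests on $\C$ being algebraically closed together with $L^\rho=R^\rho=\C$. Once $f$ is realized as a column of a fundamental matrix, the passage to a connected Galois group via the idempotent decomposition is routine structure theory of Picard–Vessiot rings, the only additional ingredient being Lemma \ref{lem:constantiterate} to control the constants under iteration.
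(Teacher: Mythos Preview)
Your approach has a genuine gap at the key step you yourself flag: the claim that $(L\otimes_K R)^\rho=\C$ is false. Take for instance $K=\C(x)$ with the shift, $A=(x)\in\GL_1(K)$, and $R=K[\Gamma,\Gamma^{-1}]$ with $\rho(\Gamma)=x\Gamma$ the abstract Picard--Vessiot ring; let $L$ be any $\rho$-field extension of $K$ with $L^\rho=\C$ containing a nonzero solution $g$ of $\rho(y)=xy$ (for example $L=\mathrm{Frac}(R)$ itself). Then $g\otimes\Gamma^{-1}\in L\otimes_K R$ is $\rho$-invariant but does not lie in $\C$: as an $L$-algebra one has $L\otimes_K R\cong L[\Gamma,\Gamma^{-1}]$, and the powers of $\Gamma$ are $L$-linearly independent. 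So you cannot conclude $\mathcal U_0^{-1}(f\otimes1)\in\C^n$, and even if you could, an equality $f\otimes1=(1\otimes\mathcal U_0)c$ inside the tensor product would not by itself produce an embedding of the $f_i$ into $R$. The paper bypasses this entirely by reversing the order of construction: since $K(f_1,\dots,f_n)^\rho\subset L^\rho=\C$, one first builds a Picard--Vessiot ring $R_0$ for $\rho(Y)=AY$ over the larger base $K(f_1,\dots,f_n)$; then $f$ lies in $R_0$ tautologically and can be taken as the first column of a fundamental matrix $U$, and $R:=K[U,\det(U)^{-1}]\subset R_0$ is shown to be a Picard--Vessiot ring over $K$ by checking that its total quotient ring, which embeds in that of $R_0$, still has $\rho$-constants $\C$.

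There is a second gap in part (b). Your identification $G_r\simeq\mathrm{Stab}_G(e_0)$ is correct, but the assertion that $e_0R$ being a domain forces $G_r$ to be connected is unjustified: a Picard--Vessiot \emph{field} extension can have disconnected Galois group over a general base. What makes it work here is Lemma~\ref{lem: C1}, a property specific to the four choices of $(K,\rho)$ in the paper: $K$ admits no proper finite $\rho^r$-field extension. The paper's argument combines this with the Galois correspondence: the fixed field $\mathcal Q^{G_r^\circ}$ is a finite $\rho^r$-extension of $K$, hence equals $K$, whence $G_r^\circ=G_r$. Together with the fact (cited from \cite{Ro18}) that $G_r$ is normal of finite index in $G$, this gives $G_r=G^\circ$.
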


Though this result can be easily deduced from the proof of \cite[Proposition~4.6]{DHR2} 
in the particular case where the parametric operator is the identity, we find more convenient for the reader 
to include the proof below.  

\begin{proof}

(a) Since $K ( f_1,\dots,f_n )^{\rho}\subset L^{\rho}=\C$,  
 there exists a Picard-Vessiot ring $R_0$ for $\rho(Y)=AY$ over $K ( f_1,\dots,f_n )$. 
Since $f_1,\dots,f_n\in  R_0$, there exists a fundamental matrix $U$  for $R_0$ having 
$(f_1, \dots , f_n)^{\top}$ as first column.  Let $\cQ_0$ be the total quotient ring of $R_0$. By Proposition \ref{prop:PvringPvext}, we find
$\cQ_0^{\rho}=\C$. 

Set $R:=K[ U,1/\det(U)]$. The difference ring $(R,\rho)$ is a subring of the Picard-Vessiot ring $(R_0,\rho)$.
  Since any zero divisor in $R$ is a zero divisor in $R_0$, the total quotient ring $\cQ$ of $R$ embeds in $\cQ_0$  and consequently is a pseudofield (see \cite[Lemma 1.3.4]{wibmerthesis}). Consequently $\C \subset \cQ^{\rho} \subset \cQ_0^{\rho}=\C$.  
  Then $\cQ^{\rho}=\C$, and $(\cQ,\rho)$ is a Picard-Vessiot extension for $\rho(Y)=A Y$ over $(K,\rho)$.  Since $K^\rho=\C$ is algebraically closed, the uniqueness of Picard-Vessiot extensions above $K$ allows to conclude that $\Gal(\cQ/K)=G$.  We have proved that we can embed any solution in $L$ in a Picard-Vessiot extension above $K$.

By Proposition \ref{prop:PvringPvext}, $R$ is a Picard-Vessiot ring for $\rho(Y)=A Y$ over $(K,\rho)$.
Let $e_0,\dots,e_{r-1}$ denote the 
orthogonal idempotent relative to its ring structure.  
Since $K[f_1,\dots,f_n]$ is an integral domain contained in $R$, which is  the direct sum of the integral domains 
$e_iR$, there exists $i \in \{0,\dots,r-1\}$ such that $K[f_1,\dots,f_n] \subset e_i R$.  
Since $K^\rho=\C$ is algebraically closed,  Lemma \ref{lem:constantiterate} implies that $K^{\rho^r}=\C$.
By \cite[Lemma 1.26]{vdPS97}, $(e_{i}R,\rho^{r})$ is a Picard-Vessiot ring for the $\rho^{r}$-system  
$\rho^{r}(Y)=A_{[r]} Y$ and $e_i R$ is an integral domain. Its  total quotient ring 
 $\mathcal{Q}_1$ is a field and, since $K^{\rho^r}$ is algebraically closed, $\cQ_1$ is a 
 Picard-Vessiot field extension by  Proposition \ref{prop:PvringPvext}.  By construction, $\cQ_1$  
 contains $f_1,\dots,f_n$, 
and we can thus choose a fundamental matrix having $(f_1, \dots , f_n)^{\top}$ as first column.

\bigskip

\noindent 
(b) By \cite[Theorem 12]{Ro18},  $G_{r}=\Gal(\cQ_1/K)$ is a normal algebraic subgroup of $G$ and the quotient $G/G_{r}$ is finite. To conclude, it remains to prove that $G_{r}$ is connected. Let $G_{r}^{\circ}$ denote its identity component. The 
Galois correspondence, see Theorem \ref{thm: correspondence}, gives that the 
$\rho^r$-field ${\mathcal{Q}_1}^{G_{r}^{\circ}}$ is a finite extension of $K$. 
Lemma \ref{lem: C1} implies that ${\mathcal{Q}_1}^{G^{\circ}_{r}} =K$ and, 
applying the Galois correspondence again, we deduce   
that ${G^{\circ}_{r}}=G_{r}$. 
\end{proof}

\begin{rem}\label{rem:groupofiteratesifconnected}
The proof of (b) shows that, if $G$ is connected, then, for every positive integer $\ell$, the Galois group of 
$\rho^\ell(Y)=A_{[\ell]} Y$ over $K$ coincides with $G$. 
\end{rem}

In the parametrized framework, we have the following similar result.  

\begin{propo}\label{prop:goodPPVsolutionabstractdeltafield}
Let  $L$ be a $(\rho,\delta)$-field extension of $\tK$ with $L^\rho=\tC$, and  
let $(f_1, \dots , f_n)^{\top}\in L^{n}$ be a nonzero solution to \eqref{eq: systg}. 
 Then there exist a positive integer $r$  and a 
  parametrized  Picard-Vessiot field extension $\widetilde{\mathcal Q}$ for $\rho^{r}(Y)=A_{[r]}Y$ over $\tK$, with a fundamental matrix $U$ 
 having $(f_1, \dots , f_n)^{\top}$ as first column.
 \end{propo}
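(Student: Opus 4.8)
The plan is to transpose the proof of Proposition~\ref{prop: diffGaloisconnexe}(a) to the $(\rho,\delta)$-setting, replacing difference rings and difference (Picard--Vessiot) extensions by their parametrized counterparts throughout. First I would pass to a base field over which the given solution is tautologically available. Let $L_0$ be the $(\rho,\delta)$-subfield of $L$ generated over $\tK$ by the $\rho$- and $\delta$-orbits of $f_1,\dots,f_n$; since $\rho$ and $\delta$ are respectively an automorphism and a derivation of $L$, this is a $(\rho,\delta)$-field, and $L_0^{\rho}\subseteq L^{\rho}=\tC$ while $\tC\subseteq\tK\subseteq L_0$, so $L_0^{\rho}=\tC$, which is $\delta$-closed and in particular algebraically closed. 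By \cite[Corollary~9]{Wibmerexistence} there is then a $(\rho,\delta)$-Picard--Vessiot extension $\tcQ_0$ for $\rho(Y)=AY$ over $L_0$, with a fundamental matrix $U_0$, $\tcQ_0^{\rho}=\tC$, and associated $(\rho,\delta)$-Picard--Vessiot ring $\widetilde{R}_0=L_0\{U_0,\det(U_0)^{-1}\}_\delta$. Since the columns of $U_0$ form a $\tC$-basis of the solution space of $\rho(Y)=AY$ in $\tcQ_0$, the vector $(f_1,\dots,f_n)^{\top}\in L_0^n$ equals $U_0c$ for some nonzero $c\in\tC^n$; completing $c$ into a matrix $C\in\GL_n(\tC)$ and replacing $U_0$ by $U:=U_0C$, I obtain a fundamental matrix over $L_0$ whose first column is $(f_1,\dots,f_n)^{\top}$ and which, because $C$ has $\rho$-constant entries, generates the same ring $\widetilde{R}_0$.

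Next I would descend from $L_0$ to $\tK$, exactly as in Proposition~\ref{prop: diffGaloisconnexe}(a). Set $\widetilde{R}:=\tK\{U,\det(U)^{-1}\}_\delta\subseteq\widetilde{R}_0$. Its total quotient ring embeds into the pseudo-$\delta$-field $\tcQ_0$ (using the $\delta$-version of \cite[Lemma~1.3.4]{wibmerthesis}), is therefore itself a pseudo-$\delta$-field, and has ring of $\rho$-constants squeezed between $\tC$ and $\tcQ_0^{\rho}=\tC$. Hence that total quotient ring is a $(\rho,\delta)$-Picard--Vessiot extension for $\rho(Y)=AY$ over $\tK$ with fundamental matrix $U$, and $\widetilde{R}$ is the associated $(\rho,\delta)$-Picard--Vessiot ring.

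To produce a genuine field — which is what forces the iteration of $\rho$ — I would then decompose $\widetilde{R}=e_0\widetilde{R}\oplus\cdots\oplus e_{r-1}\widetilde{R}$ into its $\delta$-integral-domain components, transitively permuted by $\rho$. The $\delta$-domain $\tK\{f_1,\dots,f_n\}_\delta\subseteq\widetilde{R}$ lies in some factor $e_i\widetilde{R}$, so $e_jf_k=0$ for $j\neq i$, and hence the first column of $e_iU$ is again $(f_1,\dots,f_n)^{\top}$. By Lemma~\ref{lem:constantiterate} we have $\tK^{\rho^r}=\tC$, so the parametrized theory applies over $(\tK,\rho^r,\delta)$; and by the parametrized analogue of \cite[Lemma~1.26]{vdPS97} (contained in the proof of \cite[Proposition~4.6]{DHR2} specialised to the trivial parametric operator), $(e_i\widetilde{R},\rho^r,\delta)$ is a $(\rho^r,\delta)$-Picard--Vessiot ring for $\rho^r(Y)=A_{[r]}Y$ over $\tK$ with fundamental matrix $e_iU$. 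Since $e_i\widetilde{R}$ is a $\delta$-integral domain, its fraction field $\widetilde{\mathcal{Q}}$ is a $\delta$-field; being the total quotient ring of a $(\rho^r,\delta)$-Picard--Vessiot ring, it is a $(\rho^r,\delta)$-Picard--Vessiot field extension, and it carries $e_iU$ as a fundamental matrix with first column $(f_1,\dots,f_n)^{\top}$. This $\widetilde{\mathcal{Q}}$, together with the integer $r$, is the required extension.

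The two steps I expect to require the most care are the descents. First, one must check the $\delta$-refinement of \cite[Lemma~1.3.4]{wibmerthesis}, guaranteeing that passing to $\widetilde{R}=\tK\{U,\det(U)^{-1}\}_\delta$ and then to its total quotient ring yields a pseudo-$\delta$-field without enlarging the ring of $\rho$-constants. Second, one must set up the parametrized analogue of \cite[Lemma~1.26]{vdPS97}, identifying an idempotent component of $\widetilde{R}$ with a $(\rho^r,\delta)$-Picard--Vessiot ring for the iterated system; this is where the integer $r$ is produced, and it is essentially already available in \cite{DHR2}.
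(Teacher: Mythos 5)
Your proposal is correct and follows essentially the same route as the paper, whose own proof of this proposition is just an instruction to repeat the argument of Proposition \ref{prop: diffGaloisconnexe}(a) in the $(\rho,\delta)$-setting (together with \cite[Lemma 3.7]{DHR18}), noting via Lemma \ref{lem:constantiterate} that $\tK^{\rho^r}=\tC$ because $\tC$ is algebraically closed. The only cosmetic point --- shared with the paper's own write-up of Proposition \ref{prop: diffGaloisconnexe} --- is that the inclusion of the domain generated by the $f_k$ into a single component $e_i\widetilde{R}$ should be read as injectivity of the projection onto that component, so one identifies $f_k$ with $e_if_k$ rather than literally having $e_jf_k=0$ for $j\neq i$ (a nonzero $f_k$ is a unit of $L_0$, so no $e_jf_k$ vanishes).
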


\begin{proof}
The proof  is similar to  Proposition \ref{prop: diffGaloisconnexe}   and  \cite[Lemma 3.7]{DHR18}, just noting that, 
by Lemma \ref{lem:constantiterate}, $\tK^{\rho^r}=\tC$ for $\tC$ is algebraically closed. 
\end{proof}

The following result is obtained by combining the two previous propositions.

\begin{coro}\label{cor:iterationGaloisgroupconnectedPPVfield}
Let  $(f_1, \dots , f_n)^{\top}\in F_{0}^{n}$ be a nonzero solution to \eqref{eq: systg} and let $G$ 
be the difference Galois group of \eqref{eq: systg}. Then, there exists a positive integer $r$ such that 
\begin{itemize}
\item[(a)] There exists a
$(\rho^r, \delta)$-Picard-Vessiot \emph{field} extension $\widetilde{\mathcal Q}$ for ${\rho^{r}(Y)=A_{[r]}Y}$  with a fundamental matrix $U$ 
 having $(f_1, \dots , f_n)^{\top}$ as first column over $\tK$ (where $\tK =\widetilde{C_{sh}}(x)$ for some positive integer  $s$ dividing $r$ in Case $\bS_0$). 
 \item[(b)] The difference Galois group of $\rho^{r}(Y)=A_{[r]}Y$ over $K$ coincides with the identity  component of $G$.
\end{itemize}

\end{coro}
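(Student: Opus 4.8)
The plan is to obtain Corollary~\ref{cor:iterationGaloisgroupconnectedPPVfield} by feeding a solution vector in $F_0$ into the two preceding propositions successively. First I would apply Proposition~\ref{prop: diffGaloisconnexe} with $L=F_0$: note that $F_0^\rho=\C$ in each of Cases $\textbf{S}_0$, $\textbf{S}_\infty$, $\textbf{Q}$, $\textbf{M}$ (condition (ii) in Case $\textbf{S}_0$, and $F_0=F$ with $F^\rho=\C$ in the other cases), so the hypotheses are met. This produces a positive integer $r_1$, a Picard-Vessiot \emph{field} extension $\mathcal Q$ for $\rho^{r_1}(Y)=A_{[r_1]}Y$ over $(K,\rho^{r_1})$ with a fundamental matrix having $(f_1,\dots,f_n)^\top$ as first column, and moreover the difference Galois group $G_{r_1}$ of $\rho^{r_1}(Y)=A_{[r_1]}Y$ over $K$ coincides with the identity component $G^\circ$ of $G$; in particular $G_{r_1}$ is connected. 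This already yields a statement of type (b) for the exponent $r_1$, and by Remark~\ref{rem:groupofiteratesifconnected} applied to the connected group $G_{r_1}$, the Galois group of $\rho^{r}(Y)=A_{[r]}Y$ over $K$ will remain equal to $G^\circ$ for \emph{every} multiple $r$ of $r_1$.

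Next I would produce the parametrized field extension. Here one must pass from the $\rho$-field $K$ to the $(\rho,\delta)$-field $\tK$ of Section~\ref{sec: paramframework}. In Cases $\textbf{S}_\infty$, $\textbf{Q}$, $\textbf{M}$ the construction of $\tK$ does not depend on $r$, and since $\delta$ commutes with $\rho$ it also commutes with $\rho^{r}$ for any $r$; in Case $\textbf{S}_0$, as observed in Remark~\ref{rem:diffconstant}, when iterating $\rho$ to $\rho^r$ one should work with $\tK=\widetilde{C_{sh}}(x)$ for an appropriate $s\mid r$, which is still a $(\rho^r,\delta)$-field with $\rho^r$-constant field $\widetilde{C_{sh}}$. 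Viewing $F_0$ inside an appropriate $(\rho,\delta)$-field extension of $\tK$ with $\rho$-constants $\tC$ — concretely $\mathrm{Frac}(F_0\otimes_\C\tC)$, whose $\rho$-constants are $\tC$ by the same argument as \cite[Lemma~2.3]{DHR18} — one applies Proposition~\ref{prop:goodPPVsolutionabstractdeltafield} to the solution vector, obtaining a further exponent $r_2$ (a multiple of $r_1$, after replacing the base system by its $r_1$-th iterate first) and a $(\rho^{r_2},\delta)$-Picard-Vessiot \emph{field} extension $\widetilde{\mathcal Q}$ for $\rho^{r_2}(Y)=A_{[r_2]}Y$ over $\tK$ with fundamental matrix having $(f_1,\dots,f_n)^\top$ as first column. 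Finally I set $r=r_2$ (or $r=\mathrm{lcm}(r_1,r_2)$ together with a final application of the iteration lemmas so that all conclusions hold simultaneously): (a) is the output of Proposition~\ref{prop:goodPPVsolutionabstractdeltafield}, and (b) holds because $r$ is a multiple of $r_1$ and Remark~\ref{rem:groupofiteratesifconnected} guarantees the Galois group of $\rho^{r}(Y)=A_{[r]}Y$ over $K$ is still $G^\circ$.

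The main obstacle I anticipate is purely bookkeeping of the exponents and of the base field in Case $\textbf{S}_0$: one must make sure that the integer $r$ produced by the parametrized proposition can be taken to be a \emph{common} multiple of the integer from the unparametrized proposition, so that conclusion (b) — which is a statement about the \emph{classical} Galois group over $K$ — still holds for the \emph{same} $r$ for which (a) holds; this is exactly what Remark~\ref{rem:groupofiteratesifconnected} is for, since once $G_{r_1}$ is connected, further iteration does not shrink it. The second mild subtlety, again in Case $\textbf{S}_0$, is tracking the constant field: iterating $\rho$ to $\rho^r$ forces one to replace $\widetilde{C_h}$ by $\widetilde{C_{sh}}$ for a suitable divisor $s$ of $r$, which is why the statement of the corollary records $\tK=\widetilde{C_{sh}}(x)$; one checks via Lemma~\ref{lem:constantiterate} that $\tK^{\rho^r}=\tC$ because $\tC=\widetilde{C_{sh}}$ is algebraically closed, so Proposition~\ref{prop:goodPPVsolutionabstractdeltafield} still applies with $\rho^r$ in place of $\rho$.
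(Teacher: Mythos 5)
Your proof follows the paper's argument essentially verbatim: first apply Proposition~\ref{prop: diffGaloisconnexe} to reach a connected Galois group after an iteration $\rho^{s}$, then embed $F_0$ into a suitable $(\rho^{s},\delta)$-field extension $L$ of $\tK$ with $L^{\rho^{s}}=\tC$ so that Proposition~\ref{prop:goodPPVsolutionabstractdeltafield} applies, and invoke Remark~\ref{rem:groupofiteratesifconnected} to see that the second iteration leaves the (now connected) Galois group over $K$ unchanged. The one imprecision is your concrete choice $L=\mathrm{Frac}(F_0\otimes_\C\tC)$: this is exactly the paper's choice in Cases $\bS_\infty$ and $\bQ$, but in Case $\bM$ that field does not contain $\log$ and hence is not an extension of $\tK=\tC(x^{1/*})(\log)$ (the paper takes $L=\tC((x^{1/*}))(\log)$), and in Case $\bS_0$ one should first form the compositum $F_1$ of $F_0$ and $C_{sh}$ inside $\cM er(\C)$ and take $\mathrm{Frac}(\tC\otimes_{C_{sh}}F_1)$, since the linear-disjointness argument needed to control the $\rho^{s}$-constants works over $C_{sh}$ rather than over $\C$.
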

\begin{proof}

By Proposition \ref{prop: diffGaloisconnexe}, there exists a positive integer $s$ such that 
the difference Galois group of $\rho^s(Y)=A_{[s]}Y$ is equal to the identity component of $G$.  
In order to apply Proposition \ref{prop:goodPPVsolutionabstractdeltafield} to the system $\rho^s(Y)=A_{[s]}Y$, 
we need to embed $F_0$ into a $(\rho^s,\delta)$-field extension $L$ of $\tK$ with $L^{\rho^s}=\tC$. 
We proceed as follows. 

\begin{itemize}

\item Case ${\bS}_0$. We let $F_1$ denote the smallest $(\rho^s,\delta)$-subfield of $\cM er(\C)$ containing $F_0$ and $C_{sh}$. Since $\cM er(\C)^{\rho^s}=C_{sh}$, we find that
 ${F_1}^{\rho^s}=C_{sh}$. Let $\tC$ denote a $\delta$-closure of $C_{sh}$, considered as a constant $\rho^s$-field. 
 The fields extensions $\tC$ and $F_1$ are linearly disjoint above $C_{sh}$ (see \cite[Lemma 1.1.6]{wibmerthesis}).  
 Thereby, their compositum $L$ is the fraction field of $\tC \otimes_{C_{sh}} F_1$ (see \cite[A.V.13]{Bourbakialgebra}).  
 Arguing as in  the proof of \cite[Lemma 2.3]{DHR18}, we obtain that $L$  has the required properties.

 \item Cases $\bS_\infty$ and $\bQ$. We let $L$ be the fraction field of $F_0 \otimes_\C \tC$.

 \item Case $\textbf M$. We let $L$ be the field $\tC((x^{1/*}))(\log)$. 
\end{itemize}

Applying Proposition \ref{prop:goodPPVsolutionabstractdeltafield} to the system $\rho^s(Y)=A_{[s]}Y$ over $\tK$, one can perform a second iteration  to obtain a system $\rho^{sl}(Y)=A_{[sl]}Y$ satisfying $(a)$. By Remark~\ref{rem:groupofiteratesifconnected}, the difference Galois group of $\rho^{sl}(Y)=A_{[sl]}Y$ over $K$ is the identity component of $G$. This ends the proof. 
\end{proof}
 \section{Proof of Theorem \ref{thm: main}}\label{sec: main}

This section is devoted to the proof of our main result. 
Before proving Theorem~\ref{thm: main} in full generality, we first consider the following 
two particular cases. 
\begin{itemize}
\item[-] The function $f$ is solution to an inhomogeneous equation of order one. 
\item[-] The difference Galois group of the equation associated with 
$f$ is both connected and irreducible. 
\end{itemize}

All along this section, we keep on with the notation of Sections \ref{sec:mainresult} and \ref{sec: paramframework}. 
\subsection{Affine order one equations}

Various hypertranscendence criteria for solutions to inhomogeneous order one 
equations have already been obtained, see for instance \cite{rande1992equations,ishizaki1998hypertranscendency,HS08,NG12}. 
We first deduce from these criteria the following result.

\begin{propo} \label{prop: order1}
If $f\in F_{0}$ is solution to an equation of the form ${\rho(f) = a f + b}$,  
with $ a, b \in K$, then either $f$ is $\tr$-transcendental over $K$ or $f$ belongs to $K$.
\end{propo}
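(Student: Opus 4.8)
The plan is to dispose immediately of the degenerate cases and then reduce to a single nontrivial situation that is already handled in the literature cited just before the statement. First I would treat the case $b = 0$, so that $\rho(f) = af$. If $f = 0$ then $f \in K$ and we are done; otherwise $f$ is a nonzero solution to a homogeneous order one equation. Here I would invoke the known hypertranscendence criterion for $\rho(f)=af$ (from \cite{HS08,ishizaki1998hypertranscendency,NG12} in the relevant cases), whose conclusion is precisely the dichotomy: either there is an integer $m \geq 1$ and $g \in K^*$ with $a^m = \rho(g)/g$ (a ``$\rho$-radical''-type condition), in which case $f$ is algebraic, hence $\tr$-algebraic, over $K$ and one checks directly it lies in $K$ after adjusting by a constant; or no such relation holds, and then $f$ is $\tr$-transcendental over $K$. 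Since $f^m/g$ is a nonzero $\rho$-constant of $F_0$, in all four cases the field of $\rho$-constants of $F_0$ is $\C$, so $f^m/g \in \C$ and hence $f$ is algebraic over $K$; by Lemma \ref{lem: C1} (no proper finite $\rho$-extension of $K$) this forces $f \in K$.

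For the genuinely inhomogeneous case $b \neq 0$, the strategy is to apply the appropriate hypertranscendence criterion for affine order one equations $\rho(y) = ay + b$ directly. These criteria (see \cite{rande1992equations,ishizaki1998hypertranscendency,HS08,NG12}) state that $f$ is $\tr$-algebraic over $K$ if and only if a certain explicit arithmetic condition on the pair $(a,b)$ holds — roughly, that after a suitable telescoping substitution the equation reduces to the constant-coefficient situation, equivalently that a related first-order inhomogeneous $\tr$-equation has a solution in a finite extension of $K$. In that favorable case one then has to argue that $f$ actually lies in $K$ itself, not merely in some differential extension; this is where Theorem \ref{lem: linlin} enters: if $f$ is $\tr$-algebraic and satisfies a $\rho$-linear equation, and one can upgrade ``$\tr$-algebraic'' to ``holonomic'' in the order one situation (which is classical for affine order one equations, since $\tr$-algebraicity of such an $f$ forces it to satisfy a linear $\tr$-equation over $K$), then Theorem \ref{lem: linlin} yields $f \in K$ outright.

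The main obstacle I anticipate is bookkeeping rather than conceptual: the hypertranscendence criteria for affine order one equations are stated slightly differently in each of the cited references (shift, $q$-difference, Mahler), with different normalizations, and one must check that they apply verbatim in each of Cases $\textbf{S}_0$, $\textbf{S}_\infty$, $\textbf{Q}$, $\textbf{M}$ with $K$, $F_0$ as in Section \ref{sec:mainresult}. In particular, for Case $\textbf{S}_0$ one wants the criterion to cover $F_0$ as large as $\cM er(\C)$ (or $\C(x,\Gamma(x))$), which is where properties (i)--(iii) of $F_0$ and the containment $F_0 \cap K_0 = \C(x)$ are used, and for Cases $\textbf{Q}$, $\textbf{M}$ one uses the ramified change of variable $z = x^{1/\ell}$ (as in Remark \ref{rem:linlin}) to pull back to the unramified setting where the literature criteria are stated. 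Once the translation between frameworks is pinned down, the proof is a short assembly of: (1) the cited criterion giving the $\tr$-transcendence dichotomy, and (2) Theorem \ref{lem: linlin} together with Lemma \ref{lem: C1} to conclude $f \in K$ on the $\tr$-algebraic side.
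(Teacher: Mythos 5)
Your overall architecture---reduce to the telescoping criteria for affine order one equations, then use Theorem \ref{lem: linlin} to upgrade ``$\tr$-algebraic and $\rho$-linear'' to ``$f\in K$''---is the same as the paper's. For the inhomogeneous case your sketch matches the intended proof: the paper normalizes $a$ to a standard element $a^*$ via \cite[Lemma~6.2]{HS08}, applies \cite[Propositions~3.9 and 3.10]{HS08} to write $\tilde b=\rho(h)-a^*h$ (up to an explicit correction term in Case $\mathbf{Q}$) with $a^*$ essentially constant, so that $\tilde f=f/e-h$ satisfies $\rho(\tilde f)=a^*\tilde f$, deduces $\tr(\tilde f)/\tilde f\in\C$, and concludes with Theorem \ref{lem: linlin}; Case $\mathbf{M}$ is simply quoted from Rand\'e.

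There is, however, a genuine gap in your homogeneous branch, and it sits exactly where the real work is. The dichotomy you invoke---either $a^m=\rho(g)/g$ for some $m\geq 1$ and $g\in K^*$, or $f$ is $\tr$-transcendental---is not the correct criterion. The correct one (Lemma \ref{lem:rankonehomog}, i.e.\ \cite[Corollary~3.4]{HS08}) says a nonzero solution of $\rho(y)=ay$ is $\delta$-algebraic if and only if $a=cx^{\alpha}\rho(g)/g$ with $c\in\C^*$ \emph{arbitrary} and $\alpha\in\Q$. When $c$ is not a root of unity (take $a=2$ in Case $\mathbf{S}_0$, whose solutions are constant multiples of $2^{x/h}$), the solution is $\tr$-algebraic but \emph{not} algebraic over $K$, so your step ``$f^m/g\in\C$, hence $f$ algebraic, hence $f\in K$ by Lemma \ref{lem: C1}'' never gets off the ground, and your stated dichotomy would wrongly declare such an $f$ hypertranscendental. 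Showing that a solution of this kind lying in $F_0$ must already be in $K$ is precisely where the structure of $F_0$ must be brought to bear: condition (iii), $F_0\cap K_0=\C(x)$, in Case $\mathbf{S}_0$ (the paper packages this inside Theorem \ref{lem: linlin} after producing the auxiliary equation $\tr(\tilde f)=\lambda\tilde f$), and direct valuation/Puiseux-series comparisons in Cases $\mathbf{S}_\infty$, $\mathbf{Q}$, $\mathbf{M}$ (in Case $\mathbf{Q}$ this is also what forces $\alpha=0$ and $\tilde f=c_0x^r$). You flag conditions (i)--(iii) as bookkeeping at the end, but in the homogeneous case as you have set it up they are never actually used, so the case $a=cx^{\alpha}\rho(g)/g$ with $c$ not a root of unity remains unhandled.
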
 

\begin{proof}[Proof of Proposition \ref{prop: order1}] 
Let us first note that if $a=0$, then $f=\rho^{-1}(b)\in K$.  
We can thus assume that $a\neq 0$. 
Furthermore, we observe that, in Cases $\textbf{Q}$ and $\textbf{M}$, we can use a change of variable 
of the form $z=x^{1/\ell}$ to  reduce the situation to the case where $K=\C(x)$.  
From now on, we thus assume that $K=\C(x)$.  
Let us also assume that $f$ is $\tr$-algebraic over $K$.  
It remains to prove that $f\in K$.

We first note that Case $\textbf{M}$ corresponds to 
\cite[Th\'eor\`eme 5.2]{rande1992equations}. 
 Now, let us consider Cases $\textbf{S}_{0}$, $\textbf{S}_{\infty}$, and $\textbf{Q}$, which  are partially covered by  
\cite{HS08}.  
We recall the following definition from \cite[\S 6.1]{HS08} 
(see also \cite[\S 2]{vdPS97}). 
 We say that $\alpha \in K$ is \emph{standard} if, for all positive integers $\ell$, 
$\alpha$ and $\rho^\ell(\alpha)$ have no common zero or pole.  
By \cite[Lemma~6.2]{HS08}, there exists a standard element $a^*$ and a nonzero $e \in K$ such that 
$a =a^* \rho(e)/e$.  
It follows that $f/e$ is solution to the equation 
$$
\rho(f/e)=a^*f/e+b/\rho(e) \,.
$$ 
Set $g=f/e$ and $\tilde{b}=b/\rho(e)$. Since $e\in K$, it suffices to prove that 
$g\in K$. 
Let us note that $g=f/e$ is  $\tr$-algebraic over $K$. 
 
  \medskip
  
In Cases $\bS_0$ and $\bS_\infty$,  \cite[Proposition~3.9]{HS08} implies that $a^{*} \in \C$ 
and there exists $h \in K$ such that 
 $\tilde{b} = \rho(h) - a^{*} h$. We obtain that 
\begin{eqnarray*}
\rho (g)&=&a^{*}g+\tilde{b}\\
&=&a^{*}g+\rho(h) - a^{*}h \,.
\end{eqnarray*}
Hence $\rho(g-h)=a^{*}(g-h)$. If $g=h$, then $g\in K$ and we are done. Thus, we can assume that $g\neq h$. 
In that case, setting $\tilde{f}=g-h$, we obtain $\rho(\tr(\tilde{f}))=a^{*}\tr(\tilde{f})$. It follows that 
$\rho (\tr(\tilde{f})/\tilde{f})=\tr(\tilde{f})/\tilde{f}$, and thus $\tr(\tilde{f})/\tilde{f}\in \C\subset K$. 
Since $\tilde{f}\in F_{0}$ satisfies both a linear $\rho$-equation and a linear $\tr$-equation with coefficients 
in $K$,  Theorem \ref{lem: linlin} implies that $\tilde{f}\in K$. Since $h\in K$, we deduce that $g\in K$, as expected. 
  
  \medskip

In Case $\textbf{Q}$, \cite[Proposition~3.10]{HS08} implies  that  
$a^{*} = cx^\alpha$ for some $c \in \C^{*}$ and $\alpha \in \Z$, and one of the following conditions holds. 
   
\begin{enumerate}
\item[(a)] $a^{*} = q^r$ for some $r \in \Z$, and $\tilde{b} = \rho(h) - a^{*}h + dx^r$ for some $h \in K$ and $d \in \C$.  
\item[(b)] $a^{*} \notin q^{\Z}$ and $\tilde{b} = \rho(h) - a^{*}h $ for some $h \in K$.
\end{enumerate}

In Case (a),  we deduce that 
\begin{equation}\label{eq: caseQa}
\rho\left(\frac{g-h}{x^{r}}\right)=\frac{g-h}{x^{r}}+dq^{-r}\,\cdot
\end{equation}
Since $(g-h)/x^{r}\in F_{0}$, we can consider its expansion in Puiseux series. 
Since $q$ is not a root of unity, we easily deduce from Equation \eqref{eq: caseQa} that $(g-h)/x^r\in \C\subset K$. 
Hence $g\in K$, as wanted.

In Case (b), we have that $a^{*} \notin q^{\Z}$ and  $\tilde{b} = \rho(h) - a^{*}h$ for some $h \in K$. 
It follows that  $\rho (g-h)=cx^{\alpha}(g-h)$, where 
 $g-h\in F_0$ is a Puiseux series. Arguing as  in Case (a), we obtain that $\alpha=0$ 
and $g-h=c_{0}x^{r}$ for some $r\in \Q$ and $c_{0}\in \C$. Hence $g\in K$.   This ends the proof. 
\end{proof}

For order one equations over $K$ whose solutions do not necessarily belong to  $F_{0}$, 
one has the following criterion for differential algebraicity.   

\begin{lem}\label{lem:rankonehomog}
Let $L$ be a $(\rho,\delta)$-field extension of $\tK$ with $L^\rho=\tC$. Let $f \in L^*$
such that $\rho(f)=a f$ with $a \in K^*$. Then, $f$ is $\delta$-algebraic over $\tK$ if and only if  
$a =cx^\alpha \frac{\rho(g)}{g}$ for some $c\in\C^*$, $\alpha \in \Q$, and $g \in K^*$. 
Moreover, $\alpha=0$ in Cases $\bS_0$, $\bS_\infty$, and $\bM$.
\end{lem}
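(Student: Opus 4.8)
The plan is to prove the criterion by relating the $\delta$-algebraicity of a solution $f$ to a first-order homogeneous $\rho$-equation to the structure of its parametrized difference Galois group, which is a $\delta$-algebraic subgroup of the one-dimensional group $\GL_1(\tC)=\tC^*$. The key principle, following \cite{HS08}, is that $f$ is $\delta$-algebraic over $\tK$ precisely when the parametrized Galois group $\spGal(\tcQ/\tK)$ of the equation $\rho(Y)=aY$ is a \emph{proper} $\delta$-algebraic subgroup of $\tC^*$; this is because $\trd$-type dimension considerations tie the $\delta$-transcendence degree of $\tK\langle f\rangle$ over $\tK$ to the $\delta$-dimension of the group. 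Cassidy's classification of $\delta$-algebraic subgroups of $\mathbf{G}_m$ then says that such a proper subgroup is cut out by a linear $\delta$-polynomial relation, which translates into a relation of the form $\frac{\delta(a)}{a}=\delta\left(\frac{\rho(v)}{v}\right)$ type, or more precisely the existence of a $\delta$-relation satisfied by $\delta(f)/f$.

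First I would set up the parametrized framework of Section \ref{sec: paramframework}: embed the given $f$ into a parametrized Picard-Vessiot extension, which by Proposition \ref{prop:goodPPVsolutionabstractdeltafield} (after passing to an iterate if necessary) exists over $\tK$, with $f$ appearing as a fundamental solution. Then I would invoke the hypertranscendence criterion for rank-one equations from \cite[\S3]{HS08}: $f$ is $\delta$-algebraic over $\tK$ if and only if there exist a linear differential operator $\mathcal{L}\in\tC[\delta]$ and an element $g\in\tK$ such that $\mathcal{L}\left(\frac{\delta(a)}{a}\right)=\rho\left(\frac{\delta(g)}{g}\right)-\frac{\delta(g)}{g}$, or a similar telescoping identity. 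The heart of the argument is then to descend this criterion from $\tK$ back to $K$, using the explicit shape of $a\in K$ and the structure of $K$ in each case ($\C(x)$ or $\C(x^{1/*})$), to show it is equivalent to the asserted factorization $a=cx^\alpha\frac{\rho(g)}{g}$.

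The second and more computational half is to translate the abstract Galois-theoretic condition into the concrete normal form. In Cases $\bS_0$, $\bS_\infty$, and $\bM$, I expect the relevant constant $a^*$ (the standard part of $a$, as in the proof of Proposition \ref{prop: order1}) to be forced to lie in $\C$, which is exactly why $\alpha=0$ there; the difference operators $x\mapsto x+h$ and $x\mapsto x^p$ do not produce a nontrivial multiplicative $x$-power symmetry compatible with $\delta$-algebraicity. In Case $\bQ$, by contrast, the operator $x\mapsto qx$ fixes the $\delta$-structure of powers $x^\alpha$ in a way that permits a genuine $x^\alpha$ factor, so $\alpha\in\Q$ can be nonzero. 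I would run the standard-element reduction $a=a^*\rho(e)/e$ from \cite[Lemma 6.2]{HS08}, analyze the possible $a^*$ via \cite[Propositions 3.9 and 3.10]{HS08}, and match the resulting cases against the claimed form, absorbing $e$ and the telescoping factor into $g$.

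The main obstacle will be the descent from $\tK$ to $K$: the parametrized Galois theory naturally lives over the $\delta$-closed constant field $\tC$, but the conclusion must be phrased with $g\in K^*$ and $c\in\C^*$, not merely in $\tC$. Here I would lean on Lemma \ref{lem:intersectionF0tK} and the linear disjointness of $\tC$ and $K$ over $\C$ to show that any factorization witnessing $\delta$-algebraicity can be chosen with coefficients in the original field $K$, rather than in the enlarged $\tK$. A secondary subtlety is Case $\bM$, where the derivation $\delta=\log\times\tr$ is twisted by the transcendental element $\log$; I would use the transcendence of $\log$ over $K$ (as in the proof of Lemma \ref{lem: irreducible}) to extract a relation purely over $K$ and to confirm that the $x^\alpha$ term cannot survive, giving $\alpha=0$ in this case as well.
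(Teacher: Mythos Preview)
Your proposal is essentially correct and aligns with the paper's approach, though the paper's own ``proof'' is merely a citation: Cases $\bS_0$, $\bS_\infty$, and $\bQ$ are taken directly from \cite[Corollary~3.4]{HS08}, and Case $\bM$ from \cite[Proposition~3.1]{DHR18} together with a descent argument as in the proof of \cite[Corollary~3.2]{HS08}. What you have written is an outline of how those cited results are actually established, so you are following the same route the paper defers to.

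A few small corrections are in order. First, the rank-one criterion from \cite[\S3]{HS08} reads: $f$ is $\delta$-algebraic over $\tK$ if and only if there exist a nonzero $\mathcal{L}\in\tC[\delta]$ and $h\in\tK$ with $\mathcal{L}(\delta(a)/a)=\rho(h)-h$; your version with $\delta(g)/g$ on the right-hand side is not the correct telescoping form. Second, \cite[Propositions~3.9 and~3.10]{HS08} treat the \emph{inhomogeneous} equation $\rho(y)=ay+b$ (they are used in Proposition~\ref{prop: order1}); for the homogeneous lemma at hand the relevant reference is \cite[Corollary~3.4]{HS08}. Third, Lemma~\ref{lem:intersectionF0tK} concerns $F_0\cap\tK$ and is not the descent tool you need here, since no $F_0$ appears in the statement; the descent from $\tC$ to $\C$ (to get $c\in\C^*$, $g\in K^*$) uses instead the linear disjointness of $\tC$ over $\C$ and the fact that $a\in K$, exactly as in the proof of \cite[Corollary~3.2]{HS08}, which the paper explicitly invokes for Case~$\bM$.
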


\begin{proof}
Cases $\bS_0$, $\bS_\infty$, and $\bQ$ are given by \cite[Corollary 3.4]{HS08}, while Case 
$\bM$ is given by \cite[Proposition~3.1]{DHR18} combined with  some descent argument  to go from 
$\tC$ to $\C$. Such argument is similar to the one given in  
the proof of 
\cite[Corollary 3.2]{HS08}.
\end{proof}

\subsection{Connected and irreducible Galois groups}\label{sec:connexe}

As a second step, we consider the situation where the difference Galois group 
is both connected and irreducible.

\begin{propo} \label{prop: irreducible}  
Let us assume that $n\geq 2$. If $f\in F_0$ is a nonzero solution to Equation \eqref{eq: difference} and if  
the corresponding difference Galois group $G$ over $K$ is both 
connected and irreducible, then $f$ is $\tr$-transcendental over $K$. 
\end{propo}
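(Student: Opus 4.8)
The plan is to exploit the fact that a connected, irreducible difference Galois group $G\subset\GL_n(\C)$ has a very restricted structure, following the strategy outlined in the introduction: reduce to the semisimple case and then invoke the theorem of Arreche and Singer. First I would recall that, by Lemma \ref{lem6}, a connected irreducible group is automatically primitive, so $G$ acts irreducibly and primitively on $\C^n$. Passing to $\tK$ via Corollary \ref{cor:iterationGaloisgroupconnectedPPVfield}, I may assume (after replacing $\rho$ by an iterate $\rho^r$, which does not change the problem since $f$ solves the iterated system and $f\in K$ iff $f\in K$ for the iterated system) that there is a $(\rho,\delta)$-Picard--Vessiot \emph{field} extension $\tcQ$ for the linear system attached to \eqref{eq: difference}, with a fundamental matrix whose first column is built from $f,\rho(f),\dots,\rho^{n-1}(f)$, and that the difference Galois group $\tG=G(\tC)$ over $\tK$ is connected and irreducible (Lemmas \ref{lem:conservationdifferenceGaloisgroup} and \ref{lem: irreducible}).

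The heart of the argument is a structural dichotomy for the connected irreducible group $G$. Writing $G^{\der}$ for its derived subgroup and $Z=Z(G)^{\circ}$ for the connected center, one has $G=Z\cdot G^{\der}$, and since $G$ acts irreducibly, $Z$ consists of scalar matrices (by Schur's lemma applied to the irreducible representation, the connected center lands in $\C^*\cdot\mathrm{Id}$), so $Z$ is either trivial or equal to the full group of scalars $\mathbf{G}_m\cdot\mathrm{Id}$. Correspondingly $G^{\der}$ is a connected semisimple group (possibly trivial). If $G^{\der}$ is nontrivial it acts irreducibly on $\C^n$ as well — here I would use that $G=Z\cdot G^{\der}$ with $Z$ scalar forces any $G^{\der}$-invariant subspace to be $G$-invariant — so the system attached to $f$ has a semisimple difference Galois group after quotienting out the scalar part; concretely one twists the system $\rho(Y)=A_{\cL}Y$ by a rank-one factor (a solution $\rho(g)=ag$ of the determinant-type equation) to arrive at a companion system whose difference Galois group is precisely $G^{\der}$, semisimple. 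The degenerate case is $G^{\der}$ trivial, i.e. $G\subseteq\mathbf{G}_m\cdot\mathrm{Id}$; but then $G$ acts on $\C^n$ by scalars, which is reducible as soon as $n\geq 2$, contradicting irreducibility. Hence $G^{\der}$ is a nontrivial connected semisimple group, and the twisted system has semisimple difference Galois group.

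Once the difference Galois group is semisimple, I would apply the theorem of Arreche--Singer \cite{AS17}: for a linear difference system over $K$ (in each of Cases $\bS_0,\bS_\infty,\bQ,\bM$, using Theorem \ref{lem: linlin} as the needed input) whose difference Galois group is semisimple, the parametrized difference Galois group, viewed over $\tK$, coincides with the difference Galois group $\tG=G^{\der}(\tC)$. In particular the parametrized Galois group is Zariski dense in — in fact equal as a differential algebraic group to — a semisimple group, so it is as large as possible. By the Galois correspondence of parametrized difference Galois theory (\cite[Proposition 6.24]{HS08}), this forces the entries of the fundamental matrix, and hence $f$, to be $\delta$-transcendental over $\tK$: a nontrivial $\delta$-algebraic relation among $f,\rho(f),\dots$ and their $\delta$-derivatives would cut out a proper differential algebraic subgroup containing the parametrized Galois group, impossible when the latter is all of $G^{\der}(\tC)$. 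Finally I would descend from $\tK$ back to $K$: if $f$ were $\tr$-algebraic over $K$ it would be $\delta$-algebraic over $\tK$ (since $\delta$ is $\tr$ up to the invertible factor $\log$ in Case $\bM$, and $\tK\supseteq K$), contradicting $\delta$-transcendence; hence $f$ is $\tr$-transcendental over $K$.

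The main obstacle I anticipate is the twisting/reduction step that passes from $G$ connected irreducible to a system with semisimple Galois group while keeping track of the relation "$g\in K \Leftrightarrow f\in K$"-type bookkeeping and ensuring the rank-one twist $\rho(g)=ag$ can be realized inside a suitable $(\rho,\delta)$-extension — this is where one must be careful that the iteration and the passage to $\tK$ do not destroy the hypotheses, and where the precise form of the Arreche--Singer result (exactly which "largeness" hypothesis on $G$ it needs, and whether $G^{\der}$ or $G$ itself is the relevant group) has to be matched up. The rest is, modulo invoking \cite{AS17}, \cite{HS08}, and Theorem \ref{lem: linlin}, comparatively routine.
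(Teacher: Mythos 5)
Your overall strategy (reduce to a semisimple group and invoke Arreche--Singer) is the paper's strategy, and several ingredients (Lemma \ref{lem6}, the passage to $\tK$, the final descent) are used correctly. But there are two genuine gaps.

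First, the step ``the parametrized Galois group equals the semisimple group $G^{\der}(\tC)$, hence $f$ is $\delta$-transcendental'' does not follow as you argue it. A $\delta$-algebraic relation satisfied by the \emph{single entry} $f$ of the fundamental matrix does not cut out a differential algebraic subgroup of $\GL_n(\tC)$ containing the parametrized Galois group $H$; what largeness of $H$ controls is the $\delta$-dimension of the whole $(\rho,\delta)$-Picard--Vessiot extension, i.e.\ it only rules out \emph{all} entries of $U$ being $\delta$-algebraic. To get a contradiction from the $\delta$-algebraicity of $f$ alone, one must first propagate: assuming (for contradiction) that the column $(f,\rho(f),\dots,\rho^{n-1}(f))^{\top}$ has all entries $\delta$-algebraic over $\tK$, one shows that \emph{every} entry of $U$ is $\delta$-algebraic. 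This is where irreducibility is really used: $H$ is Zariski dense in the irreducible group $\tG$ (Proposition \ref{prop: paradense} and Lemma \ref{lem: irreducible}), hence irreducible, and the $\tC$-span of the solution vectors with all entries $\delta$-algebraic is a nonzero $H$-invariant subspace, hence everything. Only after this can one say the $\delta$-dimension of the extension is zero, equate it with $\dim G$ via Kolchin, and conclude $G=\{I_n\}$, contradicting irreducibility for $n\geq 2$. This propagation argument is entirely absent from your proposal, and it is the content of the paper's Proposition \ref{prop: colonne}.

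Second, your unconditional twist to a system with Galois group $G^{\der}$ is not realizable over $K$ in general. To kill the scalar part you need a rank-one factor $\lambda$ with $\rho(\lambda)=\det(A)^{-1/n}\lambda$ such that the twisted matrix still has coefficients in $K$ (or $\C(x^{1/\ell})$), and for a general $\det(A)\in K^{*}$ the function $\det(A)^{-1/n}$ does not lie in $K$. The paper can only perform the twist \emph{under the contradiction hypothesis}: once all entries of $U$ are known to be $\delta$-algebraic, so is $\det(U)$, and Lemma \ref{lem:rankonehomog} then forces $\det(A)=cx^{\alpha}\rho(g)/g$, whence $B=c^{-1/n}x^{-\alpha/n}A$ is defined over the right field and $\det(G_B)=\{1\}$. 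One then re-iterates to regain connectedness, checks irreducibility is preserved, and uses that a connected primitive irreducible subgroup of $\SL_n$ is semisimple to apply Proposition \ref{propo:AS}. So the order of quantifiers matters: the proof must be run by contradiction from the start, because both the propagation step and the determinant twist depend on the hypothesis that $f$ is $\delta$-algebraic. Your structural decomposition $G=Z(G)^{\circ}\cdot G^{\der}$ is a clean way to see \emph{why} semisimplicity should emerge, but it does not by itself produce the required difference system over $K$.
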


Proposition \ref{prop: irreducible} is obtained as a direct consequence of the following more general result.  
 
\begin{propo}\label{prop: colonne}
Let us consider a linear system of the form \eqref{eq: systg} with $n\geq 2$. 
Let us assume that the difference Galois 
group $G$ for \eqref{eq: systg} over $K$ is connected and irreducible.  
Let $\widetilde{\cQ}$ be a $(\rho,\delta)$-Picard-Vessiot field extension for \eqref{eq: systg} 
over $\widetilde K$, with fundamental matrix of solutions $U$. Then every column of $U$ 
contains at least one element that is 
$\delta$-transcendental over $\widetilde{K}$. 
\end{propo}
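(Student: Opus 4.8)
The plan is to argue by contradiction and to reduce the existence of a single ``bad'' column to the much stronger statement that the \emph{whole} extension $\widetilde{\mathcal Q}$ is $\delta$-algebraic over $\widetilde K$, which I will then exclude by exploiting the reductive structure of $G$ together with the large-group results for semisimple difference Galois groups. So suppose some column $v$ of $U$ has all its entries $\delta$-algebraic over $\widetilde K$. The columns of $U$ are solutions of $\rho(Y)=AY$ and form a $\widetilde C$-basis of the solution space $V\subset\widetilde{\mathcal Q}^n$, where $\widetilde C=\widetilde K^{\rho}$. Let $\mathcal A\subset\widetilde{\mathcal Q}$ be the $\delta$-subfield of elements that are $\delta$-algebraic over $\widetilde K$; since $\widetilde C\subset\widetilde K\subset\mathcal A$, it is a $\widetilde C$-vector space. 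Set $W:=\{\,w\in V \mid \text{all coordinates of } w \text{ lie in } \mathcal A\,\}$. As $c\mapsto Uc$ is $\widetilde C$-linear and $\mathcal A^n$ is a $\widetilde C$-subspace, $W$ is a $\widetilde C$-subspace of $V$, and $W\neq\{0\}$ because $v\in W$.

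Next I would show that $W$ is stable under the parametrized difference Galois group $H:=\Gal^{\delta}(\widetilde{\mathcal Q}/\widetilde K)$. Any $\sigma\in H$ is a $\widetilde K$-$(\rho,\delta)$-automorphism, so it maps solutions to solutions, and applying $\sigma$ to a vanishing $\delta$-polynomial relation sends $\delta$-algebraic elements to $\delta$-algebraic ones; hence $\sigma(W)=W$. In the coordinates given by $U$, the element $\sigma$ acts by $c\mapsto (U^{-1}\sigma(U))\,c$, so $W$ is invariant under the image of $H$ in $\GL_n(\widetilde C)$, which by Proposition \ref{prop: paradense} is Zariski-dense in $\widetilde G=\Gal(\mathcal Q/\widetilde K)$. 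Therefore $W$ is $\widetilde G$-stable. By Lemma \ref{lem: irreducible} the group $\widetilde G$ is irreducible, so the nonzero $\widetilde G$-stable subspace $W$ equals $V$. Consequently every entry of $U$ lies in $\mathcal A$, i.e. $\widetilde{\mathcal Q}=\widetilde K\langle U\rangle$ is $\delta$-algebraic over $\widetilde K$, so $\delta\text{-}\mathrm{trdeg}_{\widetilde K}\widetilde{\mathcal Q}=0$ and $H$ has $\delta$-dimension $0$.

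It remains to contradict this. Since $G$ is connected and acts irreducibly on $\C^n$ with $n\geq 2$, its unipotent radical has a nonzero fixed space which would be $G$-invariant, so $G$ is reductive; and $G$ cannot lie in the scalars (which act reducibly when $n\geq2$), so its derived group $G'=[G,G]$ is a nontrivial semisimple group, with $G$ the almost-direct product of $G'$ and a central torus contained in the scalars. The plan is to twist $\rho(Y)=AY$ by a suitable rank-one system to remove the central (``determinant-type'') part, the $\delta$-algebraicity of that rank-one factor being governed by Lemma \ref{lem:rankonehomog} and Proposition \ref{prop: order1}, thereby reducing to a system whose difference Galois group is the semisimple group $G'$. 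For such a system the results culminating in \cite{AS17} show that the associated parametrized Galois group is the full group $G'(\widetilde C)$, which has positive $\delta$-dimension (equal to $\dim G'$). This forces $\delta\text{-}\mathrm{trdeg}_{\widetilde K}\widetilde{\mathcal Q}>0$, contradicting the previous paragraph, and so no column of $U$ can be entirely $\delta$-algebraic over $\widetilde K$.

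The hard part is exactly this last step. Group-theoretically, a Zariski-dense subgroup of $\widetilde G$ of $\delta$-dimension $0$ is perfectly possible: the $\delta$-constant points $\widetilde G(\C)$ (recall from Remark \ref{rem:diffconstant} that $\C$ is the field of $\delta$-constants of $\widetilde C$) are such a subgroup. Hence the contradiction cannot come from density and irreducibility alone; one genuinely needs the arithmetic input that this ``constant'' case does not occur for an honest difference system over these fields, which is where the reduction to the semisimple part and the deep input of \cite{AS17} — itself resting on Cassidy's classification of differential subgroups of semisimple groups, the parametrized Galois correspondence, and Theorem \ref{lem: linlin} — are indispensable. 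Some care is also required to guarantee, after passing to $\widetilde K$ and iterating $\rho$ if necessary via Corollary \ref{cor:iterationGaloisgroupconnectedPPVfield}, that $\widetilde G$ is still connected and irreducible, and in Case $\mathbf M$ one should work with $\widetilde G=\Gal(\mathcal Q/\widetilde K)$ directly, since Lemma \ref{lem:conservationdifferenceGaloisgroup} identifies it with $G(\widetilde C)$ only when $G$ is semisimple.
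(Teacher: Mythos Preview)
Your proposal is correct and follows essentially the same two-stage strategy as the paper: (i) use irreducibility of $H$ (via Zariski density in $\widetilde G$ and Lemma~\ref{lem: irreducible}) to propagate the $\delta$-algebraicity of one column to all entries of $U$, and then (ii) twist by a rank-one factor to reduce to a semisimple Galois group and invoke the Arreche--Singer result (Proposition~\ref{propo:AS}) to force $\delta$-dimension zero into a contradiction.

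The execution of step (ii) differs slightly. You argue abstractly that a connected irreducible $G\subset\GL_n$ is reductive with nontrivial semisimple derived group $G'$ and central torus contained in the scalars, and then propose twisting to reach a system with Galois group $G'$. The paper instead proceeds concretely through the determinant: since $\det(U)$ is $\delta$-algebraic, Lemma~\ref{lem:rankonehomog} gives $\det(A)=cx^{\alpha}\rho(g)/g$; one then adjoins a solution $\lambda$ of $\rho(y)=c^{-1/n}x^{-\alpha/n}y$ (shown $\delta$-algebraic by the direct computation $\delta(\delta(\lambda)/\lambda)\in\widetilde C$, not via Proposition~\ref{prop: order1}, which concerns solutions in $F_0$), sets $B=c^{-1/n}x^{-\alpha/n}A$ and $U_B=\lambda U$, so that $G_B\subset\SL_n(\C)$. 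After an iteration to make $G_{B_{[r]}}$ connected, one checks irreducibility is preserved (because $A_{[r]}$ and $B_{[r]}$ differ by a scalar in $K^\times$), and then Lemma~\ref{lem6} together with \cite[Proposition~2.3]{SU93} yields semisimplicity. Your route via the reductive decomposition is a clean conceptual shortcut to the same endpoint; the paper's determinant argument has the advantage of producing the explicit twist and of verifying directly that the entries of $U_B$ remain $\delta$-algebraic, which is where the contradiction with Proposition~\ref{propo:AS} actually bites (the $\delta$-dimension-zero statement concerns the twisted system's Picard--Vessiot extension, not $\widetilde{\mathcal Q}$ itself).
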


\begin{proof}[Proof of Proposition \ref{prop: irreducible}] 
We argue by contradiction, assuming that $f$ is $\tr$-algebraic over $K$.  
Then all coordinates of the vector $(f,\dots, \rho^{n-1} (f))^{\top}$ are also $\tr$-algebraic over $K$. 
This follows from the fact that $\rho$ and $\tr$ almost commute, that is 
$\tr \rho  = c \rho \tr $ with $c=1$ in Cases $\bS_0$, $\bS_\infty$, and $\bQ$, 
and $c=p$ in Case $\bM$. We deduce from the construction of $\delta$ with respect to $\tr$ given in Section 
\ref{sec: paramframework} (and the fact that $\log$ is $\delta$-algebraic for Case $\bM$)  
that all coordinates of the vector $(f,\dots, \rho^{n-1} (f))^{\top}$ are also 
$\delta$-algebraic over $\tK$. 

Now, let $A$ denote the companion matrix associated with Equation \eqref{eq: difference}. 
Since $(f,\dots,\rho^{n-1}(f))^{\top}\in F_0^n$ is nonzero, 
Corollary \ref{cor:iterationGaloisgroupconnectedPPVfield} ensures the existence of a positive integer $r$ 
and a  $(\rho^{r},\delta)$-Picard-Vessiot field extension $\widetilde{\cQ}$ for $\rho^{r}(Y)=A_{[r]}Y$ over $\tK$ 
such that  the vector $(f,\dots,\rho^{n-1}(f))^{\top}$ is 
the first column of a fundamental matrix $U$. Furthermore,  Remark \ref{rem:groupofiteratesifconnected}  ensures that 
the Galois group of $\rho^r(Y)=A_{[r]}(Y)$ over $K$ 
is equal to $G$, for the latter is connected.  Thus, 
Proposition~\ref{prop: colonne} applies with $\rho$ replaced by $\rho^r$, providing a contradiction. 
\end{proof}

It remains to prove Proposition \ref{prop: colonne}. As a key argument, we will use the following result due to 
Arreche and Singer \cite[Lemma~5.1]{AS17}.  It says that the parametrized Galois group must be as big as possible when 
the difference Galois group has an identity component that is semisimple. 
Let us recall that a connected algebraic group is said to be  \emph{semisimple} if it is of 
positive dimension and has only trivial abelian normal 
subgroups.

\begin{propo}\label{propo:AS}
Let us consider a linear system $\rho (Y)=AY$, where 
 $A\in \mathrm{GL}_{n}(\C(x))$ in Cases $\mathbf S_0$ and $\mathbf S_{\infty}$, and where 
$A\in \mathrm{GL}_{n}(\C(x^{1/\ell}))$  for some positive integer $\ell$ in Cases $\mathbf Q$ and $\mathbf M$.  
In Cases $\mathbf S_0$, $\mathbf S_{\infty}$, and $\mathbf M$, we let $G\subset \mathrm{GL}_{n}(\widetilde{C})$ 
denote the difference Galois group over $\widetilde{K}$, and $H\subset \mathrm{GL}_{n}(\widetilde{C})$ denote the $(\rho,\delta)$-Galois group over $\widetilde{K}$. 
In Case $\mathbf Q$, we let  $G\subset \mathrm{GL}_{n}(\widetilde{C})$ 
denote the difference Galois group over $\tC(x^{1/\ell})$, and 
$H\subset \mathrm{GL}_{n}(\widetilde{C})$ denote the $(\rho,\delta)$-Galois group over $\tC(x^{1/\ell})$. 
 As in Proposition \ref{prop: paradense}, we  see $H$ as a subgroup of $G$.  If the identity component of $G$ 
 is semisimple, then $H=G$.
\end{propo}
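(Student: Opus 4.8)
The plan is to deduce the statement directly from \cite[Lemma~5.1]{AS17}, so that the only real work is to check that each of our four situations fits the framework in which that lemma is established. By Proposition~\ref{prop: paradense}, after identifying $\Gal^{\delta}(\widetilde{\mathcal Q}/\widetilde K)$ (resp.\ its counterpart over $\tC(x^{1/\ell})$ in Case $\mathbf Q$) with a subgroup $H$ of $G$, the group $H$ is a Zariski-dense differential algebraic subgroup of $G$. Hence $H\cap G^{\circ}$ is a Zariski-dense differential algebraic subgroup of the semisimple group $G^{\circ}$, and since a Zariski-dense subgroup of $G$ meets every connected component, we are done as soon as $H\cap G^{\circ}=G^{\circ}$. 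This is essentially the content of \cite[Lemma~5.1]{AS17}: running over the almost-simple factors of $G^{\circ}$ and applying Cassidy's classification of the Zariski-dense differential algebraic subgroups of almost-simple algebraic groups \cite{Ca89}, a proper $H\cap G^{\circ}$ would, through the parametrized Galois correspondence, force the solutions to satisfy a nontrivial linear differential relation over the base, which together with the linear difference relation they already satisfy contradicts the nonholonomicity statement of Theorem~\ref{lem: linlin} (applied after the descent set up in Section~\ref{sec:goingupdown}). This is the strategy initiated in \cite{HS08} and developed in \cite{DHR18} recalled in the introduction.

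The substantive step is therefore the verification, in each of the cases $\mathbf S_{0}$, $\mathbf S_{\infty}$, $\mathbf Q$, $\mathbf M$, that the standing hypotheses of \cite{AS17} hold. In all four cases $\rho$ and $\delta$ commute on the base field --- for Case $\mathbf M$ this holds on $\tC(x^{1/*})(\log)$ precisely because of the construction $\delta=\log\times x\frac{d}{dx}$, $\rho(\log)=p\log$ of Section~\ref{sec: paramframework}, even though $\rho$ and $x\frac{d}{dx}$ only almost commute on $K$ itself --- the field of $\rho$-constants $\widetilde C$ is $\delta$-closed, $\widetilde K^{\rho}=\widetilde C$, and the field of $\delta$-constants of $\widetilde C$ equals $\C$ by Remark~\ref{rem:diffconstant}. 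For $\mathbf S_{0}$ and $\mathbf S_{\infty}$ the base is $\widetilde C(x)$ with $\delta=\frac{d}{dx}$ and $\rho$ the shift, which is verbatim the shift framework of \cite{AS17}. For $\mathbf Q$ one sets $z=x^{1/\ell}$, so that $\rho(z)=q^{1/\ell}z$ with $q^{1/\ell}$ not a root of unity and $\delta=x\frac{d}{dx}$ proportional to $z\frac{d}{dz}$; over $\tC(z)$ this is the $q$-difference framework of \cite{AS17}, which explains why the proposition is phrased over $\tC(x^{1/\ell})$ and not over the infinite ramified tower $\widetilde K=\tC(x^{1/*})$, passing to which could only shrink $G$ and would destroy the hypothesis that the base be a rational function field in one variable. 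For $\mathbf M$, the base is $\tC(x^{1/*})(\log)$, and one checks that adjoining the $\delta$-algebraic transcendental $\log$ does not disturb the relevant hypotheses, so that \cite[Lemma~5.1]{AS17} applies in the Mahler framework as well.

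The step I expect to be the main obstacle is precisely this translation in Case $\mathbf M$: one must make sure that the Cassidy-type input and the descent mechanism underlying \cite[Lemma~5.1]{AS17} are genuinely available over the ramified, $\log$-augmented base $\tC(x^{1/*})(\log)$, and not merely over the cleaner $\delta$-closed-constant fields $\widetilde C(x)$ appearing in the shift and $q$-difference cases. Once this bookkeeping is carried out, the assumed semisimplicity of $G^{\circ}$ together with \cite[Lemma~5.1]{AS17} forces $H=G$ in all four cases, which is the assertion.
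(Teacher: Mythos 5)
Your proposal is correct and follows essentially the same route as the paper: the paper's proof likewise consists of invoking \cite[Lemma~5.1]{AS17} directly for Cases $\mathbf S_0$ and $\mathbf S_{\infty}$ (noting via Remark~\ref{rem:diffconstant} that the $\delta$-constants of $\tC$ are $\C$), and observing that Cases $\mathbf Q$ and $\mathbf M$ only differ by the slightly more general base $\C(x^{1/\ell})$ (resp.\ the $\log$-augmented base), to which the argument of \cite{AS17} extends. Your additional unpacking of the internal mechanism of that lemma (Cassidy's classification, Zariski density via Proposition~\ref{prop: paradense}, and the nonholonomicity input of Theorem~\ref{lem: linlin}) is consistent with how the paper describes that strategy but is not spelled out in its proof of Proposition~\ref{propo:AS}.
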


\begin{proof} 
Cases $\mathbf S_0$ and $\mathbf S_{\infty}$ are explicitly proved in \cite[Lemma~5.1]{AS17}, since, by Remark \ref{rem:diffconstant}, the field of  differential constants of $\tC$ is $\C$. Cases 
$\mathbf Q$ and $\mathbf M$ are not explicitly proved in  \cite[Lemma~5.1]{AS17} for we consider the slightly more general situation where  
$A\in \mathrm{GL}_{n}(\C(x^{1/\ell}))$. However, the result follows easily from the argument given  in the proof of 
 \cite[Lemma~5.1]{AS17}. 
\end{proof}

We are now ready to prove Proposition  \ref{prop: colonne}.

\begin{proof}[Proof of Proposition \ref{prop: colonne}]  

We let $H$ denote the $(\rho,\delta)$-Galois group of  \eqref{eq: systg} over $\widetilde{K}$, 
that is 
$$
H=\Gal^{\delta}(\widetilde{\mathcal{Q}}/\widetilde K)\subset \mathrm{GL}_{n}(\widetilde{C})\,.
$$ 
We also let $\tG$ denote the Galois group of $\rho(Y)=AY$ over $\tK$.
Let us argue by contradiction, assuming the existence of one  column of $U$ whose coordinates are all 
$\delta$-algebraic over $\widetilde{K}$. 

We first show that all entries of $U$ are 
$\delta$-algebraic over $\widetilde{K}$. 
Since $G$ is irreducible, Lemma \ref{lem: irreducible} implies that $\tG$ is irreducible too.  
By Proposition~\ref{prop: paradense}, $H$  
is Zariski dense in the irreducible group $\tG$. This implies that $H$ is irreducible, since 
otherwise, $H$ would be conjugated to a group formed by block upper triangular matrices, as well as its 
Zariski closure, contradicting the irreducibility of $\tG$.  
Now, let $\mathcal S$ denote the 
 $\widetilde{C}$-vector space generated by all columns of $U$ whose 
 coordinates are all $\delta$-algebraic over $\widetilde{K}$. By assumption, $\mathcal S$ is not reduced to $\{0\}$. 
 Furthermore, $\mathcal S$ is invariant under the action of any 
 $\sigma\in H$.  
 Since $H$ is irreducible, we deduce that $\mathcal{S}$ contains $n$ 
 linearly independent solutions to \eqref{eq: systg}. In other words, all columns of $U$ belong to $\mathcal S$, which  
implies that all entries of $U$ are $\delta$-algebraic over $\widetilde{K}$, as claimed. 
 
We observe that the determinant $\det(U)$ is solution to the equation 
\begin{equation}\label{eq: detA}
\rho(y)=\det(A)y
\end{equation}
and  that 
the difference Galois group of this equation over 
$K$ is the group $\det(G)$. Since all entries of $U$ are $\delta$-algebraic over $\widetilde{K}$, 
we obtain that $\det(U)$ is also $\delta$-algebraic over $\widetilde{K}$. 
We first consider the particular case where ${\det(U)\in K}$, and then we move to the general case. 

\medskip

Let us first assume that $\det(U)$ belongs to $K$. In that case, 
$\det(G)=\{1\}$ and therefore $G\subset \mathrm{SL}_{n}(\C)$. 
We recall that $G$ is assumed to be connected and irreducible. 
According to Lemma \ref{lem6}, 
we have that $G$ is primitive. 
By \cite[Proposition 2.3]{SU93}, we finally obtain that $G$ is semisimple. 
By Lemma~\ref{lem:conservationdifferenceGaloisgroup}, $\tG=G(\widetilde{C})$ is semisimple too.

In Cases $\textbf{S}_{0}$, $\textbf{S}_{\infty}$, and $\textbf{M}$, we infer from Proposition \ref{propo:AS} 
that $H=\tG$. 
Since all entries of $U$ 
are $\delta$-algebraic over $\widetilde{K}$, the $\delta$-dimension of the $(\rho,\delta)$-Picard-Vessiot extension  
is zero\footnote{We refer the reader to \cite[P. 374]{HS08} for a definition of the notion of $\delta$-dimension of a 
$(\rho,\delta)$-Picard-Vessiot ring and of a $(\rho,\delta)$-Galois group.}, and therefore the $\delta$-dimension of 
$H$ is zero by \cite[Proposition~6.26]{HS08}. 
By a result of Kolchin \cite[Chap. IV, Proposition~10]{Ko73}, the dimension of an algebraic group $\mathcal{G}$ over $\tC$ is the same as the $\delta$-dimension of $\mathcal{G}$ viewed as a differential group over $\tC$. This proves that the dimension of $H$ equals the dimension of $G(\tC)$ . Since $\C$ is algebraically closed, the dimension of $G(\tC)$ over $\tC$ equals the dimension of $G$ over $\C$. 
Thus, the above equality $H=\tG=G(\widetilde{C})$ implies 
that the dimension of the algebraic group $G$ over $\C$ is zero.  
We recall that an algebraic group of dimension zero is just a finite group, and that a finite connected group 
has cardinality one. 
Thus, we deduce that $G=\{ \mathrm{I}_{n}\}$, 
where we let $\mathrm{I}_{n}$ denote the identity 
matrix of size $n$. Since by assumption $n\geq 2$, this provides a contradiction with the assumption 
that $G$ is irreducible. This ends the proof in these cases.

In Case  $\bQ$, the system $\rho (Y)=AY$ has coefficients in $\C(x^{1/\ell})$ and the entries of $U$ are 
$\delta$-algebraic over  $\widetilde{C}(x^{1/\ell})$, for some positive integer $\ell$.  
Let $G_{\widetilde{K},\ell}$ denote the difference Galois group of $\rho (Y)=AY$ over $\widetilde{C}(x^{1/\ell})$. 
 Since $\widetilde{K}$ is an algebraic extension of $\widetilde{C}(x^{1/\ell})$,  we deduce from \cite[Theorem~7]{Ro18} 
  that $G_{\widetilde{K}}$ and $G_{\widetilde{K},\ell}$ have the same identity component, which is $G_{\widetilde{K}}$ since the latter is connected. Hence the identity component of the identity of 
  $G_{\widetilde{K},\ell}$ is semisimple. By Proposition \ref{propo:AS}, it follows that
 $H_{\ell}=G_{\widetilde{K},\ell}$, where  $H_{\ell}$ denote the $(\rho,\delta)$-Galois group over 
$\widetilde{C}(x^{1/\ell})$. 
 Furthermore, the entries of $U$ are $\delta$-algebraic over $\widetilde{C}(x^{1/\ell})$ 
which  implies that the $\delta$-dimension of $H_{\ell}$ is zero. 
Since 
$H_{\ell}=G_{\widetilde{K},\ell}$, we get as previously that the algebraic group $G_{\widetilde{K},\ell}$ has dimension zero. 
Since $G_{\widetilde{K}}\subset G_{\widetilde{K},\ell}$, this proves that the dimension of $G_{\widetilde{K}}$ 
is also zero, and we can argue as previously to get a contradiction.   

\medskip

Now, let us consider the general case. We remind that $\det(U)$ is $\delta$-algebraic over $\widetilde{K}$ and $\rho(\det(U))=\det(A)\det(U)$. By Lemma \ref{lem:rankonehomog}, there exist some rational number $\alpha$ and nonzero elements 
$c\in \C$ and $g\in K$ such that $\det(A)=cx^{\alpha}\rho(g)/g$. 
Furthermore, $\alpha=0$ in Cases $\textbf{S}_{0}$, 
$\textbf{S}_{\infty}$, and \textbf{M}.

 Let us consider the rank one linear difference system 
\begin{equation}\label{eq:det}
\rho (y) = c^{-1/n}x^{-\alpha/n} y
\end{equation}

 Since $\tcQ$ is a $(\rho,\delta)$-field with $\tcQ^{\rho}=\widetilde{C}$, there exists 
a $(\rho,\delta)$-Picard-Vessiot extension  $\tcQ_1$ for  \eqref{eq:det} over $\tcQ$.   
Let  $(\lambda)\in\GL_1(\tcQ_1)$ be a fundamental matrix associated with this system. 
Then  
$$
\rho(\lambda)=c^{-1/n}x^{-\alpha/n}\lambda \,
$$
and $\lambda$ is invertible in $\tcQ_1$. 
Using the commutativity of $\delta$ and $\rho$, we obtain that 
$$
\delta \left(\frac{\delta(\lambda)}{\lambda}\right) \in \tcQ_1^\rho= \widetilde{C} \,, 
$$
which shows that $\lambda$ is $\delta$-algebraic 
 over $\widetilde{C}$. In particular, it is    $\delta$-algebraic 
 over $\widetilde{K}$.
 Thus, all entries of the matrix $\lambda U \in \GL_n(\tcQ_1)$ are $\delta$-algebraic over 
 $\widetilde{K}$. 
 
On the other hand, we set 
$$
B=c^{-1/n}x^{-\alpha/n}A  \in \Gl_n(K) \;\mbox{ and } U_B:=\lambda U.
$$ 
Note that $\rho(U_B) =BU_{B}$. Let $G_{B}$ be the difference Galois group of the system ${\rho(Y)=BY}$ over $K$.
 Our choice of $B$ ensures that $\det(B)=\rho(g)/g$.  Thus, the equation 
$$
\rho(y)=\det(B)y
$$ 
has a solution in $K$. It follows that its difference Galois group $\det (G_B)$ is reduced to $\{1\}$. 
Hence, $G_{B}\subset \mathrm{SL}_{n}(\C)$.  
For $k\geq 1$, we let $G_{B_{[k]}}$ denote the difference 
Galois group of $\rho^{k} (Y)= B_{[k]}Y$ over $K$.  
By Proposition \ref{prop: diffGaloisconnexe}, there exists $r\geq 1$ such that $G_{B_{[r]}}$ is connected. 
Furthermore, $\rho^{r}(g)/g=\det(B_{[r]})$ and the difference Galois group of $\rho^{r} (Y)=B_{[r]}Y$ over $K$  
remains included in $\mathrm{SL}_{n}(\C)$.  Let us notice that 
$$
A_{[r]}=\alpha_{r}B_{[r]}$$ for some $\alpha_{r}\in K^{\times}$.

We claim that $G_{B_{[r]}}$ is irreducible. Let us argue by contradiction, assuming that $G_{B_{[r]}}$ is reducible. 
By Lemma \ref{lem: reducible1}, there exists  $T\in \GL_{n}(K)$ such that 
${\rho^{r}(T)B_{[r]}T^{-1}}$ is a block upper triangular matrix, and we deduce that 
$$
\alpha_{r} \rho^{r}(T)B_{[r]}T^{-1}=\rho^{r}(T)A_{[r]}T^{-1}\, .
$$
By Lemma \ref{lem: reducible1}, we obtain that $G_{r}$, the difference Galois group of the system ${\rho^{r} (Y)=A_{[r]}Y}$ over $K$, 
is conjugated to a group of  block  upper triangular matrices, which implies that  $G_{r}$ is reducible.
By   Remark \ref{rem:groupofiteratesifconnected}, $G_{r}=G$, providing 
a contradiction with the assumption that $G$ is irreducible. This proves that $G_{B_{[r]}}$ is irreducible.  

Finally by Lemma \ref{lem6}, the irreducible connected group $G_{B_{[r]}}$ is primitive. 
Again, since $G_{B_{[r]}}\subset \mathrm{SL}_{n}(\C)$, we infer from \cite[Proposition 2.3]{SU93} that $G_{B_{[r]}}$
 is semisimple. Since $\tcQ_1$ s a pseudo 
$\delta$-field  that contains $\tK$ and all coordinates of $U_B$, we have 
$\tK \langle U_B \rangle \subset \tcQ_1$  (see Section \ref{sec:PPV} for the notation). 
Furthermore,  since $\tcQ_1^{\rho} =\tC$, the pseudo $\delta$-field $\tK \langle U_B \rangle$ is a 
$(\rho^r,\delta)$-Picard-Vessiot extension for 
$
\rho^{r}(Y)=B_{[r]}Y
$  
over $\widetilde{K}$. 

Since all entries of $U_B$ are $\delta$-algebraic over 
 $\widetilde{K}$, we can apply Proposition~\ref{propo:AS} and, arguing as in the first part,  we deduce that 
 $G_{B_{[r]}}=\{I_n\}$. Since $n \geq 2$,  we obtain a contradiction with the fact that $G_{B_{[r]}}$ is irreducible. 
\end{proof}

\subsection{The general case}\label{sec:final}

We are now ready to prove Theorem \ref{thm: main}.  

\begin{proof}[Proof of Theorem \ref{thm: main}]
We argue by induction on $n$. More precisely, our induction assumption reads as follows. 
\begin{itemize}
\item[($\mbox{H}_n$)] For all positive integers $k$ and all $f\in F_{0}$ that is solution to a 
$\rho^{k}$-linear equation of order at most $n$  with coefficients in $K$, we have either  
$f$ is $\tr$-transcendental over $K$ or  $f\in K$. 
\end{itemize}
Proposition \ref{prop: order1} implies that $(\mbox{H}_{1})$ hold true. 
Let $n\geq 2$ and let us assume  
$(\mbox{H}_{n-1})$.  
Let $f\in F_{0}$ be solution to a $\rho^{k}$-linear equation of order $n$ with coefficients in $K$.  
Without any loss of generality, we can assume that $f\neq 0$ and $k=1$. 
Considering the companion matrix associated with this equation,   Corollary~\ref{cor:iterationGaloisgroupconnectedPPVfield} ensures the existence of a positive integer $r$ 
such that the following properties hold.  

\begin{itemize}
\item[(a)] The vector $(f,\dots, \rho^{n-1} (f))^{\top}$ is solution to the system $\rho^{r} (Y)=AY$ for some $A\in \GL_{n}(K)$.  

\item[(b)] There exists a $(\rho^{r},\delta)$-Picard-Vessiot field extension $\tcQ$ for $\rho^r (Y)=AY$ over $\tK$, such that the vector 
$(f,\dots, \rho^{n-1} (f))^{\top}$  is the first column of a fundamental matrix $U\in\GL_{n}(\tcQ)$.
  
\item[(c)]  The difference Galois group $G$ of the system $\rho^{r} (Y)=AY$ over $K$ is connected.
\end{itemize}

If $G$ is irreducible, Proposition \ref{prop: irreducible} shows that $f$ is $\tr$-transcendental. 
Hence $(\mbox{H}_{n})$ holds. 
From now on, we  assume that  $G$ is reducible. 
Furthermore, we assume that $f$ is $\tr$-algebraic over $K$. Thus, it remains to prove that $f\in K$.  
Without loss of generalities, we can assume that $r=1$. 
 
 By Lemma \ref{lem: reducible1}, there exists a gauge transformation ${T=(t_{i,j})\in \GL_{n}(K)}$ such that 
 $$\rho (T)AT^{-1}=\begin{pmatrix}
  A_1& A_{1,2} \\ 
 0 & A_2  
 \end{pmatrix}\, ,$$ 
 where $A_{i}\in \GL_{n_i}(K)$, $n_1+n_2=n$, and $n_2<n$.
 Furthermore, let us assume that $n_1$ is minimal with respect to this property.    

Set  
 \begin{equation}\label{eq: gi}
 (g_{i})_{i\leq n}^{\top}:=T(\rho^{i-1}(f))_{i\leq n}^{\top} \in F_{0}^n\,.
 \end{equation} 
 The vector $\left(g_i\right)_{n_1+1\leq i\leq n}^{\top}\in F_{0}^{n_2}$ is solution to the system 
$\rho(Y)=A_2 Y$. Furthermore, since $f$ is $\tr$-algebraic over $K$, the $g_i$ are also $\tr$-algebraic over $K$. 
By ($\mbox{H}_{n_2}$) and Remark \ref{rem:orderofthesystorderoftheequation}, we obtain that 
\begin{equation}\label{eq: gidansK}
\left(g_i\right)_{n_1+1\leq i\leq n}^{\top}\in K^{n_2}\, .
\end{equation}
Let $G_{1}$ denote the difference 
Galois group of the system 
$
\rho(Y)=A_{1}Y
$
over $K$.  We claim that $G_{1}$ is connected and irreducible. 
Indeed, if $G_{1}$ were reducible, 
then by Lemma \ref{lem: reducible1}, there would exist a gauge transformation changing $A_1$ into a block upper 
 triangular matrix, contradicting the minimality of $n_1$.   Furthermore, the Galois group of $\rho(Y)=A_1Y$ over $K$ is a quotient of the connected group $G$ and thereby a connected group.
 
 The main step of the proof consists in showing the following result. 

\medskip

\noindent\textbf{Claim. } One has $n_1=1$.

\begin{proof}[Proof of the claim]   

By assumption, $f$ is $\tr$-algebraic over $K$. Arguing as in the proof of Proposition \ref{prop: irreducible},  
we get that all coordinates of the vector $(f,\dots, \rho^{n-1} (f))$ are $\delta$-algebraic over $\tK$.  Then, 
\eqref{eq: gi} implies that all the 
$g_{i}$'s are also $\delta$-algebraic over $\widetilde{K}$. 
We have to distinguish two cases.

Let us first assume that all the $g_i$'s belong to $\widetilde{K}$. 
By \eqref{eq: gi} and Lemma~\ref{lem:intersectionF0tK}, we obtain that 
$$
\forall i\in\{1,\ldots, n\}, \;\; g_i\in F_{0}\cap \widetilde{K} =K\,
$$
and hence $(f,\dots,\rho^{n-1}(f))^{\top}=T^{-1}(g_{1},\dots,g_{n})^{\top}\in K^{n}$. 
 Thus the vector $(f,\dots,\rho^{n-1}(f))^{\top}$ is fixed by the difference Galois group $G$.  
 By Lemma \ref{lem: reducible1}, there exists $P\in \mathrm{GL}_{n}(K)$ such that 
$$\rho (P)A P^{-1}=\begin{pmatrix}
  b_{1}& {B}_{1,2} \\ 
 0 & B_2  
 \end{pmatrix}\,,$$ 
for some matrices $B_{1,2}$ and $B_2 $ with coefficients in $K$ and a nonzero $b_{1}\in K$. 
By minimality of $n_{1}$, we obtain that $n_{1}=1$, as wanted.

Now, let us assume that at least one the $g_{i}$'s does not belong to 
$\widetilde{K}$. Note that, by assumption, the $f_i$'s all belong to $\tcQ$ and thus the 
$g_{i}$'s all belong to $\tcQ$ too.  We  let $H$ denote the $(\rho,\delta)$-Galois group of 
$\rho (Y)=AY$ over $\widetilde{K}$. 
By the parametrized Galois correspondence \cite[Theorem 6.20]{HS08}, 
we deduce the existence of some ${\sigma \in H}$, such that 
$$\left( \sigma(g_{i})\right)_{  i\leq n}^{\top}\neq\left(g_{i}\right)_{  i\leq n}^{\top} \, ,
$$ while 
\eqref{eq: gidansK} implies that $\sigma(g_{i})=g_{i}$ for every $i$, $n_1+1\leq i\leq n$. 
Set 
$$
u_1:=\left(g_{i}\right)_{  i\leq n_1}^{\top}, \;\;  u_2:=\left(g_{i}\right)_{  n_1+1\leq i\leq n}^{\top}\mbox{ and } v_1:=\left(\sigma(g_{i})\right)_{  i\leq n_1}^{\top} \,.
$$
Hence $w:=u_1-v_1$ is a nonzero vector. 
Since the coordinates of $u_1$ are $\delta$-algebraic over $\widetilde{K}$ and 
$\sigma$ belongs to the $(\rho,\delta)$-Galois group $H$, 
the coordinates of $v_1$ are also $\delta$-algebraic over $\widetilde{K}$. Hence  
the coordinates of $w$ are  $\delta$-algebraic  over $\widetilde{K}$. 
Furthermore, $u_1$ and $v_1$ are both solution to the system 
$$
\rho(Y)=A_{1}Y+A_{1,2}u_2 \,.
$$
It follows that 
$$
\rho(w) =A_{1}w\,.
$$
Since 
we have 
$$
\widetilde C=\widetilde{K}^\rho\subset \widetilde K(w)^\rho\subset\widetilde{\mathcal{Q}}^\rho =\widetilde C\,,
$$ 
Corollary~\ref{cor:iterationGaloisgroupconnectedPPVfield}
 ensures the existence of a positive integer $s$ and a $(\rho^s, \delta)$-Picard-Vessiot field extension 
$\tcQ_1$ for the system $\rho^s (Y)=(A_{1})_{[s]}Y$ 
over $\widetilde{K}$ such that $w$ is the first column of a fundamental matrix. Furthermore, 
the difference Galois group of   $\rho^s (Y)=(A_{1})_{[s]}Y$ is equal to $G_1$ for the latter is connected. 
The coordinates of $w$ being  $\delta$-algebraic  over $\widetilde{K}$ and $G_{1}$ being 
connected and irreducible, Proposition~\ref{prop: colonne} implies that $n_1=1$. 
\end{proof} 
 
Now, let us prove that $(g_1,\ldots,g_n)\in K^n$. Set 
$$
u_{2}:= \left(g_{i}\right)_{ 2\leq i\leq n}^{\top} \,.
$$
By \eqref{eq: gidansK}, we have $u_{2}\in K^{n-1}$ and thus $A_{1,2}u_{2}\in K^{n-1}$. 
Furthermore, $g_1\in F_{0}$ is solution to  the inhomogeneous order one equation   
$$
\rho (Y)=A_1 Y+A_{1,2}u_{2}\,.
$$
Since $g_1$ is $\tr$-algebraic over $K$, Proposition \ref{prop: order1} implies that $g_1\in K$, and 
hence $(g_1,\ldots,g_n)\in K^n$. 
By \eqref{eq: gi}, we obtain that 
$$
(f,\dots,\rho^{n-1}(f))^{\top}=T^{-1}(g_{1},\dots,g_{n})^{\top}\,.
$$
Since the coefficients of $T$ belong to $K$, it follows that $f\in K$, as wanted.  
\end{proof}

\bibliographystyle{alpha}
\bibliography{biblio_ADH}
\end{document}